\DeclareMathOperator{\Hom}{Hom}
\DeclareMathOperator{\ind}{ind}
\DeclareMathOperator{\lnk}{lnk}
\DeclareMathOperator{\st}{st}
\newcommand{\N}{\mathbb{N}}
\newcommand{\Z}{\mathbb{Z}}
\newcommand{\R}{\mathbb{R}}
\newtheorem{theorem}{Theorem}[section]
\newtheorem{lemma}[theorem]{Lemma}
\newtheorem{cor}[theorem]{Corollary}
\newtheorem{prop}[theorem]{Proposition}
\newtheorem{question}[theorem]{Question}
\newtheorem*{maintheorem}{Main Theorem}
\begin{document}

\begin{frontmatter}



\title{Stiefel Manifolds and Coloring the Pentagon}


\author[ou]{James Dover\corref{cor1}\fnref{cu}}
\ead{jdover@cameron.edu}

\author[ou]{Murad \"{O}zayd\i n}
\ead{mozaydin@math.ou.edu}

\cortext[cor1]{Corresponding author}
\fntext[cu]{Present address: Department of Mathematical Sciences, Cameron University, Burch Hall, Room B001, 2800 W. Gore Blvd., Lawton, OK, USA 73505, Phone: 580-581-2488, Fax: 580-581-2616}

\address[ou]{Department of Mathematics, University of Oklahoma, 
601 Elm Avenue, Norman, OK, USA 73019-3103}

\begin{abstract}
We prove Csorba's conjecture that the Lov\'{a}sz complex
$\Hom(C_5,K_n)$ of graph multimorphisms from the $5$--cycle $C_5$
to the complete graph $K_n$ is $\Z/2\Z$--equivariantly
homeomorphic to the Stiefel manifold, $V_{n-1,2}$, the space of
(ordered) orthonormal $2$--frames in $\R^{n-1}$.  The equivariant piecewise-linear topology that we need is developed along the way.
\end{abstract}

\begin{keyword}
Stiefel manifolds \sep PL topology \sep group action \sep graph morphism complexes


\MSC[2010] 57S25 \sep 57Q15 \sep 05E18 \sep 57Q91 \sep 57Q40 \sep 05E45 \sep 57S17 


\end{keyword}

\end{frontmatter}



\section{Introduction}
In his remarkable proof of the Kneser conjecture \cite{L78}, Lov\'{a}sz
gave a lower bound for the chromatic number of a graph using
equivariant algebraic topology.  This was essentially done via a functor
(the edge complex, defined below) from the category of graphs and
graph morphisms to the category $G$--$TOP$ (topological spaces equipped with an action of the group $G$
and $G$--maps, i.e., continuous functions commuting with the given actions).  In the case of the edge complex functor, the group $G$ is of order
two, and the actions are free. The edge complex of the complete
graph $K_n$ is (equivariantly homeomorphic to) the sphere
$S^{n-2}$ with the antipodal action, and the punchline is provided
by the Borsuk-Ulam theorem.

In \cite{C05}, Csorba observed that, for $n = 2$, $n = 3$, and $n = 4$, the $5$--cycle complex (defined below)
of the complete graph $K_n$, denoted  $\Hom(C_5,K_n)$,  is equivariantly homeomorphic to the Stiefel
manifold $V_{n-1,2}$ of (ordered) orthonormal 2-frames in the
Euclidean space $\R^{n-1}$, and he conjectured that this was true
for all $n$.  The non-equivariant
version of Csorba's conjecture was proven by C. Schultz in \cite{S08}, who also
proved that $\Hom(C_5,K_n)$ is equivariantly homotopy equivalent
to $V_{n-1,2}$.  In this note we give a proof of the
equivariant Csorba conjecture: $\Hom(C_5,K_n)$ is equivariantly
homeomorphic to $V_{n-1,2}$ with respect to the actions
described below.  This provides a rather mysterious combinatorial model for the Stiefel manifolds $V_{n-1,2}$ which are fundamental topological spaces.

The edge complex and the ($5$--)cycle complex are special cases of the Lov\'{a}sz graph multimorphism complex, which we proceed to define precisely now.  A \textbf{graph morphism} $f$ is a function from $V_{\Gamma}$ to
$V_{\Lambda}$ (where $V_{\Gamma}$ denotes the vertex set of the
graph $\Gamma$) so that the image of an edge of $\Gamma$ is an
edge of $\Lambda$.  (In particular, a graph morphism from $\Gamma$
to the complete graph $K_n$ is just an admissible coloring of the
vertices of $\Gamma$.)  A \textbf{multimorphism} from $\Gamma$ to $\Lambda$
is a relation $\phi \subseteq V_{\Gamma} \times V_{\Lambda}$ such that:
\begin{enumerate}
\item[(i)] $\phi(v) := \{w \in V_{\Lambda} | (v,w) \in \phi\}$ is non-empty for all $v \in V_{\Gamma}$, and
\item[(ii)] any function $f \subseteq \phi$ is a graph morphism.
\end{enumerate}
In other words, $\phi$ is a multimorphism when there is a function $f \subseteq \phi$ and any function $f \subseteq \phi$
is a graph morphism.

 The Lov\'{a}sz \textbf{multimorphism complex}
$\Hom(\Gamma, \Lambda)$ \cite{BK06} is a bifunctor (contravariant
in the first variable, covariant in the second) assigning a
regular cell complex to the (simple) graphs $\Gamma, \Lambda$. Any
regular cell complex is determined by its face poset
(\cite{BLSWZ99} p200, \cite{LW69} Ch 3 \S 1), and the face poset
of $\Hom(\Gamma, \Lambda)$ is the set of multimorphisms from
$\Gamma$ to $\Lambda$ ordered by inclusion. Geometrically, the
cells of the complex $\Hom(\Gamma, \Lambda)$ are products of simplices; the cell corresponding to the multimorphism $\phi$:
\begin{equation*}
\prod_{v\in V_{\Gamma}} \Delta \phi(v)
\end{equation*}
where $\Delta A$ is the full simplex on the vertex set $A$.
$\Hom(\Gamma, \Lambda)$ is the union of all these cells indexed by
the multimorphisms.  We will identify each cell with the
multimorphism $\phi$ indexing it and refer to it as such.  The
vertices ($0$--cells) of $\Hom(\Gamma, \Lambda)$ are the graph
morphisms from $\Gamma$ to $\Lambda$, the vertices of a given cell
$\phi$ are obtained by choosing any $w$ from $\phi(v)$ for each $v
\in V_{\Gamma}$.

The composition of two multimorphisms is also a multimorphism,
hence $\Hom(\Gamma, \Lambda)$ is (bi-)functorial for
multimorphisms (not just for morphisms, this extended
functoriality was put to very good use in \cite{S09-2}). That
is, a multimorphism $\alpha\colon\thinspace \Gamma^{\prime} \rightarrow \Gamma$
induces a (cellular) map $\Hom(\Gamma, \Lambda) \rightarrow
\Hom(\Gamma^{\prime}, \Lambda)$, and a multimorphism $\beta\colon\thinspace
\Lambda \rightarrow \Lambda^{\prime}$ induces a (cellular) map
$\Hom(\Gamma, \Lambda) \rightarrow \Hom(\Gamma,
\Lambda^{\prime})$.

If $G$ and $H$ are groups acting on the graphs $\Gamma$ and
$\Lambda$ respectively, then there is an action of $G \times H$ on
$V_{\Gamma} \times V_{\Lambda}$ inducing an action on the
relations, i.e., subsets of $V_{\Gamma} \times V_{\Lambda}$, which
restricts to an action on $\Hom(\Gamma, \Lambda)$.  In particular,
a graph $\Gamma$ equipped with an action of a group $G$ defines
the functor $\Hom(\Gamma, -)$ from the category of graphs and
graph (multi-)morphisms to $G$--$TOP$, the equivariant category of
$G$--spaces and $G$--maps.  Lov\'{a}sz's proof of the Kneser conjecture
\cite{L78} essentially employs the \textbf{edge complex} functor $\Hom(K_2,
-)$ with $G$ being the automorphism group of (the edge) $K_2$, the complete graph on $2$ vertices.

If we denote the vertices of $K_2$ with $+$ and $-$, then the
nontrivial element of $G$ is the involution on $\Hom(K_2,
\Lambda)$ switching $\phi(+)$ and $\phi(-)$, the subsets (of
$V_{\Lambda}$) assigned to the two vertices of $K_2$. This action
is free because $\phi(+)$ and $\phi(-)$ are distinct since they
are nonempty and disjoint. In particular, $\Hom(K_2,K_n)$ is a
poset consisting of pairs $(A, B)$ of nonempty disjoint subsets of $\{1, \ldots, n\}$, ordered by component-wise inclusion,
where $A = \phi(+)$ and $B = \phi(-)$.
$\Hom(K_2,K_n)$ can be geometrically realized as the $(n-2)$--sphere
of radius two with respect to the $L^1$ norm (inducing the taxicab
metric) in the hyperplane  orthogonal to the diagonal vector $(1, 1, \ldots, 1)$ in the Euclidean space $\R^n$, where $\phi(+)$ and
$\phi(-)$ are the sets of coordinates with positive and negative
values respectively. Switching $\phi(+)$ and $\phi(-)$ yields the
antipodal action.

Interestingly, long before the definition of $\Hom(\Gamma,
\Lambda)$, the underlying spaces of $\Hom(K_m, K_n)$ figured
prominently in two unrelated applications of equivariant algebraic
topology (neither one involving chromatic numbers or graphs): in
Alon's elegant Necklace Splitting Theorem (with $m$ prime)
\cite{A87} and the solution of the prime power case of the
Topological Tverberg Problem (\cite{O87} and \cite{Ziv98}), which was conjectured by
B\'{a}r\'{a}ny, Shlossman, and Sz\H{u}cs in \cite{BSS81}.

The Lov\'{a}sz conjecture (proven by Babson and Kozlov \cite{BK07},
see also \cite{S09-2}) is about the (odd) \textbf{cycle complex}es
$\Hom(C_m,\Lambda)$ where $C_m$ is the $m$--gon.
$\Hom(C_m,\Lambda)$ also has an involution induced by a reflection
(of the $m$--gon).  When $m$ is odd, any such reflection flips an
edge and hence induces a free action on $\Hom(C_m,\Lambda)$ as
above.  The Lov\'{a}sz conjecture reduces to a computation involving
the equivariant cohomology of $\Hom(C_m,K_n)$.

A graph multimorphism $\phi$ in $\Hom(\Gamma,\Lambda)$ is also determined
by specifying $\phi^{-1}(w) :=  \{v \in V_{\Gamma} | (v,w) \in
\phi \}$ for each $w \in V_{\Lambda}$.  Clearly, for any $\phi$,
each $\phi^{-1}(w)$  is independent (i.e., no two
elements are adjacent in $\Gamma$). The \textbf{independence complex}
$\ind(\Gamma)$ of a graph $\Gamma$ is the simplicial complex with
vertex set $V_{\Gamma}$ and simplices being independent subsets of $V_{\Gamma}$.  Equivalently,
$\ind(\Gamma)$ is the flag complex (the largest simplicial complex
whose $1$--skeleton is the given graph) of the edge complement of
$\Gamma$ (the graph with the same vertices as $\Gamma$ but with the complementary
edge set).  Any flag complex is
the independence complex of the edge complement of its
$1$--skeleton.

When $\Lambda = K_n$, the only condition on $\phi^{-1}(w)$ is
being in $\ind(\Gamma)$, so $\Hom(\Gamma,K_n)$ consists of $\phi$
such that: 
\begin{enumerate}
\item[(i)] $\phi(v)$ is non-empty for all $v \in V_{\Gamma}$, and 
\item[(ii)] $\phi^{-1}(j)$ is independent for $j = 1, \ldots, n$.
\end{enumerate}
We can identify $\Hom(\Gamma,K_n)_{>\phi}$ with the face poset of the
join (over $j = 1, \ldots, n$) of the links of the $\phi^{-1}(j)$
in $\ind(\Gamma)$. Since $\Hom(\Gamma,K_n)_{<\phi}$ is the face
poset of the boundary of a product (over the vertices $v \in
V_{\Gamma}$) of the simplices on $\phi(v)$, $\Hom(\Gamma,K_n)$ is
a (closed) manifold if $\ind(\Gamma)$ is a PL (piecewise-linear)
sphere.  In fact, the converse is also true \cite{C05}.  The
complex $\Hom(C_m,K_n)$ is a manifold only when $m = 5$ since this
is the only time when $\ind(C_m)$ is a sphere: $\ind(C_3)$ is 3
points, $\ind(C_4)$ is the disjoint union of two edges,
$\ind(C_5)$ is a pentagon, and $\ind(C_m)$ with $m > 5$ has maximal
simplices of different dimensions.

On the topological side, the orthogonal group $O_2$ acts on
the Stiefel manifold $V_{n-1,2}  := \{(x, y) \in S^{n-2} \times
S^{n-2} \; | \; x \cdot y = 0\}$ (with the Grassmannian $Gr_{n-1,2}$ of $2$--planes in $(n-1)$--space as the
quotient). The group $O_2$ is the semi-direct product of rotations
$SO_2$ with a subgroup generated by an arbitrary reflection. Two natural involutions on $V_{n-1,2}$ are (i) $(x,y)
\mapsto (x, -y)$ and (ii) $(x,y) \mapsto (y,x)$. Since any two
reflections are conjugate via a rotation, these give equivalent
actions.  An explicit map $V_{n-1,2} \rightarrow V_{n-1,2}$
interchanging the actions (i) and (ii) is $(x,y) \mapsto
\frac{1}{\sqrt{2}}(x+y, x-y)$.  Schultz \cite{S08} used the
action (i) on the Stiefel manifold. Below, we will use the
(equivalent) action (ii).  The corresponding involution on
$\Hom(C_5,K_n)$ is induced by any reflection of the pentagon $C_5$
(they all give equivalent actions).

It is convenient to work with a smaller model:
$\Hom_I(\Gamma,\Lambda)$, in which two multimorphisms are
considered the same if their values differ only on the
independent set $I$ of vertices.  Thus, $\Hom_I(\Gamma,\Lambda)$
is the subcomplex of $\Hom(\Gamma \setminus I,\Lambda)$ consisting
of the cells $\phi$ that can be extended to $\Gamma$.  The
projection from $\Hom(\Gamma,\Lambda)$ to $\Hom_I(\Gamma,\Lambda)$
is a homotopy equivalence \cite{C05} since the fibers are
contractible. It turns out that $\Hom(\Gamma,K_n)$ is homeomorphic (but not via the aforementioned projection)
to $\Hom_I(\Gamma,K_n)$ when $\ind(\Gamma)$ is a PL sphere
\cite{S08}. For $\Hom_I(\Gamma,\Lambda)$ to inherit the $G$--action
(induced from an action on $\Gamma$), the set $I$ needs to be
(setwise) $G$--invariant.  When $G$ is the group of order two
generated by the reflection on $C_5$ switching the vertex $i$ with
the vertex $6 - i$ for $i \in \{1, 2, 3, 4, 5\}$, there are two
$G$--invariant independent subsets: $\{3\}$ and $\{2, 4\}$.  In
\cite{S08} Schultz uses $I = \{2, 4\}$; we use $I = \{3\}$.  We
need $\Hom_I(C_5,K_n)$ and  $\Hom(C_5,K_n)$ to be equivariantly
homeomorphic which we get from an equivariant version of a lemma
of Schultz \cite{S08}.  

While  $\Hom(C_5,K_n)$ (or  $\Hom_{\{3\}}(C_5,K_n)$, which is
equivariantly homeomorphic to it) is the star of this story, the hero is
$\Hom_{\{3\}}(P_4,K_n)$, where $P_4$ is the path with four edges
from $1$ to $5$ (the subgraph of $C_5$ missing the edge $\{1,
5\}$). Our story also features $P_4 \setminus \{3\}$, which is the
disjoint union of two copies of $K_2$, namely the edges
$\{1, 2\}$ and $\{4, 5\}$.  The graph
morphism $P_4 \setminus \{3\} \rightarrow K_2$ sending $1$ and $5$
to $+$ and $2$ and $4$ to $-$ induces the diagonal embedding of
(the sphere) $\Hom(K_2,K_n)$ in $\Hom(P_4 \setminus \{3\},K_n) =
\Hom(K_2,K_n) \times \Hom(K_2,K_n)$.  We also have the restriction
map $\Hom_{\{3\}}(C_5,K_n) \rightarrow \Hom_{\{3\}}(P_4,K_n)$
induced from the inclusion of $P_4$ in $C_5$ and the inclusion of
$\Hom_{\{3\}}(P_4,K_n)$ into  $\Hom(P_4 \setminus \{3\},K_n)$. All
these maps are equivariant with respect to the involutions induced
from the aforementioned $i \mapsto 6 - i$.  (The action on $\Hom(K_2,K_n)$ is trivial since it maps homeomorphically to
the diagonal in $\Hom(P_4 \setminus \{3\},K_n)$, the fixed point
set of the involution.)

Note that $\Hom_{\{3\}}(P_4,K_n)$ is not homeomorphic to
$\Hom(P_4,K_n)$: We will show that $\Hom_{\{3\}}(P_4,K_n)$ is a
$(2n - 4)$--dimensional manifold with boundary while
$\Hom(P_4,K_n)$ is not a manifold, having maximal cells of every
dimension from $2n-4$ to $3n - 6$.  (Recall that $n \geq 3$.)

The face posets of these Lov\'{a}sz complexes above are actually
very easy to describe concretely; all are made up of (ordered) quadruples
of nonempty subsets $A, B, C, D$ of $\{1, \ldots , n\}$ satisfying
further conditions.  For any cell $\phi$, $A, B, C, D$ are
$\phi(1), \phi(2), \phi(5)$, and $\phi(4)$ respectively.  In
$\Hom(P_4 \setminus \{3\}, K_n)$ we have $A \cap B = \emptyset = C
\cap D$.  In $\Hom_{\{3\}}(P_4,K_n)$ we also have that $B \cup D
\neq \{1,\ldots,n\}$, and $\Hom_{\{3\}}(C_5,K_n)$ has the
further restriction that $A \cap C = \emptyset$.  The diagonal
$\Hom(K_2,K_n)$ has $A = C$ and $B = D$ (in addition to $A \cap B
= \emptyset$).  We will denote these cells as
\begin{equation*}
\phi = \left(
   \begin{array}{ccc}
    A & B \\
    C & D \\
   \end{array}\right)
\end{equation*} to remind us of their locations in terms of the vertices of the pentagon $C_5$.
Such an array should not be thought of as a $2 \times 2$ matrix.

On the topological side we have the involution on $S^{n-2} \times
S^{n-2}$ switching the two coordinates, with the diagonal
$S^{n-2}$ as the fixed set.  An equivariant regular neighborhood
of the diagonal is $N := \{(x, y) \in S^{n-2} \times S^{n-2} \; |
\; x \cdot y \geq 0\}$, which is a manifold with boundary
$V_{n-1,2} = \{(x, y) \in S^{n-2} \times S^{n-2} \; | \; x \cdot y
= 0\}$.   We prove in Section 5 that $\Hom_{\{3\}}(P_4,K_n)$ is a
manifold with boundary $\Hom_{\{3\}}(C_5,K_n)$ and also an
equivariant regular neighborhood of the diagonal in $\Hom(P_4
\setminus \{3\},K_n) \cong \Hom(K_2,K_n) \times \Hom(K_2,K_n)$. From the equivariant PL theory of
collapsing, shelling, and regular neighborhoods developed in
Section 4, our main result follows:

\begin{maintheorem}
The regular cell complex $\Hom_{\{3\}}(P_4,K_n)$ is a PL manifold with boundary
$\Hom_{\{3\}}(C_5,K_n)$ and is equivariantly homeomorphic (with respect to the involution described above) to $N :=
\{(x,y) \in S^{n-2} \times S^{n-2} \; | \; x \cdot y \geq 0 \}$,
where the involution on $N$ interchanges $(x,y)$ with $(y,x)$.
The Stiefel manifold $V_{n-1,2} = \partial N$ is therefore equivariantly homeomorphic to $\Hom_{\{3\}}(C_5,K_n)$, which is equivariantly homeomorphic to $\Hom(C_5,K_n)$.
\end{maintheorem}

The structure of the rest of the paper is as follows: We establish
notation and give basic definitions and prove that $\Hom(C_5,K_n)$
and $\Hom_{\{3\}}(C_5,K_n)$ are equivariantly homeomorphic in
Section 2.  The small amount of discrete Morse theory that we use
is in Section 3.  Section 4 is devoted to the stating the equivariant versions
of facts we need from piecewise-linear topology given in \cite{RS82}.  Finally,
in Section 5 we specialize to our setup and prove the equivariant
Csorba conjecture.

\section{Notation and Basics}
All simplicial complexes and posets we consider are finite. An
(abstract) \textbf{simplicial complex} $K$ on a vertex set $V$ is
a collection of (finite) subsets of $V$ such that if $\sigma
\subseteq \tau \in K$, then $\sigma \in K$.  The dimension of the
simplex $\sigma$ is one less than its cardinality, and the
\textbf{$n$--skeleton} of the simplicial complex $K$ (denoted
$K^n$) is the sub-simplicial complex made up of all simplices of
$K$ with dimension at most $n$. However, we will abuse the
notation and use $K^0$ to also mean $\bigcup \, \{\sigma \, | \,
\sigma \in K\}$. The nonempty simplices $\sigma \in K$ are also
called the faces of $K$.  The \textbf{geometric realization} of
$K$ is the topological space
\begin{equation*}
|K| := \{\; \sum_{v \in \sigma} t_v \delta_v \in \R^{V} \; | \; \sum_{v \in \sigma} t_v = 1, \, t_v > 0,\, \sigma \in K \setminus \{ \emptyset\}\}
\end{equation*}
where $\delta_v$ is the standard basis vector of $\R^V$
corresponding to $v \in V$.  The underlying topological space of a face $\sigma$ of $K$ is
\begin{equation*}
|\sigma| := \{\; \sum_{v \in \sigma} t_v \delta_v \in \R^{V} \; |
\; \sum_{v \in \sigma} t_v = 1, \, t_v \geq 0\}.
\end{equation*}
Note that we are using vertical bars to denote both the
cardinality of a finite set and the geometric realization of a
simplex or a simplicial complex; which one is meant should be
clear from the context.

If $\sigma$ is a simplex in a simplicial complex $K$, its \textbf{link} is defined $\lnk_K(\sigma) : = \{\tau \in K \, | \, \sigma \cap \tau = \emptyset, \sigma \cup \tau \in K\}$.  If $K$ and $L$ are two simplicial complexes, we define their \textbf{join} $K * L : = \{\sigma \cup \tau \, | \, \sigma \in K, \tau \in L\}$.  

A subcomplex $L$ of a simplicial complex $K$ is called \textbf{full} if it satisfies the property that if $\sigma \in K$ and $\sigma \subseteq L^0$, then $\sigma \in L$.

A \textbf{regular cell structure} on a (compact Hausdorff)
topological space $X$ is a (finite) collection $\{ c\}$ of subspaces
(called cells or faces), each homeomorphic to a (closed) disk of
dimension $d$ for some $d$, such that (1) $X$ is the disjoint
union of the relative interiors of its cells (called the open
cells and denoted by int $c$), and (2) the boundary of each cell is a union of (lower
dimensional) cells. A topological space with a regular cell
structure is a \textbf{regular cell complex}.  For any simplicial complex
$K$, the collection $\{|\sigma|\}_{\sigma \in K \setminus \{\emptyset \}}$ gives a regular
cell structure on $|K|$.

For any poset $P$, its \textbf{order complex} $\Delta P$ is the simplicial complex whose simplices are chains $a_0 < a_1 < \ldots < a_d$, with $a_i \in P$.  The faces of a regular cell complex under inclusion form a poset whose order complex is its barycentric subdivision (\cite{BLSWZ99} p200, \cite{LW69} Ch 3 \S 1).  Note that this notation is consistent with $\Delta A$ being the full simplex on a vertex set $A$ if the set $A$ is endowed with any total order.  Also note that an abstract simplicial complex $K$ is itself a poset and includes the empty simplex (unlike its face poset), so the order complex $\Delta K$ is the cone of the barycentric subdivision of $K$.

A \textbf{$G$--simplicial complex}, where $G$ is a (finite) group, is a simplicial complex $K$ equipped with a permutation action of $G$ on the vertex set $V$ so that the induced action on the subsets of $V$ sends simplices to simplices.  Similarly, in a $G$--cell complex, the group action permutes the cells.  For any cell (or simplex) $c$, $G_c := \{g \in G \, | \, gc = c\}$ is called its \textbf{stabilizer}. It will be convenient for us to define a \textbf{$G$--regular cell complex} to be a topological space with a $G$--action and a regular cell structure with the group $G$ permuting the cells so that every closed cell is $G_c$--homeomorphic to a cone on its boundary with the apex fixed by $G_c$.  The geometric realization of a $G$--simplicial complex $K$ is a $G$--regular cell complex with cells $\{|\sigma|\}_{\sigma \in K \setminus \{\emptyset \}}$ since the stabilizer of a simplex always fixes its centroid.  Similarly, if the cells of a regular cell complex are convex with $G$ acting affinely on each one, the complex is $G$--regular. If the stabilizer of each cell (or simplex) fixes the cell pointwise, the complex is called \textbf{admissible}.  If $G$ acts on a poset $P$ (preserving the order), then $\Delta P$ is an admissible $G$--simplicial complex.

Just as in the non-equivariant case (\cite{BLSWZ99} p200,
\cite{LW69} Ch 3 \S 1), the face poset of a $G$--regular cell
complex determines its $G$--homeomorphism type:

\begin{lemma}\label{Gcellcx}
If X is a G-regular cell complex with face poset F, then $X$ is $G$--homeomorphic to $|\Delta F|$.
\end{lemma}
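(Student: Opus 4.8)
The plan is to mirror the classical argument (e.g. \cite{LW69} Ch 3 \S 1, \cite{BLSWZ99} p200) that a regular cell complex is homeomorphic to the geometric realization of its face poset, and to check at each step that every homeomorphism in sight can be chosen equivariantly. Write $F$ for the face poset of $X$, ordered by inclusion of closed cells, with the induced $G$-action (order-preserving since $G$ permutes cells and $c \subseteq c'$ iff $gc \subseteq gc'$). We induct on the dimension (and, within a dimension, on the number of cells) of $X$. Let $X^{(d)}$ denote the union of the closed cells of dimension $\leq d$, a $G$-invariant subcomplex; by induction assume we have a $G$-homeomorphism $h_{d-1}\colon X^{(d-1)} \to |\Delta F_{d-1}|$, where $F_{d-1}$ is the corresponding sub-poset. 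It suffices to extend $h_{d-1}$ over each $G$-orbit of $d$-cells one orbit at a time.

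The key step is the attachment of a single $G$-orbit of $d$-cells. Fix such an orbit, pick a representative $c$ with stabilizer $G_c$, so the orbit is $G \times_{G_c} c \cong \coprod_{gG_c} gc$. By the definition of $G$-regular cell complex, $c$ is $G_c$-homeomorphic to a cone $G_c \ast \partial c$ with apex fixed by $G_c$; since $\partial c \subseteq X^{(d-1)}$ and $h_{d-1}$ is already a $G$-homeomorphism, $h_{d-1}|_{\partial c}$ is a $G_c$-homeomorphism onto $|\Delta (F_{d-1})_{<c}| \ast$ (the boundary of the star of the vertex $c$ in $|\Delta F|$), and coning off with the new barycenter-vertex $c$ — which $\Delta F$ provides, with $G_c$ fixing it — extends this to a $G_c$-homeomorphism of $c$ onto the closed star $|\Delta F_{\leq c}|$. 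Transporting this homeomorphism around the orbit by the $G$-action (i.e.\ defining it on $gc$ to be $g \circ h_c \circ g^{-1}$) is well-defined precisely because $h_c$ commutes with $G_c$, and it agrees with $h_{d-1}$ on the overlaps $gc \cap g'c \subseteq X^{(d-1)}$ by construction. Gluing over all cosets and all orbits of $d$-cells gives the extension $h_d$, completing the inductive step.

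The main obstacle is making the cone structure genuinely equivariant and coherent across the orbit. Non-equivariantly, one is free to choose, for each closed cell, a homeomorphism to a cone on its boundary with any prescribed behavior; equivariantly one must (i) choose it $G_c$-equivariantly on the representative cell $c$ — this is exactly what the hypothesis "$G$-regular cell complex" guarantees, via the apex being $G_c$-fixed — and (ii) ensure that transporting by group elements yields a consistent, single-valued map on the whole orbit, for which the compatibility of $h_c$ with $G_c$ (so that $gc = g'c$, i.e.\ $g^{-1}g' \in G_c$, forces $g h_c g^{-1} = g' h_c g'^{-1}$ on $gc = g'c$) is the crucial point. A secondary bookkeeping issue is verifying that $\Delta F$ really is the barycentric subdivision of $X$ with the expected $G$-action: the vertices of $\Delta F$ are the cells of $X$, $G$ acts on them as it does on cells, the simplices are chains, and the realization places the vertex $c$ at (a point that maps to) the barycenter of the cell $c$; admissibility is automatic since $\Delta F$ is the order complex of a $G$-poset, which is harmless because we only need $G_c$ to fix the new apex, not to fix $c$ pointwise inside $X$. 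Once these are in place the induction closes and $X \cong_G |\Delta F|$.
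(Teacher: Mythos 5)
Correct, and essentially the same argument as the paper's: in both cases the inductive step handles one $G$-orbit of maximal cells at a time, uses the $G_c$-fixed cone apex guaranteed by the $G$-regular hypothesis to extend the boundary homeomorphism across a representative cell $c$, and then transports this around the orbit by $g\circ h_c\circ g^{-1}$, which is exactly well-defined because $h_c$ is $G_c$-equivariant. The only cosmetic difference is that the paper inducts on the total number of cells by deleting $\bigcup_g \text{int}\,gc$ for a single maximal orbit, whereas you build up skeleton by skeleton; the mechanism and the verification of coherence on overlaps are identical.
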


The proof below is very similar to the non-equivariant case and is
mainly given for completeness.

\begin{proof}
The proof is by induction on the number of cells in $X$.  There is nothing to prove if X consists of a single $0$--cell.  Now, choose a maximal cell $c$ and define $Y := X \setminus \bigcup_{g \in G} \text{int } gc$ with the induced $G$--cell structure.  By the induction hypothesis, $Y \approx_G |\Delta (F \setminus Gc)|$.  Also, $\partial c \approx_{G_c} |\Delta F_{< c}|$.  Then we take the cone of $\partial c$ with its apex being a point $x \in \text{int } c$ fixed by $G_c$.  Extending this coning equivariantly to $G \partial c$ gives the homeomorphism from $|\Delta F|$ to $X$.
\end{proof}

We are specifically interested in the $G$--equivariant homeomorphism type of some Lov\'{a}sz multimorphism complexes $\Hom(\Gamma,K_n)$, for a graph $\Gamma$ with a group of symmetries $G$.  The following equivariant version of Lemma 3.5 in \cite{S08} enables us, in some cases, to work with the smaller and more convenient $G$--regular cell complexes $\Hom_I(\Gamma, K_n)$, where $I$ is a $G$--invariant independent subset of vertices of $\Gamma$.

\begin{lemma}\label{schultz2}
Let $\Gamma$ be a graph with a G-action, n $\geq$ 1, and $I$ a
$G$--invariant, independent subset of the vertex set $V_{\Gamma}$.  For all $v \in V_{\Gamma}$,
define

\begin{equation*}
A_v := \{J \in \ind(\Gamma) \, | \, v \in J \}, \quad B_v := \{ J \setminus I \, | \, J \in A_v \}
\end{equation*}

If there is a $G$--homeomorphism $h\colon\thinspace |\Delta \ind(\Gamma)|
\rightarrow |\Delta \ind(\Gamma \setminus I)|$ such that
$h(|\Delta A_v|) = |\Delta B_v|$ for all $v \in V_{\Gamma}$, then
$\Hom(\Gamma,K_n)$ is $G$--homeomorphic to $\Hom_I(\Gamma,K_n)$.
\end{lemma}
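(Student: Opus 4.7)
My plan is to adapt Schultz's non-equivariant argument (Lemma~3.5 of \cite{S08}) while tracking $G$-equivariance throughout. The bridge is Lemma~\ref{Gcellcx}: to produce the required $G$-homeomorphism between the two $G$-regular cell complexes it suffices to produce a $G$-homeomorphism between the geometric realizations of their face posets.

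First, I would describe the two face posets concretely. A multimorphism in either Hom-complex is equivalently specified by its tuple of color-preimages $(\phi^{-1}(1),\ldots,\phi^{-1}(n))$, with each $\phi^{-1}(j)$ an element of $\ind(\Gamma)\cup\{\emptyset\}$ (respectively, $\ind(\Gamma\setminus I)\cup\{\emptyset\}$). The face poset of $\Hom(\Gamma,K_n)$ picks out those tuples satisfying the covering condition $\bigcup_j\phi^{-1}(j)=V_\Gamma$, equivalently, for every $v\in V_\Gamma$ some $\phi^{-1}(j)$ lies in $A_v$. The face poset of $\Hom_I(\Gamma,K_n)$ consists of tuples satisfying the covering condition at each $v\notin I$ together with the extendability condition at each $v\in I$; unpacking the definition of $B_v$ shows that, for both $v\in I$ and $v\notin I$, the combined requirement is that for every $v\in V_\Gamma$ some $\phi^{-1}(j)$ lies in $B_v$.

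Next, I would use the hypothesis color-by-color to construct the equivariant homeomorphism. Applying $h$ separately in each of the $n$ color coordinates gives a candidate map at the level of realizations; the key hypothesis $h(|\Delta A_v|)=|\Delta B_v|$ is precisely what makes the subspace defined by the covering condition (``some $\phi^{-1}(j)\in A_v$'') at each $v$ match up with the subspace defined by the covering/extendability condition (``some $\phi^{-1}(j)\in B_v$'') at the same $v$. Since $G$ acts on the vertices of $\Gamma$ but fixes the color set $\{1,\ldots,n\}$, this $n$-fold application of $h$ commutes with the $G$-action thanks to the $G$-equivariance of $h$.

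The main obstacle I anticipate is the bookkeeping of cell structures. Cells of $\Hom(\Gamma,K_n)$ are products of simplices indexed by vertices of $\Gamma$, whereas the join/color picture is indexed by colors; reconciling the two requires working at the level of barycentric subdivisions (where Lemma~\ref{Gcellcx} applies) and verifying that the color-by-color application of $h$ is compatible with the combinatorial structure of chains of multimorphisms, restricting correctly to the upward-closed subposets identified above. Once this is in place, the $G$-equivariance of the resulting homeomorphism is essentially automatic from the hypothesis on $h$.
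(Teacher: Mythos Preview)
Your proposal is correct and follows essentially the same approach as the paper: embed the face posets into $\prod_{i=1}^n \ind(\Gamma)$ (resp.\ $\prod_{i=1}^n \ind(\Gamma\setminus I)$) via $\phi\mapsto(\phi^{-1}(i))_i$, identify the images by the per-vertex conditions involving $A_v$ and $B_v$, pass to order complexes, and apply $h$ coordinatewise. The only technical point the paper makes explicit that you leave under ``bookkeeping'' is the closure property allowing $\Delta$ to commute with the unions: each $A_v$ is upward-closed in $\ind(\Gamma)$, each $B_v$ with $v\notin I$ is upward-closed in $\ind(\Gamma\setminus I)$, and each $B_v$ with $v\in I$ is downward-closed---so be careful that your phrase ``upward-closed subposets'' does not apply uniformly to the $B_v$'s.
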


\begin{proof}Following \cite{S08}, we
consider the equivariant poset embedding
\begin{equation*}
f: \Hom(\Gamma,K_n) \rightarrow \prod_{i = 1}^n \;
\text{ind}(\Gamma) = \ind(\Gamma)^{\{1, \ldots, n\}}
\end{equation*}
given by $\phi \mapsto (\phi^{-1}(i))_i$.  Then $\Hom(\Gamma,K_n) \approx_G
|\Delta \text{im } f|$. The poset $\ind(\Gamma)^{\{1, \ldots, n\}}$ can naturally be identified with those
relations $\phi \subseteq V_{\Gamma} \times \{1, \ldots, n\}$ that are multimorphisms from the induced subgraph on the vertices with  $\phi(v)$ nonempty to the complete graph $K_n$.  The
additional condition that no $\phi(v)$ can be empty yields the
following description:
\begin{equation*}
\text{im } f = \bigcap_{v \in V_{\Gamma}} \bigcup_{j = 1}^n
\prod_{i =1}^n \left\{\begin{array}{lr}
A_v, & i = j \\
\text{ind}(\Gamma), & i \neq j
\end{array}\right.
\end{equation*}
All the $A_v$ satisfy the condition that if $x \in A_v$ and $x
\leq y$, then $y \in A_v$.  Therefore, taking the order complex
commutes with unions, and we obtain that
\begin{equation*}
\Hom(\Gamma,K_n) \approx_G \bigcap_{v \in V_{\Gamma}} \bigcup_{j =1}^n \prod_{i = 1}^n \left\{\begin{array}{lr}
|\Delta A_v|, & i = j \\
|\Delta \ind(\Gamma)|, & i \neq j
\end{array}\right.
\end{equation*}
We use a similar argument for $\Hom_I(\Gamma,K_n)$.  The image of its
embedding in $\ind(\Gamma \setminus I)^{\{1,\ldots,n\}}$ has the additional
condition that for each vertex in $I$, there is some element of
$\{1,\ldots,n\}$ that is not related to any of its neighbors in $\Gamma$.  We
have that, for $v \notin I$, $B_v$ satisfies the same condition as
$A_v$ above.  For $v \in I$, $B_v$ also satisfies the condition
that if $x \in B_v$ and $y \leq x$, then $y \in B_v$.  Hence,
\begin{equation*}
\Hom_I(\Gamma,K_n) \approx_G \bigcap_{v \in V_{\Gamma}} \bigcup_{j = 1}^n \prod_{i =1}^n \left\{\begin{array}{lr}
|\Delta B_v|, & i = j \\
|\Delta \ind(\Gamma \setminus I)|, & i \neq j
\end{array}\right.
\end{equation*}
Thus, using the $G$--homeomorphism from the hypothesis on each
coordinate in the product, we obtain that $\Hom(\Gamma,K_n)
\approx_G \Hom_I(\Gamma,K_n)$.
\end{proof}

In our case, we choose $I = \{3\}$ as our independent set in $C_5$.
The existence of a $G=\{\pm 1\}$--homeomorphism $h\colon\thinspace |\Delta \ind(C_5)|
\rightarrow |\Delta \ind(C_5 \setminus \{3\})|$ satisfying
the necessary conditions is easy to see from Figure \ref{fig1}. Thus, we have $\Hom(C_5,K_n)$ is equivariantly homeomorphic to $
\Hom_{\{3\}}(C_5,K_n)$.

\begin{figure}
\includegraphics[scale=.4]{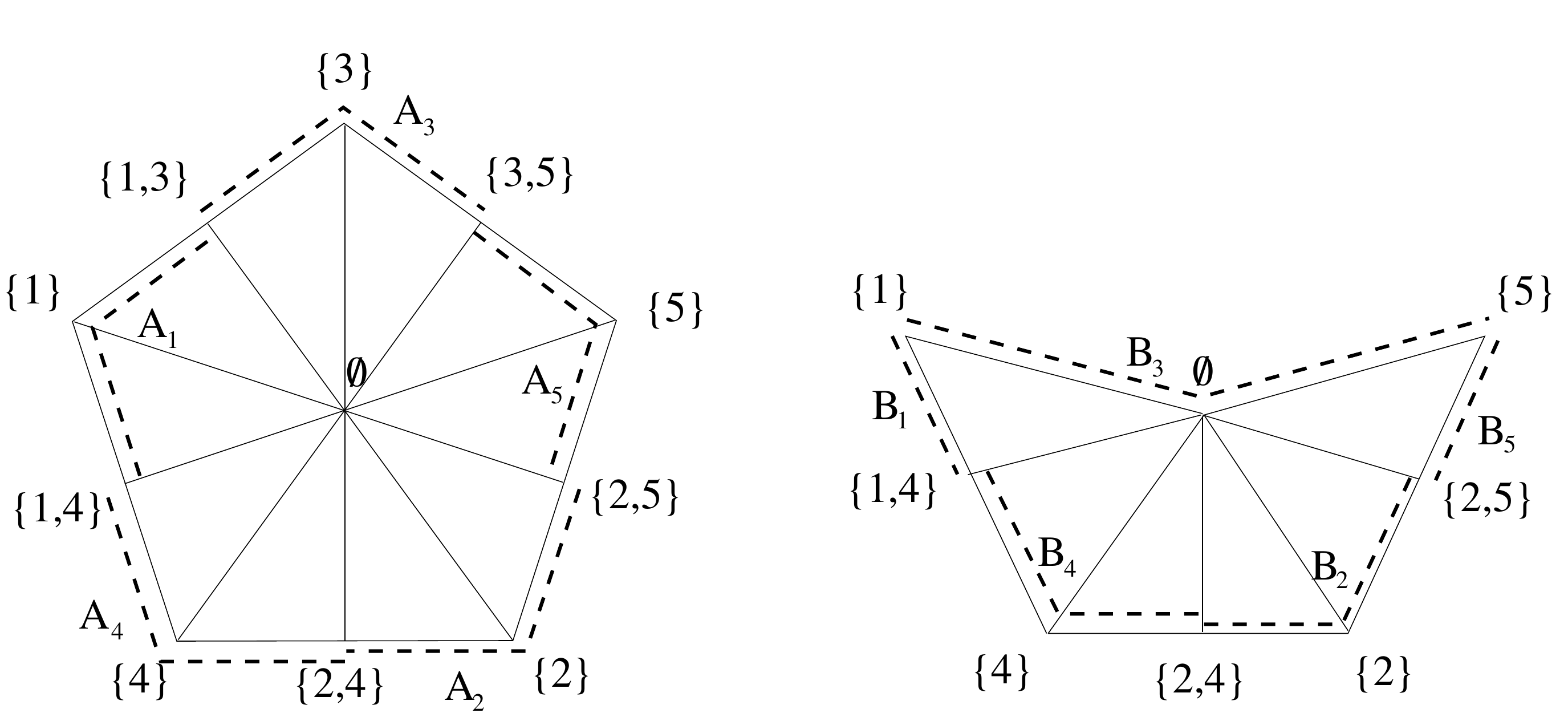}
\caption{$|\Delta \ind(C_5)|$ and $|\Delta \ind(C_5
\setminus \{3\})|$ \label{fig1}}
\end{figure}

\section{Discrete Morse Theory}
We will use Robin Forman's Discrete Morse Theory to find
collapsings of simplicial complexes.  More thorough discussions of
the subject can be found in \cite{F98} and \cite{K07}.

Let $K$ be a finite (abstract) simplicial complex.  We use the notation
$\sigma \lessdot \tau$ if $\sigma < \tau$ and dim $\sigma = \text{dim}
\, \tau - 1$.  By a vector, we mean a pair $(\sigma \lessdot
\tau)$, where $\tau$ is thought of as the head of the vector and
$\sigma$ the tail.  A \textbf{discrete vector field} on $K$ is defined to be a
collection of vectors $V = \{\, (\sigma_i \lessdot \tau_i) \, |
\, i \in I \}$ such that each simplex $\rho \in K$ belongs to
at most one element of $V$, either as a head or a tail of a
vector.

Given a discrete vector field $V$ on $K$, we have the notion of a
\textbf{path}, which is a sequence of simplices in K of the form:
\begin{equation*}
\sigma_0 \lessdot \tau_0 \gtrdot \sigma_1 \lessdot \tau_1 \gtrdot
\ldots \gtrdot \sigma_{s-1} \lessdot \tau_{s-1} \gtrdot \sigma_{s}
\end{equation*}
where $\forall \; i: \; 0\leq i < s$, $(\sigma_i \lessdot \tau_i)
\in V$ and $\sigma_i \neq \sigma_{i+1}$.  We say a path as above has length $s$.
By a \textbf{cycle} we mean a path as above with $\sigma_{s} =
\sigma_0$.

A \textbf{Morse matching} (or a \textbf{discrete gradient field}) is
a discrete vector field $V$ with no cycles.  The simplices which are
unpaired in $V$ are called \textbf{critical}.

An equivalent concept to a Morse matching is a height function on
$K$.  A \textbf{height} (or \textbf{Morse})  \textbf{function} is a map $h\colon\thinspace K
\rightarrow \R$ satisfying $\forall \; \sigma \in K$,
\begin{equation*}
|\{\rho \lessdot \sigma \, | \, h(\rho) \geq h(\sigma)\}
\cup \{\tau \gtrdot \sigma \, | \, h(\tau) \leq h(\sigma)\}|
\leq 1
\end{equation*}
Given a height function $h$, the corresponding Morse matching is the collection of pairs $(\sigma \lessdot \tau)$ for which $h(\sigma) \geq h(\tau)$.  Conversely, it is not difficult to construct a height function inducing a given Morse matching (\cite{F98}, \cite{K07}).  This height function is clearly not unique.  In fact, we may adjust it to be one-to-one and to take values in $\N$ (without changing the Morse matching).

The following is the key lemma from discrete Morse theory we will use in this paper.  A more general version for cell complexes can be found in \cite{K07}, Theorem 11.13.

\begin{lemma}\label{morse2}
Let K be a finite simplicial complex with a Morse matching
whose critical simplices form a subcomplex L.  Then K collapses
simplicially to L.
\end{lemma}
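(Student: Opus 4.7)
The plan is to prove this by induction on the number of matched pairs in the Morse matching, exhibiting at each stage an elementary simplicial collapse and iterating until only $L$ remains.

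We fix a height function $h \colon K \to \N$ inducing the given Morse matching, adjusted so that $h$ is injective, and among all matched pairs we select the pair $(\sigma, \tau)$ that maximizes $h(\sigma)$, the height of the tail. The central claim is that $\sigma$ is a free face of $\tau$ in $K$, so that $K$ elementarily collapses onto $K \setminus \{\sigma, \tau\}$.

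The verification has two steps. First, $\tau$ is the unique immediate coface of $\sigma$ in $K$: any other $\tau' \gtrdot \sigma$ cannot be critical, since $\tau' \in L$ would force its face $\sigma \in L$, contradicting $\sigma$ being matched; and the height condition at $\sigma$ forces $h(\tau') > h(\sigma)$, so if $\tau'$ is matched as a head of a vector $(\rho, \tau')$ then the tail $\rho$ satisfies $h(\rho) \geq h(\tau') > h(\sigma)$, while if $\tau'$ is itself a tail then $h(\tau') > h(\sigma)$ directly, both contradicting the maximality of $h(\sigma)$ among matched tails. Second, $\tau$ must in fact be a maximal simplex of $K$: writing $\tau = \sigma \cup \{v\}$, any coface $\tau^* \gtrdot \tau$ has the form $\sigma \cup \{v, w\}$ for some $w \notin \tau$, so by the face-closure of $K$ the simplex $\sigma \cup \{w\}$ lies in $K$ and is a second immediate coface of $\sigma$ distinct from $\tau$, contradicting the first step. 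Combining the two steps, $\sigma$ is a free face of $\tau$, and the desired elementary collapse exists.

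Having performed this elementary collapse to obtain $K' := K \setminus \{\sigma, \tau\}$, we observe that $K'$ inherits a Morse matching (obtained from the original by restricting $h$ to $K'$ and dropping the single pair $(\sigma, \tau)$, with the Morse condition persisting since removing simplices can only decrease the number of exceptions at any remaining simplex), the critical set remains $L$, and $L$ is still a subcomplex of $K'$. The inductive hypothesis then gives that $K'$ collapses simplicially to $L$, and concatenating with the elementary collapse above yields the desired collapse of $K$ to $L$. The main subtlety is the second step of the verification: a naive choice of pair need not be free, and it is precisely the combination of the maximality of $h(\sigma)$ among tails (used in step one) with the face-closure of the ambient simplicial complex that forces $\tau$ to be maximal in $K$.
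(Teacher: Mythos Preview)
Your proof is correct and follows essentially the same strategy as the paper's: use an injective height function to peel off matched pairs one at a time as elementary collapses. The only difference is organizational---the paper builds level sets $K(m)$ and removes, at each stage, the pair whose \emph{head} has the current maximal height, whereas you select the pair whose \emph{tail} has maximal height; your two-step verification that this yields a free face is a clean alternative to the level-set bookkeeping.
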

\begin{proof} Let $h\colon\thinspace K \rightarrow \N$ be a one-to-one height
function corresponding to the given Morse matching.  Define a new
height function $\tilde{h}\colon\thinspace K\rightarrow \N$ as follows:
For $\sigma \in L$, set $\tilde{h}(\sigma) = \text{dim} \,(\sigma)$. For
$\sigma \notin L$, set $\tilde{h}(\sigma) = h(\sigma) + \text{dim}(L)$.
Under this new height function, all of the simplices in $L$ remain
critical, and the relative heights of all the simplices outside of
$L$ are unchanged, preserving their pairings. Therefore,
$\tilde{h}$ corresponds to the same Morse matching as $h$, and
$\tilde{h}$ is one-to-one on $K \setminus L$.

Now, for $m \in \N$ define
\begin{equation*}
K(m):= \{\sigma \in K\; | \; \exists \; \tau \geq \sigma \text{
such that } \tilde{h}(\tau) \leq m\}
\end{equation*}
Note that $K(\text{dim}(L)) = L$. For $m \geq \text{dim}(L)$, either $K(m + 1) =
K(m)$ or $K(m + 1) = K(m) \cup \{\rho, \tau\}$ where $\rho
\lessdot \tau$ and $\tilde{h}(\tau) = m + 1 < \tilde{h}(\rho)$. In
the latter case, $K(m + 1)$ collapses to $K(m)$ along the free
face $\rho$. Since $K = K(\text{max}\{\tilde{h}(\sigma) \, | \,
\sigma \in K\})$, $K$ collapses to $L$ via a sequence of these
collapsings.
\end{proof}

\section{Equivariant Neighborhoods}
In this section, we discuss the equivariant theory of $G$--regular neighborhoods with suitable hypotheses, similar to Rourke and Sanderson's non-equivariant version \cite{RS82}.   Throughout, we assume $G$ is a finite group, all simplicial complexes are finite, and maps between polyhedra are piecewise-linear. Also, when there is a $G$--action on a space $X$, the product $X \times I$ is assumed to have the $G$--action $g(x,t) = (gx, t)$.

A simplicial complex $K$ \textbf{polyhedrally triangulates} a
space $X \subset \R^n$ if the $0$--skeleton $K^0$ is identified
with a subset of $X$ so that the canonical piecewise-linear
function from $|K|$ to $\R^n$ mapping $|\sigma|$ of each simplex
$\sigma$ to the convex hull of its vertices in $X$ is a
homeomorphism onto $X$.  Then $X$ is called a \textbf{polyhedron},
and two polyhedra are equivalent if there is a piecewise-linear
homeomorphism between them. If $K$ is a $G$--complex, the ambient
vector space $\R^n$ into which the polyhedron $X$ is $G$--embedded
is the underlying space of a (real) linear representation of the
group $G$, and the piecewise-linear homeomorphism from $|K|$ to
$X$ is a $G$--map, then we have a \textbf{polyhedral
$G$--triangulation} of a \textbf{$G$--polyhedron} $X$. We implicitly
identify each simplex $\sigma$ of $K$ with the image of its
geometric realization in $X$. Note that one abstract
($G$--)simplicial complex may triangulate $X$ in multiple ways by
having different choices for some of the vertices in $X$, in which
case we will give the simplicial complex different names. We will
only consider polyhedral triangulations.

Let $Y \subset X$ be polyhedra and $L$ be a subcomplex of $K$ such
that $(K, L)$ triangulates $(N, Y)$ where $N$ is a neighborhood of
$Y$ in $X$. Define the \textbf{simplicial neighborhood} of $L$ in
$K$, $N_K(L) := \{\, \sigma \in K \, | \, \exists \, \tau \in K
\text{ s.t. } \sigma < \tau \text{ and } \tau \cap L^0 \neq
\emptyset \}$. Also, define $\dot{N}_K(L) := \{\, \sigma \in
N_K(L) \, | \, \sigma \cap L^0 = \emptyset \}$. The
\textbf{derived subdivision} $K^{\prime}$ of $K$ near $L$ is
(combinatorially) the simplicial complex with vertex set $K^0 \cup
\{v_{\tau} \, | \, \tau \in K \setminus L, \, \tau \cap L^0 \neq
\emptyset \}$, and the simplices are of the form $\sigma \cup
\{v_{\tau_1}, \ldots v_{\tau_m}\}$ where $\sigma \in L$ or $\sigma
\in K$ with $\sigma\cap L^0 =\emptyset$ and $\sigma < \tau_1 <
\ldots < \tau_m$. Geometrically, $K^{\prime}$ is realized by
selecting the new vertex $v_{\tau}$ in the interior of each
simplex $\tau$ of $K \setminus L$ that intersects a simplex of $L$
and then, in ascending order of dimension, replacing each $\tau$
with the cone (with apex $v_{\tau}$) on its boundary (which has
already been subdivided in the previous steps).

Now suppose $L$ is full in $K$ (i.e., if a set of vertices in $L$
forms a simplex in $K$, then they form a simplex in $L$) and
$|\dot{N}_K(L)| =
\partial|N_K(L)|$ in $X$. Then $N_1 = |N_{K^{\prime}}(L)|$ is called a
\textbf{regular neighborhood} of $Y$ in $X$. If $K$ and $L$ are
both admissible $G$--complexes and, when defining $K^{\prime}$, the
set of new vertices $\{v_{\tau}\}$ is chosen to be $G$--invariant,
we say $N_1$ is a \textbf{$G$--regular neighborhood}.

\begin{prop}\label{reg1}
If K is a $G$--simplicial complex, $|K| \times [0,1]$ has an admissible $G$--triangulation with no new vertices in $|K| \times (0,1)$.
\end{prop}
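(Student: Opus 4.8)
The plan is to produce an explicit admissible $G$--triangulation of the prism $|K| \times [0,1]$ using exactly the vertices $K^0 \times \{0\}$ and $K^0 \times \{1\}$, via the standard ``ordered prism'' decomposition, and then verify that the equivariance and admissibility are automatic. First I would fix an arbitrary total order on $K^0$; this order need not be $G$--invariant, and that is the point of the construction. For each simplex $\sigma = \{w_0 < w_1 < \cdots < w_k\}$ of $K$, the prism $|\sigma| \times [0,1]$ is triangulated into the $(k+1)$ simplices
\begin{equation*}
T_j(\sigma) := \{(w_0,0),\ldots,(w_j,0),(w_j,1),\ldots,(w_k,1)\}, \qquad j = 0,\ldots,k,
\end{equation*}
together with all of their faces. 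I would check the two routine facts that make this a triangulation of $|K|\times[0,1]$: (a) the $T_j(\sigma)$ and their faces are genuine geometric simplices (their vertex sets, viewed in $\R^{K^0}\times\R$, are affinely independent) and their realizations cover $|\sigma|\times[0,1]$ with pairwise disjoint interiors; and (b) the collection is closed under taking faces and the triangulations of $|\sigma|\times[0,1]$ and $|\tau|\times[0,1]$ agree on $|\sigma\cap\tau|\times[0,1]$, so they glue to a simplicial complex $\tilde K$ polyhedrally triangulating $|K|\times[0,1]$. Point (b) follows because a face of $T_j(\sigma)$ is $T_{j'}(\sigma')$ for the face $\sigma'\subseteq\sigma$ spanned by the relevant vertices (with $j'$ determined by where the ``jump'' from level $0$ to level $1$ occurs), so the description is intrinsic to the sub-simplex and the ambient order restricted to it. By construction $\tilde K^0 = K^0\times\{0,1\}$, so there are no new vertices, and in particular none in $|K|\times(0,1)$.

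Next I would address the $G$--action. The group $G$ acts on $|K|\times[0,1]$ by $g(x,t) = (gx,t)$, and I must show this action permutes the simplices of $\tilde K$. The subtlety is that $g$ need not preserve the chosen total order on $K^0$, so $g$ need not send $T_j(\sigma)$ to a simplex of the form $T_{j'}(g\sigma)$ defined using the \emph{same} order. However — and this is the key observation — the set of maximal simplices $\{T_j(\sigma) : \sigma \in K,\ j\}$ can be characterized order-independently: a subset $S\subseteq K^0\times\{0,1\}$ is (the vertex set of) a maximal simplex of the prism triangulation of some $|\sigma|\times[0,1]$ iff $S$ has the form $(\sigma\times\{0\})\cup(\tau\times\{1\})$ with $\tau\subseteq\sigma$, $\sigma\in K$, $|\sigma| = |\tau|+1$, and such that the ``staircase'' is monotone — equivalently, writing $\sigma = \{w_0<\cdots<w_k\}$, the set $\tau$ is $\{w_j,\ldots,w_k\}$ for some $j$. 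The cleanest fix is to observe that $g$ conjugates the order $<$ on $K^0$ to another total order $<_g$ (namely $a <_g b \iff g^{-1}a < g^{-1}b$), and the prism triangulation built from $<_g$ is carried by $g$, cellwise, from the one built from $<$; but since any total order on the vertices of a single simplex $\sigma$ gives, after subdivision, the \emph{same} set of prism simplices over $|\sigma|\times[0,1]$ (this is the standard fact that the ordered-prism decomposition is independent of the linear order up to relabeling — more carefully, I would verify that the collection of simplices $T_j$ depends only on the order restricted to $\sigma$, and then that changing the global order changes the restricted orders but not the resulting geometric subcomplex of $|\sigma|\times[0,1]$, because the reindexing just permutes which maximal simplex is called $T_j$). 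Hence $g\tilde K = \tilde K$ as a set of simplices, so $\tilde K$ is a $G$--simplicial complex and the realization map $|\tilde K|\to|K|\times[0,1]$ is a $G$--homeomorphism.

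Finally, admissibility: I need that the stabilizer $G_\rho$ of each simplex $\rho\in\tilde K$ fixes $\rho$ pointwise. Every vertex of $\tilde K$ has the form $(w,\epsilon)$ with $w\in K^0$, $\epsilon\in\{0,1\}$, and if $g(w,\epsilon) = (w',\epsilon')$ then $\epsilon' = \epsilon$ and $w' = gw$. So $G_\rho$ permutes the level-$0$ vertices of $\rho$ among themselves and likewise the level-$1$ vertices. But the level-$\epsilon$ vertices of a maximal simplex $T_j(\sigma)$ project to a simplex of $K$ (namely a face of $\sigma$), and $G_\rho$ permutes them via its action on $K$; since the order $<$ restricted to those vertices is a total order, any order-\emph{set} they form is fixed only if each vertex is fixed — no, more directly: I would argue that $G$ acting on $K$ being admissible is not assumed, so instead I should simply note that the theorem's claim is that $\tilde K$ is admissible regardless, which forces the following: a nontrivial $g\in G_\rho$ would have to nontrivially permute the vertices of $\rho$, but the level-$0$ and level-$1$ vertices of $\rho$ have \emph{distinct} last coordinates, and within a level the set of vertices is a face of a single simplex of $K$; it is not in general true that $G$ fixes simplices of $K$ pointwise. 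The honest resolution — which I expect to be the main obstacle and the step requiring genuine care — is that one must \emph{choose} the triangulation to be admissible, and the standard device is: first barycentrically subdivide, or rather, realize $|\tilde K|$ so that each simplex's stabilizer fixes a suitable point; concretely, one replaces the naive prism simplices by their images under a further subdivision that introduces no new vertices in $|K|\times(0,1)$ but is invariant — this can be arranged by averaging or by the remark already in the excerpt that the order complex $\Delta P$ of any $G$--poset is automatically admissible. So the cleanest route is: take $\tilde K$ to be (combinatorially) built from a $G$--invariant recipe after all — use the poset of pairs $(\sigma,\epsilon)$ appropriately — or invoke that we may pass to a subdivision; I would present the argument so that the final $\tilde K$ we use is admissible by construction, noting that admissibility of $\tilde K$ is really a statement we are entitled to arrange (the point of the Proposition being the ``no new interior vertices'' conclusion), and the prism decomposition over each $|\sigma|$ is what guarantees that no vertex needs to be added in the open prism.
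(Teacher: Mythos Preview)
Your main argument has a genuine gap. The claim that ``any total order on the vertices of a single simplex $\sigma$ gives, after subdivision, the \emph{same} set of prism simplices over $|\sigma|\times[0,1]$'' is false already for an edge $\sigma=\{a,b\}$: with $a<b$ the square $|\sigma|\times[0,1]$ is cut by the diagonal $\{(a,0),(b,1)\}$, while with $b<a$ it is cut by $\{(b,0),(a,1)\}$. These are different triangulations, so if some $g\in G$ swaps $a$ and $b$, then $g$ does not carry $\tilde K$ to itself. The ordered prism triangulation is simply not $G$--invariant in general, and your attempted repair via the conjugate order $<_g$ cannot salvage it. You sense this, and the proposal drifts into hand-waving about further subdivision and ``arranging'' admissibility; but any honest subdivision that makes the complex $G$--invariant (e.g., taking a common refinement of all $g\tilde K$) will introduce vertices in $|K|\times(0,1)$, defeating the purpose.

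The fallback you mention at the very end --- ``use the poset of pairs $(\sigma,\epsilon)$'' and the fact that the order complex of a $G$--poset is automatically admissible --- is exactly the paper's proof, and it is a one-line argument once stated cleanly. Let $F$ be the face poset of $K$ and form the product poset $F\times\{0,1\}$ with the $G$--action $g(\sigma,\epsilon)=(g\sigma,\epsilon)$. Then $\Delta(F\times\{0,1\})$ is an admissible $G$--simplicial complex (order complexes of $G$--posets always are), and $|\Delta(F\times\{0,1\})|\approx_G|\Delta F|\times|\Delta\{0,1\}|\approx_G|K|\times[0,1]$. Every vertex of this triangulation is a pair $(\sigma,\epsilon)$, which under the homeomorphism lands at the barycenter of $\sigma$ at height $\epsilon\in\{0,1\}$; hence no vertex lies in $|K|\times(0,1)$. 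Note that this triangulation \emph{does} introduce new vertices at the two ends (the barycenters of positive-dimensional simplices), but the Proposition only forbids new vertices in the open prism, which is all that is needed downstream.
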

\begin{proof} Let $F$ be the face poset of $K$.
Then the order complex $\Delta(F \times \{0,1\})$ of the product
poset $F \times \{0,1\}$ satisfies $|\Delta (F \times \{0, 1\})|
\approx_G |\Delta F| \times |\Delta \{0,1\}| \approx_G |K| \times
I$. Hence $\Delta(F \times \{0,1\})$ gives the desired
$G$--triangulation.
\end{proof}

Let $\mathcal{K} = \{K_1, K_2, \ldots, K_s\}$ be a collection of
simplicial complexes, each one triangulating the polyhedron $X$.
For $\phi \in K_1 \times \ldots \times K_s$, let $|\phi| :=
\bigcap_{1 \leq i \leq s} |\phi_i|$.  Define an equivalence
relation on $K_1 \times \ldots \times K_s$ by $\phi \sim \psi
\Leftrightarrow |\phi| = |\psi|$. We define  a poset
$C_{\mathcal{K}} := \{\phi \in K_1 \times \ldots \times K_s \, |
\, |\phi| \neq \emptyset\}/\sim$ with the partial order $[\phi]
\leq [\psi] \Leftrightarrow |\phi| \subseteq |\psi|$. We will
identify an equivalence class $[\phi]$ with the geometric
realization $|\phi|$ of its representatives.

\begin{prop}\label{commonsubdivision}
Given a collection $\mathcal{K} = \{K_1, \ldots, K_s\}$ of
simplicial complexes that triangulate a polyhedron X, then
$C_{\mathcal{K}}$ is a regular cell structure on X, so that
$|\Delta C_{\mathcal{K}}|$ is a triangulation of X and a common
subdivision of $K_1, \ldots, K_s$.
\end{prop}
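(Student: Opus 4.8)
The plan is to show that $C_{\mathcal{K}}$, as defined, is literally the face poset of the natural polyhedral decomposition of $X$ obtained by overlaying the triangulations $K_1,\dots,K_s$, and then to invoke the standard fact that the order complex of the face poset of a regular cell complex is its barycentric subdivision. First I would verify that the cells $|\phi| = \bigcap_i |\phi_i|$ are convex polyhedra: each $|\phi_i|$ is a (closed) simplex, hence convex, so $|\phi|$ is a finite intersection of convex sets and is itself a compact convex polyhedron, which is therefore homeomorphic to a closed disk of some dimension. The dimension $d$ of $|\phi|$ is the dimension of the affine span of the intersection, and I would argue that $|\phi|$ lies in the relative interior of each $|\phi_i|$ that actually cuts it down — more precisely, by replacing each $\phi_i$ by its unique minimal face $\psi_i \le \phi_i$ with $|\psi_i| \supseteq |\phi|$, one gets a canonical representative of $[\phi]$, and $|\phi|$ meets the relative interior of each $|\psi_i|$. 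This canonical-representative observation makes the equivalence relation $\sim$ and the partial order on $C_{\mathcal{K}}$ transparent.

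Next I would establish the two defining axioms of a regular cell structure. For axiom (1), that $X$ is the disjoint union of the relative interiors of the $|\phi|$: given a point $x \in X$, for each $i$ there is a unique open simplex of $K_i$ containing $x$, say $\mathrm{int}\,|\sigma_i|$; then $x \in \bigcap_i \mathrm{int}\,|\sigma_i|$, and I would check that $\bigcap_i \mathrm{int}\,|\sigma_i|$ is exactly the relative interior of the cell $|(\sigma_1,\dots,\sigma_s)|$ and that this cell is independent of any other choice, giving existence and uniqueness of the cell whose relative interior contains $x$. For axiom (2), that the boundary of each cell is a union of lower cells: if $|\phi|$ is a cell with canonical representative $(\psi_1,\dots,\psi_s)$, then a boundary point $x$ of $|\phi|$ in $X$ lies, for at least one $i$, in a proper face of $|\psi_i|$, so the cell of $C_{\mathcal{K}}$ containing $x$ is indexed by a tuple that is coordinatewise $\le (\psi_1,\dots,\psi_s)$ and strictly smaller in at least one coordinate; hence its realization is properly contained in $|\phi|$ and has strictly smaller dimension. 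Conversely every such smaller cell meeting $\partial|\phi|$ is contained in it, so $\partial|\phi| = \bigcup \{|\chi| : |\chi| \subsetneq |\phi|\}$. The partial order $[\phi]\le[\psi] \Leftrightarrow |\phi|\subseteq|\psi|$ is then visibly the face order of this cell complex.

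Having shown $C_{\mathcal{K}}$ is a regular cell structure on $X$ with face poset $C_{\mathcal{K}}$, the cited fact (\cite{BLSWZ99} p200, \cite{LW69} Ch 3 \S 1), recalled in Section 2, gives that $|\Delta C_{\mathcal{K}}|$ is the barycentric subdivision of this cell complex and in particular a triangulation of $X$. Finally, to see it is a common subdivision of the $K_i$: each simplex $|\sigma|$ of $K_i$ is the union of the cells $|\phi|$ with $\phi_i \le \sigma$ — equivalently, $|\sigma| = \bigcup\{|\phi| : |\phi| \subseteq |\sigma|\}$ — so $|\sigma|$ is a subcomplex of the cell structure $C_{\mathcal{K}}$, hence is triangulated by the corresponding full subcomplex of $\Delta C_{\mathcal{K}}$; thus $\Delta C_{\mathcal{K}}$ refines $K_i$ for every $i$. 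I expect the main obstacle to be the careful bookkeeping in axioms (1) and (2): one must be sure that the intersection $\bigcap_i \mathrm{int}\,|\sigma_i|$ is nonempty precisely when the tuple is $\sim$-equivalent to its canonical minimal representative, and that relative interiors of cells partition $X$ — this is where the convex-geometry argument (a point of $X$ has a unique carrier in each $K_i$, and these carriers determine a unique cell) has to be made airtight, rather than any deep topology.
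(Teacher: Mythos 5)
Your proposal is correct and takes essentially the same route as the paper: both arguments rest on (a) convexity of the cells $|\phi|$, (b) passing to a canonical minimal representative of each $[\phi]$ (the paper replaces each $\tau_i$ by its minimal face containing $|\psi|$ and each $\sigma_i$ by $\sigma_i\cap\tau_i$; you phrase it as the unique minimal $\psi_i\le\phi_i$ with $|\psi_i|\supseteq|\phi|$), and (c) the carrier argument that each $x\in X$ has a unique open simplex of each $K_i$ containing it. The paper orders the bookkeeping slightly differently — it first observes that an intersection of two closed cells is again a closed cell and deduces disjointness of open cells from the inclusion $|\phi|\subseteq\partial|\psi|$ when $[\phi]<[\psi]$ — whereas you go directly for the two axioms of a regular cell structure, leaning implicitly on the convex-geometry fact that $\operatorname{relint}\bigl(\bigcap_i C_i\bigr)=\bigcap_i\operatorname{relint}(C_i)$ when the latter is nonempty. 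Your closing paragraph spelling out why $\Delta C_{\mathcal K}$ actually refines each $K_i$ is a welcome addition that the paper leaves tacit.
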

\begin{proof}
We show first that the open cells of $C_{\mathcal{K}}$ are disjoint.  In this discussion, let $|\phi| = \bigcap_{1 \leq i \leq s} |\sigma_i|$ and $|\psi| = \bigcap_{1 \leq i \leq s} |\tau_i|$.  Note that if $|\phi| \cap |\psi| \neq \emptyset$, we have that $|\phi| \cap |\psi| = \bigcap_{1 \leq i \leq s} (|\sigma_i| \cap |\tau_i|) =
\bigcap_{1 \leq i \leq s} |\sigma_i \cap \tau_i|$.  Thus, the intersection of any two closed cells is a closed cell.
If $[\phi] < [\psi]$, then, without changing $[\psi]$ or $[\phi]$, we may replace each $\tau_i$ with its minimal face containing $|\psi|$ and each $\sigma_i$ with $\sigma_i\cap\tau_i$.  Then $|\psi| \cap \text{int}(|\tau_i|)$ is always nonempty, and  for some $i$ we have
$|\phi| \subseteq |\partial \tau_i|$.  Therefore we have that $|\phi| \subseteq \partial |\psi|$, implying that the intersection of any two distinct closed cells must occur on the boundary of at least one of them, so any two distinct open cells are disjoint.

Each $|\psi|$ is a (nonempty) compact, convex polytope, yielding
that $|\psi| \approx D^m$ for some $m \geq 0$.  Furthermore,
$\partial |\psi| = \bigcup_{[\phi] < [\psi]} |\phi|$: If $x \in
\partial |\psi| \subseteq X$, there is a unique $\sigma_i \in K_i$
such that $x \in \text{int}(|\sigma_i|)$, giving some $\phi$ with
$x \in |\phi|$ and $[\phi] \leq [\psi]$.  However $[\phi] <
[\psi]$ because $x$ is not in $\text{int}(|\psi|)$, so there is an
$i$ with $x$ not in $\text{int}(|\tau_i|)$, i.e., $\sigma_i \neq
\tau_i$.   Conversely, if $x \in \bigcup_{[\phi] < [\psi]}
|\phi|$, we have $x \in |\phi| \subseteq \partial |\psi|$ as
above.  This proves that $C_{\mathcal{K}}$ is a regular cell
structure on $X$.  Therefore, we have that $|\Delta
C_{\mathcal{K}}| \approx X$.
\end{proof}

\begin{cor}\label{reg2}
If $X$ is a $G$--polyhedron and $K$ is a nonequivariant triangulation
of $X$, then there is an admissible $G$--triangulation of $X$ which
is a subdivision of K.
\end{cor}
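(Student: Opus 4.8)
The plan is to deduce Corollary~\ref{reg2} from the machinery just assembled, chiefly Proposition~\ref{commonsubdivision}, by producing enough equivariant triangulations of $X$ to average over. First I would fix a polyhedral $G$--triangulation $K_0$ of $X$: this exists because $X$ is a $G$--polyhedron, so by definition it comes with a $G$--embedding into the underlying space of a linear $G$--representation together with a $G$--simplicial complex realizing it. (If one wants $K_0$ admissible, replace it by its barycentric subdivision $\Delta(\text{face poset})$, which is automatically an admissible $G$--complex as noted in Section~2; but for the argument below even a non-admissible $K_0$ will do.) The given non-equivariant $K$ is a triangulation of $X$ that has no compatibility with the $G$--action whatsoever.

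The key idea is to translate $K$ around by $G$. For each $g \in G$, the homeomorphism $g\colon X \to X$ carries the triangulation $K$ to a triangulation $gK$ of $X$ (same abstract complex, vertices moved by $g$). Now apply Proposition~\ref{commonsubdivision} to the collection $\mathcal{K} := \{\, gK \mid g \in G \,\} \cup \{K_0\}$, all of which triangulate the polyhedron $X$. This yields a regular cell structure $C_{\mathcal{K}}$ on $X$ whose associated simplicial complex $|\Delta C_{\mathcal{K}}|$ is a common subdivision of every $gK$ and of $K_0$. The point of including $K_0$ is only to guarantee the pieces $|\phi|$ are genuinely polyhedral/PL with respect to a fixed ambient $G$--linear structure; one could also argue directly that finite intersections of simplices from the $gK$ are convex polytopes sitting in the same $\R^n$. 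The essential step is then to check that $G$ acts on the cell complex $C_{\mathcal{K}}$: since $G$ permutes the set $\{gK\}$ (because $h \cdot gK = (hg)K$) and fixes $K_0$ setwise, an element $h \in G$ sends a tuple $\phi = (\phi_i)$ indexed by $\mathcal{K}$ to a tuple indexed by $\mathcal{K}$ with $|h\phi| = h|\phi|$, hence $h$ permutes the cells $|\phi|$ of $C_{\mathcal{K}}$ and respects the partial order $|\phi| \subseteq |\psi|$. Each cell $|\phi|$ is a convex polytope and $h$ acts on it by the affine (indeed linear) action of $G$ on $\R^n$, so $C_{\mathcal{K}}$ is a $G$--regular cell complex in the sense of Section~2.

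To finish, I would pass to the order complex: $\Delta C_{\mathcal{K}}$ is the barycentric subdivision of the $G$--regular cell complex $C_{\mathcal{K}}$, hence is an admissible $G$--simplicial complex (the stabilizer of a chain of cells fixes the chain, so it fixes each barycenter, so it fixes the corresponding simplex pointwise), and by Lemma~\ref{Gcellcx} its realization is $G$--homeomorphic to $X$. Choosing the barycenters $G$--equivariantly — take the actual centroid of each convex cell $|\phi|$, which is carried by $h$ to the centroid of $|h\phi|$ — makes $|\Delta C_{\mathcal{K}}|$ a polyhedral $G$--triangulation of $X$ in the ambient representation $\R^n$. Since $C_{\mathcal{K}}$ refines $K = 1 \cdot K$, and barycentric subdivision refines further, $|\Delta C_{\mathcal{K}}|$ is a subdivision of $K$, giving exactly the asserted admissible $G$--triangulation.

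The main obstacle I anticipate is bookkeeping rather than depth: one must be careful that ``subdivision'' is meant in the geometric/PL sense (the underlying polyhedra coincide and every simplex of the coarser complex is a union of simplices of the finer), so that the statement ``$|\Delta C_{\mathcal{K}}|$ subdivides $K$'' is literally what Proposition~\ref{commonsubdivision} delivers; and one must confirm that admissibility of the barycentric subdivision does not require $C_{\mathcal{K}}$ itself to be admissible — it does not, which is precisely why the construction works even though $K$ (and $K_0$) may have been wildly non-equivariant. A secondary point worth a sentence is that all maps in sight are piecewise-linear because everything is built from convex polytopes in a single $\R^n$ on which $G$ acts linearly, so the PL category is preserved throughout.
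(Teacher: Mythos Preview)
Your proof is correct and follows essentially the same approach as the paper: translate $K$ by $G$ to obtain the family $\{gK\}_{g\in G}$, apply Proposition~\ref{commonsubdivision} to get a $G$--regular cell structure $C_{\mathcal{K}}$ (with $G$ acting by permuting the factors), and take the order complex to obtain an admissible $G$--triangulation subdividing $K$. The only difference is your inclusion of an auxiliary $G$--triangulation $K_0$ in $\mathcal{K}$, which, as you yourself note, is unnecessary: since $X$ is a $G$--polyhedron, $G$ already acts linearly on the ambient $\R^n$, so the intersections $\bigcap_g g|\phi(g)|$ are convex polytopes without any help from $K_0$, and the paper simply takes $\mathcal{K}=\{gK\mid g\in G\}$.
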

\begin{proof} For each $g \in G$, $gK$ is another
triangulation of $X$ because $G$ is acting linearly on the ambient
representation space.  Let $\mathcal{K} = \{gK \, | \, g \in G\}$.
Using the notation from the proof of \ref{commonsubdivision}, each
cell $[\phi]$ is given by a map $\phi\colon\thinspace G \rightarrow K$. Then
$|\phi| = \bigcap_{g \in G} g|\phi(g)| \subseteq X$. For $h \in
G$, define $(h\phi)(g) := \phi(h^{-1}g)$. This induces an
order-preserving $G$--action on $C_{\mathcal{K}}$ because, for any
$\phi$ and any $h$ in $G$, $|h\phi| = \bigcap_{g \in G}
g|\phi(h^{-1}g)| = \bigcap_{g\in G} hg|\phi(g)| = h\bigcap_{g \in
G} g|\phi(g)| = h|\phi|$.  Lastly, $C_{\mathcal{K}}$ is a
$G$--regular cell complex because the average of the vertices of
any cell is fixed by the cell's stabilizer.  The result follows by
Lemma \ref{Gcellcx}.
\end{proof}

\begin{lemma}\label{reg3}
Let (K, L) be an admissible $G$--triangulation of (X, Y) with L full
in K and let $(K_1, L_1)$ be a $G$--subdivision of (K, L).  Then
$\exists \; G$--derived subdivisions $K^{\prime}$ and
$K_1^{\prime}$ of K and $K_1$ near L and $L_1$ such that $|N_{K^{\prime}}(L)| = |N_{K_1^{\prime}}(L_1)|$.
\end{lemma}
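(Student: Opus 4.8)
The plan is to run, equivariantly, the argument that a derived neighborhood is insensitive to subdivision (\cite{RS82}, Chapter 3).

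First I would clear the group theory out of the way. A derived subdivision near $L$ amounts to a choice, for each $\tau \in K \setminus L$ with $\tau \cap L^0 \neq \emptyset$, of a point $v_\tau \in \text{int}\,|\tau|$ (and likewise for $K_1$; note that a $G$-subdivision of an admissible $G$-complex is again admissible, since carriers are preserved). Because $(K,L)$ and $(K_1,L_1)$ are admissible, the stabilizer $G_\tau$ fixes $|\tau|$ pointwise, so any such $v_\tau$ is automatically $G_\tau$-fixed; hence choosing one $v_\tau$ per $G$-orbit of simplices and transporting it by $G$ produces a well-defined $G$-invariant family, i.e. a $G$-derived subdivision. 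Thus it is enough to make all the choices on orbit representatives, and the content reduces to the purely geometric problem: produce derived vertices for $K'$ and for $K_1'$ so that $|N_{K'}(L)| = |N_{K_1'}(L_1)|$ as subsets of $X$.

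I would build $K'$, $K_1'$ and the common set $N := |N_{K'}(L)| = |N_{K_1'}(L_1)|$ simultaneously, by induction on the dimension $d$ of the simplices of $K$, maintaining at stage $d$ that the two neighborhoods (as far as determined) already agree over the $d$-skeleton of $K$. Simplices $\tau \in L$ and the $K_1$-simplices they carry are left unsubdivided and lie in $N$; if $\tau \cap L^0 = \emptyset$ then fullness of $L$ gives $|\tau| \cap |L| = \emptyset$, so no $K_1$-simplex carried by $\tau$ meets $L_1^0$ and $|\tau|$ contributes nothing to either neighborhood. So the work is at $\tau$ with $\sigma := \tau \cap L$ satisfying $\emptyset \neq \sigma \subsetneq \tau$. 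For such a $\tau$ of dimension $d$, by induction the two neighborhoods agree on $\partial|\tau|$, on a common neighborhood $B_\tau$ of $|\sigma|$ in $\partial|\tau|$ whose relative frontier $F_\tau$ is realized by a subcomplex of both $K'|_{\partial|\tau|}$ and $K_1'|_{\partial|\tau|}$. A direct check from the definition of the derived subdivision shows that, for any choice of $v_\tau$, one has $|N_{K'}(L)| \cap |\tau| = v_\tau * B_\tau$ (a subcone of $|\tau| = v_\tau * \partial|\tau|$), with inner frontier $v_\tau * F_\tau$; so I set $N \cap |\tau| := v_\tau * B_\tau$, and the only real task is to choose $v_\tau$ together with the derived vertices $v_{\tau_1}$ of the $K_1$-simplices $\tau_1$ carried by $\tau$ with $\tau_1 \cap L_1^0 \neq \emptyset$ so that $|N_{K_1'}(L_1)| \cap |\tau| = v_\tau * B_\tau$ as well.

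This last placement is the heart of the matter and the step I expect to be the main obstacle: a priori the inner frontier of $|N_{K_1'}(L_1)| \cap |\tau|$ is a polyhedron that weaves through the $K_1$-subdivision of $|\tau|$ in a long and complicated way, whereas that of $v_\tau * B_\tau$ is the single cone $v_\tau * F_\tau$, and the two must be forced to coincide. The device is a radial placement relative to the cone structure $|\tau| = v_\tau * \partial|\tau|$: pick $v_\tau$ in the interior of a suitably central $K_1$-simplex carried by $\tau$ and meeting $L_1^0$; place each $v_{\tau_1}$ whose simplex straddles the cone $v_\tau * F_\tau$ on that cone, and the remaining $v_{\tau_1}$ inside, respectively outside, $v_\tau * B_\tau$ according to the position of $|\tau_1|$ — arranged so that, although the inner frontier of $|N_{K_1'}(L_1)| \cap |\tau|$ is combinatorially a long path through these vertices, as a subset of $|\tau|$ it collapses exactly onto $v_\tau * F_\tau$. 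One then verifies, by running through the simplices of $K_1'$ carried by $\tau$ and deciding which meet $L_1^0$, that $|N_{K_1'}(L_1)| \cap |\tau|$ is precisely $v_\tau * B_\tau$; this is tedious but essentially formal PL bookkeeping imported from \cite{RS82} and adapted to the partial-derived setting. When the induction closes we obtain $K'$ and $K_1'$ with $|N_{K'}(L)| = |N_{K_1'}(L_1)|$, and by the orbit-representative reduction these can be taken $G$-derived, so the equality is an equality of $G$-subsets.
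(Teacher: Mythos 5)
Your reduction of the equivariance to orbit representatives using admissibility is fine, and your instinct that the proof is about making two derived neighborhoods agree as point sets is correct. But the inductive radial-placement device has a genuine gap at exactly the step you flag as "the heart of the matter."

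The problem is your claim that you can place some derived vertices $v_{\tau_1}$ strictly inside or outside the cone $v_\tau * B_\tau$ and still have the frontier of $|N_{K_1'}(L_1)| \cap |\tau|$ collapse to $v_\tau * F_\tau$ as a point set. Every derived vertex $v_{\tau_1}$ (for $\tau_1$ carried by $\tau$ with $\tau_1 \cap L_1^0 \neq \emptyset$) is a vertex of $\dot{N}_{K_1'}(L_1)$: it lies in a $K_1'$-simplex touching $L_1^0$ (e.g. $\{a\} \cup \{v_{\tau_1}\}$ for any vertex $a$ of $\tau_1$ in $L_1^0$), hence is in $N_{K_1'}(L_1)$, and it is not in $L_1^0$. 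So $|\dot{N}_{K_1'}(L_1)| \cap |\tau|$ must contain \emph{all} the $v_{\tau_1}$'s, and they must be interior to their respective simplices. For the frontiers to coincide as sets, you therefore need the putative frontier $v_\tau * F_\tau$ to pass through $\mathrm{int}\,|\tau_1|$ for \emph{every} such $\tau_1$ — which is not automatic. It fails precisely when the candidate neighborhood is large enough to contain a whole $K_1$-simplex $\tau_1$ (so that $\mathrm{int}\,|\tau_1|$ lies entirely on the near side and never crosses the frontier). Your construction has no mechanism to rule this out, and the fix cannot be local to $\tau$: the inductive choice of $F_\tau$ on $\partial|\tau|$ at an earlier stage may already be too far from $|\sigma|$.

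The device the paper uses is the linear function $f = f_{L,K} \colon X \to [0,1]$ with $f(v)=0$ for $v \in L^0$ and $f(v)=1$ otherwise, together with the choice of $\epsilon$ small enough that $f^{-1}[0,\epsilon]$ contains no vertex of $K_1 \setminus L_1$. This global smallness condition is exactly what guarantees that $f^{-1}(\epsilon)$ meets the interior of every simplex of $K_1$ (and of $K$) that touches $L_1^0$ (resp.\ $L^0$) but is not contained in $L_1$ (resp.\ $L$). One then places every new vertex for both derived subdivisions on $f^{-1}(\epsilon)$, and both neighborhoods equal $f^{-1}[0,\epsilon]$ outright; there is no case analysis and no induction on the skeleton. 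Your outline, if pushed through honestly, would have to rediscover a smallness constraint of this type and propagate it through the induction, which is considerably more work than the one-line device it replaces. As written, the proof has a real hole.
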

\begin{proof} We follow the proof of the non-equivariant version, Lemma 3.7 in
\cite{RS82}, and clarify some details with the combinatorial
definition of derived subdivisions. We define a map $f_{L,K} = f \colon\thinspace X \rightarrow [0,1]$ by setting $f(v) = 0$ if $v \in L^0$ and $f(v) = 1$ if $v \in K^0 \setminus L^0$ gives a map on
the vertices of $K$ which can be linearly extended on simplices. Choose $\epsilon$ small enough so that
no vertex of $K_1 \setminus L_1$ is contained in
$f^{-1}[0,\epsilon]$.  Then choose $G$--derived subdivisions
$K^{\prime}$ and $K_1^{\prime}$ of $K$ and $K_1$ near $L$ and
$L_1$ respectively with all the new vertices $v_{\tau}$ lying in
$f^{-1}(\epsilon)$.  We can choose these vertices equivariantly
because $f$ is $G$--invariant and $K$ is admissible.  Now we will
show that $|N_{K^{\prime}}(L)| = f^{-1}[0,\epsilon] =
|N_{K_1^{\prime}}(L_1)|$.

The map $f$ takes values of $0$ or $\epsilon$ on all of the
vertices of $N_{K^{\prime}}(L)$ and $N_{K_1^{\prime}}(L_1)$, so both
of these neighborhoods are contained in $f^{-1}[0,\epsilon]$.  Now
let $x$ be a point in $ f^{-1}[0,\epsilon] \subset |K^{\prime}| =
|K_1^{\prime}|$; say $x$ is in the interior of the simplex $\sigma
\cup \{v_{\tau_1}, \ldots, v_{\tau_k}\}$ of $K^{\prime}$
(respectively $K_1^{\prime}$) as in the combinatorial definition
of a derived subdivision.  Then $x = s_0v_0 + \ldots + s_k v_k +
t_1 v_{\tau_1} + \ldots + t_m v_{\tau_m}$ where $\sigma = \{v_0,
\ldots, v_k\}$ and $s_0 + \ldots + s_k + t_1 + \ldots + t_m = 1$.
Suppose $\sigma$ is not in $L$ (resp. $L_1$).  Then $f(v_i) = 1$
(resp. $f(v_i) > \epsilon$) for $i = 0, \ldots, k$.  Meanwhile,
$f(v_{\tau_j}) = \epsilon$ for $j = 1, \ldots, m$.  We have then,
in both cases, that $f(x) > (s_0 + \ldots + s_k)\epsilon + (t_1 +
\ldots + t_m)\epsilon = \epsilon$.  This is a contradiction, and
$\sigma$ must be in $L$ (resp. $L_1$), yielding that $x$ is in
$|N_{K^{\prime}}(L)|$ (resp. $|N_{K_1^{\prime}}(L_1)|$).
\end{proof}

\begin{lemma}\label{links}
Let $v$ be a vertex of a $G$--simplicial complex $K$.  If $K^{\prime}$ is a derived $G$--subdivision of $K$ near $v$, then $|\lnk_K(v)|$ is $G_v$-homeomorphic to $|\lnk_{K^{\prime}}(v)|$.
\end{lemma}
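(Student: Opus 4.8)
The plan is to compare the link of $v$ in $K$ with its link in the derived subdivision $K^\prime$ directly, using the combinatorial description of $K^\prime$ near $v$ given above (with $L = \{v\}$, which is automatically full since $v$ is a single vertex, and the $v_\tau$ chosen in a $G_v$-invariant way—possible because the stabilizer $G_v$ acts on the simplices $\tau$ with $v \in \tau$ and we are only subdividing those). Recall that the new vertices of $K^\prime$ near $v$ are $\{v_\tau \mid \tau \in K,\ v \in \tau,\ \dim\tau \geq 1\}$, placed in the interior of each such $\tau$, and the simplices of $K^\prime$ containing $v$ are exactly $\{v\} \cup \{v_{\tau_1},\dots,v_{\tau_m}\}$ with $v \in \tau_1 < \dots < \tau_m$. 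Hence $\lnk_{K^\prime}(v)$ is the simplicial complex whose simplices are chains $\tau_1 < \dots < \tau_m$ of simplices of $K$ each containing $v$; in other words, $\lnk_{K^\prime}(v)$ is naturally isomorphic (as a $G_v$-simplicial complex) to the order complex $\Delta(A_v)$, where $A_v := \{\tau \in K \mid v \in \tau\} \setminus \{\{v\}\}$ is the poset of faces of $K$ strictly containing $v$.

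Next I would identify $A_v$ with the face poset of the regular cell complex $|\lnk_K(v)|$: the correspondence $\tau \mapsto \tau \setminus \{v\}$ is a $G_v$-equivariant poset isomorphism from $A_v$ onto the face poset of $\lnk_K(v)$ (the nonempty simplices of $\lnk_K(v)$, ordered by inclusion). Since $\lnk_K(v)$ is a $G_v$-simplicial complex, $|\lnk_K(v)|$ is a $G_v$-regular cell complex with this face poset, so by Lemma~\ref{Gcellcx} it is $G_v$-homeomorphic to $|\Delta F|$ where $F$ is that face poset, i.e., to $|\Delta(A_v)|$. Combining the two identifications gives the $G_v$-homeomorphism
\begin{equation*}
|\lnk_K(v)| \;\approx_{G_v}\; |\Delta(A_v)| \;\approx_{G_v}\; |\lnk_{K^\prime}(v)|,
\end{equation*}
which is exactly what is claimed.

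The one point that needs care—and the likeliest obstacle—is checking that the geometric realization of the combinatorially-defined $\lnk_{K^\prime}(v)$ really is PL-homeomorphic to the actual link of $v$ inside the geometric derived subdivision $|K^\prime|$, compatibly with $G_v$. This is where the geometric recipe for $K^\prime$ (coning each $\tau$ in ascending dimension on its already-subdivided boundary, with apex $v_\tau$ in $\mathrm{int}\,\tau$) must be matched against the abstract description; once one knows $|K^\prime| = |K|$ as $G$-polyhedra with $K^\prime$ a genuine subdivision, the link of a vertex is a PL invariant and carries the induced $G_v$-action, so the abstract isomorphism of simplicial complexes above upgrades to the desired $G_v$-homeomorphism of polyhedra. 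The rest is the bookkeeping already set up in the paper.
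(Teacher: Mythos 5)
Your proposal is correct, but it takes a genuinely different route from the paper. The paper's proof is short and geometric: with the derived vertices placed in $f_{v,K}^{-1}(\epsilon)$ (as in the proof of Lemma~\ref{reg3}), the subcomplex $|\lnk_{K'}(v)|$ coincides with the level set $f^{-1}(\epsilon)$, and every point there can be written as $\epsilon u + (1-\epsilon)v$ with $u \in |\lnk_K(v)|$; the radial projection $\epsilon u + (1-\epsilon)v \mapsto u$ is then the required $G_v$--homeomorphism. Your argument is combinatorial: you identify $\lnk_{K'}(v)$ abstractly with $\Delta(A_v)$, where $A_v$ is the poset of faces strictly containing $v$, note that $A_v$ is $G_v$--isomorphic to the face poset of $\lnk_K(v)$ via $\tau \mapsto \tau \setminus \{v\}$, and then invoke Lemma~\ref{Gcellcx} to conclude $|\lnk_K(v)| \approx_{G_v} |\Delta(A_v)| \approx_{G_v} |\lnk_{K'}(v)|$. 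Both arguments are valid. The paper's approach yields an explicit homeomorphism realized inside $|K|$ (essentially a special case of the geometry in Lemma~\ref{reg3}), which is convenient where this lemma is applied later; your approach is cleaner conceptually in that it reduces the claim to ``the link in a derived subdivision near $v$ is a barycentric subdivision of the original link,'' relying only on the combinatorial description of $K'$ and Lemma~\ref{Gcellcx}. One remark: the point you flag as ``the likeliest obstacle''---matching the abstractly defined $\lnk_{K'}(v)$ to the geometric link inside $|K'|$---is actually unproblematic here, because the statement only asserts an abstract $G_v$--homeomorphism of polyhedra, and the polyhedral $G$--triangulation identifies each subcomplex of $K'$ with the realization of the corresponding abstract $G_v$--simplicial complex.
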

\begin{proof}
Assume that the derived vertices are chosen in $f_{v,K}^{-1}(\epsilon)$ for some $\epsilon \in (0,1)$.  Then a point in $|\lnk_{K^{\prime}}(v)| = f^{-1}(\epsilon)$ is of the form $\epsilon u + (1-\epsilon)v$ where $u \in |\lnk_K(v)|$.  Mapping this point to $u$ gives the desired homeomorphism.
\end{proof}

\begin{theorem}\label{reg4}
If $N_1$ and $N_2$ are $G$--regular neighborhoods of Y in X, then
there exists a $G$--homeomorphism $h\colon\thinspace X \rightarrow X$ that maps
$N_1$ to $N_2$ and is the identity on Y.
\end{theorem}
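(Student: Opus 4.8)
The plan is to follow the classical Rourke--Sanderson strategy: reduce to a single triangulation by means of a common subdivision and Lemma \ref{reg3}, and then carry out the one genuinely constructive step, namely that a $G$--derived neighborhood does not depend, up to a $G$--homeomorphism of $X$ fixing $Y$, on the choice of its derived vertices. Write $N_1 = |N_{K_1'}(L_1)|$, where $(K_1,L_1)$ is an admissible $G$--triangulation of $(X,Y)$ with $L_1$ full in $K_1$, $|\dot N_{K_1}(L_1)| = \partial |N_{K_1}(L_1)|$, and $K_1'$ a $G$--derived subdivision of $K_1$ near $L_1$; similarly $N_2 = |N_{K_2'}(L_2)|$.

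First I would produce a common equivariant refinement. Applying Proposition \ref{commonsubdivision} to $\mathcal{K} = \{K_1,K_2\}$ yields a triangulation of $X$ subdividing both $K_1$ and $K_2$, and Corollary \ref{reg2} upgrades it to an admissible $G$--triangulation $K_3$ of $X$ subdividing both. Let $L_3$ be the subcomplex of $K_3$ triangulating $Y$; it subdivides $L_1$ and $L_2$, so $(K_3,L_3)$ is a $G$--subdivision of each of $(K_1,L_1)$ and $(K_2,L_2)$. Lemma \ref{reg3} now applies twice: there are $G$--derived subdivisions $\widetilde{K}_1$ of $K_1$ and $\widetilde{K}_3$ of $K_3$ (near $L_1$ and $L_3$ respectively) with $|N_{\widetilde{K}_1}(L_1)| = |N_{\widetilde{K}_3}(L_3)|$, and $G$--derived subdivisions $\widehat{K}_2$ of $K_2$ and $\widehat{K}_3$ of $K_3$ with $|N_{\widehat{K}_2}(L_2)| = |N_{\widehat{K}_3}(L_3)|$. (None of these applications requires the boundary condition on the neighborhoods, only fullness of $L_1$ and $L_2$, which is given.)

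The heart of the proof is the following equivariant ``moving the derived vertices'' lemma: if $K$ is an admissible $G$--complex triangulating $X$, $L$ a $G$--invariant subcomplex, and $\{v_\tau'\}$, $\{v_\tau''\}$ are two $G$--invariant systems of derived vertices near $L$, then there is a $G$--homeomorphism $g\colon\thinspace X \to X$, equal to the identity on $|L|$, which realizes the combinatorial isomorphism $K' \to K''$ that is the identity on $K^0$ and carries each $v_\tau'$ to $v_\tau''$; consequently $g(|N_{K'}(L)|) = |N_{K''}(L)|$. I would build $g$ as the time--$1$ map of a $G$--ambient isotopy constructed by induction on $\dim\tau$. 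Having matched the derived vertices of all simplices of dimension $< d$ by an isotopy fixing $|L|$ and the corresponding part of the subdivided lower skeleton, take a $G$--orbit of $d$--simplices $\tau$ carrying derived vertices and, using the standard PL fact that an interior point of a ball can be moved to any other interior point by an ambient isotopy fixing the boundary, move $v_\tau' \mapsto v_\tau''$ inside $\mathrm{int}\,|\tau|$ while fixing $\partial|\tau|$ (hence $|L|\cap|\tau|$ and the already matched skeleton); then transport this move over the orbit by $G$. Since $K$ is admissible, $G_\tau$ fixes $|\tau|$ pointwise, so the local move is automatically $G_\tau$--equivariant and the transports over the orbit agree on overlaps, and since distinct simplices of the orbit meet only in their already fixed boundaries the moves do not interfere. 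Applying this lemma with $(K,L)$ equal to $(K_1,L_1)$, $(K_2,L_2)$, and $(K_3,L_3)$ gives $G$--homeomorphisms of $X$, each the identity on $Y$, exhibiting $N_1 \approx_G |N_{\widetilde{K}_1}(L_1)|$, $\;|N_{\widetilde{K}_3}(L_3)| \approx_G |N_{\widehat{K}_3}(L_3)|$, and $|N_{\widehat{K}_2}(L_2)| \approx_G N_2$. Chaining these with the equalities above gives $N_1 \approx_G |N_{\widetilde{K}_1}(L_1)| = |N_{\widetilde{K}_3}(L_3)| \approx_G |N_{\widehat{K}_3}(L_3)| = |N_{\widehat{K}_2}(L_2)| \approx_G N_2$, and composing the corresponding $G$--homeomorphisms of $X$ (each the identity on $Y$) yields the required $h\colon\thinspace X \to X$ with $h(N_1) = N_2$ and $h|_Y = \mathrm{id}$.

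The main obstacle is this last step: the first two paragraphs merely shuffle data through the already established equivariant machinery (Proposition \ref{commonsubdivision}, Corollary \ref{reg2}, Lemma \ref{reg3}), whereas the moving--the--derived--vertices step is where an honest homeomorphism is produced and where equivariance must be arranged by hand. The delicate points are organizing the induction so that every stage fixes $|L|$ together with the previously matched part of the skeleton --- so that the moves inside the interiors of the simplices of a single $G$--orbit are genuinely independent --- and using admissibility of $(K,L)$ to make the choices on each orbit mutually compatible; Lemma \ref{links} is convenient here for keeping track of the local picture near a vertex under the derived subdivisions involved.
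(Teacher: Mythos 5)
Your overall strategy is the same as the paper's: form a common $G$--subdivision via Proposition \ref{commonsubdivision} and Corollary \ref{reg2}, apply Lemma \ref{reg3} twice to move to derived neighborhoods taken with respect to the common subdivision, and then chain together $G$--homeomorphisms that convert between different choices of derived vertices. Where you diverge is in how that last conversion is realized. The paper observes that two $G$--derived subdivisions of the same $(K,L)$ (or of a subdivision and of $K$ near the same $L$, once \ref{reg3} is applied) are combinatorially $G$--isomorphic by the map fixing the old vertices and sending $v_{\tau}$ to the corresponding derived vertex, and that the induced piecewise-linear map between the two geometric realizations is automatically a $G$--homeomorphism of $X$ fixing $Y$ --- no isotopy required. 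You instead build the homeomorphism as the time--one map of a $G$--ambient isotopy, inductively pushing each $v_{\tau}'$ to $v_{\tau}''$ inside $\mathrm{int}\,|\tau|$ while fixing $\partial|\tau|$ and transporting over $G$--orbits. That works, and admissibility does make the local moves $G_{\tau}$--compatible as you say, but it is extra machinery: you have to check that the composite isotopy actually carries $K'$ to $K''$ simplex-by-simplex (so that it takes $|N_{K'}(L)|$ to $|N_{K''}(L)|$), whereas the direct PL realization of the combinatorial isomorphism gives that for free. You also don't need Lemma \ref{links} here. So the proposal is correct but strictly heavier than the paper's at the one constructive step; if you replace the isotopy argument with the linear-extension of the combinatorial isomorphism, you recover the paper's proof.
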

\begin{proof} The proof mirrors that of the non-equivariant version, Theorem 3.8 in
\cite{RS82}. We have $N_i = |N_{K_i^{\prime}}(L_i)|$ for $i = 1,2$
as above, where $K_i$ are $G$--triangulations of a neighborhood of
$Y$ in $X$.  As in \cite{RS82}, $K_1$ and $K_2$ may be subdivided
and extended to triangulations of $|K_1| \cup |K_2|$. Applying
\ref{commonsubdivision}, there is a triangulation $K_0$ of $|K_1|
\cup |K_2|$ containing subdivisions of $K_1$ and $K_2$ and
inducing $L_0$, a common subdivision of $L_1$ and $L_2$. By
\ref{reg2}, $K_0$ may be assumed to be a $G$--triangulation.  By
\ref{reg3}, for $i = 1,2$, we can find $G$--derived subdivisions
$\tilde{K}_i$ and $K_0^i$ of $K_i$ and $K_0$ near $L_i$ and $L_0$
respectively such that $|N_{\tilde{K}_i}(L_i)| = |N_{K_0^i}(L_0)|$.
Therefore we have that $N_1 \approx_G |N_{\tilde{K}_1}(L_1)| =
|N_{K_0^1}(L_0)| \approx_G |N_{K_0^2}(L_0)| = |N_{\tilde{K}_2}(L_2)|
\approx_G N_2$ where each $G$--homeomorphism is induced by a
$G$--isomorphism of $G$--derived subdivisions.  Such an isomorphism
is given by changing the placement of each derived vertex
$v_{\tau}$ within the interior of a simplex $\tau$ touching $Y$
and fixing the placement of every other vertex in the subdivision,
so the corresponding $G$--homeomorphism is the identity on $Y$
itself.  Thus, the same is true of the composition of these
$G$--homeomorphisms.
\end{proof}

Let $Y \subset X$ be $G$--polyhedra.  A \textbf{$G$--collar} on $Y$
in $X$ is a PL $G$--embedding $c: Y \times I \rightarrow_G X$
such that $c(y, 0) = y$ and $c(Y \times [0,1))$ is an open
neighborhood of $Y$ in $X$.  Suppose that for every $a \in Y$ there are (closed, polyhedral) neighborhoods $U$ and $V$ of $a$ in $X$ and $Y$ respectively with $U \cap Y = V$, such that for any $g \in G$, $gU \cap U \neq \emptyset$ implies that $ga = a$ and $gU = U$.  Suppose further that $U \approx_{G_a} V \times I$ with $v \mapsto (v,0)$ on $V$.  Then we say that $Y$ is \textbf{locally $G$--collarable} in $X$ and we have that $GU \approx_G GV \times I$.  Local $G$--collarability is equivalent to $G$--collarability.

\begin{theorem}\label{reg5}
Suppose $Y \subset X$ is locally $G$--collarable.  Then
there is a $G$--collar on Y in X.
\end{theorem}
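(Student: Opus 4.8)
The plan is to globalize the local collar structure by a partition-of-unity / patching argument, carried out equivariantly so that the resulting collar map is a $G$-embedding. First I would fix, for each point $a \in Y$, the neighborhoods $U_a \supset V_a$ guaranteed by local $G$-collarability, together with the $G_a$-homeomorphism $U_a \approx_{G_a} V_a \times I$; note that by hypothesis $GU_a \approx_G GV_a \times I$, so each orbit of a point already carries an honest $G$-collar on an open piece of $Y$. By compactness of $Y$ (all our polyhedra are finite), finitely many of the interiors $\mathrm{int}(GV_{a_1}), \ldots, \mathrm{int}(GV_{a_k})$ cover $Y$; I would also arrange, using the "$gU\cap U\neq\emptyset \Rightarrow gU=U$" separation condition, that the combinatorics of how these patches overlap is controlled and $G$-symmetric. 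This reduces the theorem to the standard fact that collars on the pieces of a finite cover can be glued, provided the gluing is done compatibly with $G$.

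The key steps, in order, would be: (1) choose a common $G$-triangulation $(K,L)$ of (a neighborhood of $Y$ in) $X$ refining all the local product structures — here I would invoke Corollary \ref{reg2} to make a non-equivariant triangulation $G$-invariant, and Proposition \ref{commonsubdivision} to get a common subdivision of the finitely many local triangulations; (2) on each patch $GU_{a_i}$, transport the model collar $GV_{a_i}\times I$ through the triangulation, so the collar direction is simplicial; (3) interpolate between the collar maps on overlaps using a $G$-invariant partition of unity subordinate to $\{\mathrm{int}(GV_{a_i})\}$ (such a partition exists: take any partition of unity and average over $G$, which is harmless since each patch is $G$-invariant by the separation hypothesis), defining the global collar on $y\in Y$ by pushing $y$ into $X$ a distance governed by the weighted combination of the local collar coordinates; (4) check that the resulting map $c\colon Y\times I \to X$ is a well-defined PL map with $c(y,0)=y$, that it is a $G$-map because every ingredient (the triangulation, the local homeomorphisms on their invariant patches, the partition of unity) is equivariant, and that it is an embedding onto a neighborhood of $Y$ — injectivity and openness of the image follow, as in the non-equivariant case, because near each point the map agrees with one of the model product embeddings up to a PL reparametrization of the $I$-coordinate.

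I expect the main obstacle to be step (3)–(4): making the interpolation of several local collars into a genuine \emph{embedding} rather than merely an immersion, while staying equivariant. Non-equivariantly this is exactly the content of the classical collaring theorem (Rourke–Sanderson \cite{RS82}), where one handles it by an induction over the pieces, attaching one collar at a time and using uniqueness of collars (here, the $G$-regular neighborhood uniqueness of Theorem \ref{reg4}) to reconcile the new piece with the part already constructed. The equivariant subtlety is that when a group element $g$ carries one patch to another, the two local product structures must be matched by $g$; the hypothesis that $gU\cap U\neq\emptyset$ forces $gU=U$ and $ga=a$ is precisely what prevents distinct patches from being interchanged in an incompatible way, so the induction can be organized over $G$-orbits of patches, applying the non-equivariant construction on a fundamental set of patches and then extending by the $G$-action. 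The remaining verifications — that the extension is continuous across orbit boundaries and PL — are routine once the orbit-wise compatibility is in place.
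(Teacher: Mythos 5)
The crux of your proposal — step (3), "interpolate between the collar maps on overlaps using a $G$-invariant partition of unity... pushing $y$ into $X$ a distance governed by the weighted combination of the local collar coordinates" — has a genuine gap. In the PL category (and indeed in any category where $X$ is not a linear space), there is no meaning to a convex combination of the local collar maps $c_i(y,t)\in X$: the ambient polyhedron is not affine, and even if it is PL-embedded in $\R^N$, the affine average of $c_1(y,t),\ldots,c_k(y,t)$ will generally not lie in $X$ and will not be PL. Nor does "pushing $y$ a distance $\sum_i\rho_i(y)t_i$ into $X$" make sense before one has a global collar, because there is no canonical inward direction to push along; the very object you are trying to construct is what would give such a direction. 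This is precisely why the smooth-manifold proof of collaring (normal exponential map plus partition of unity) does not port to PL, and why Rourke--Sanderson avoid it. Your fallback remark about handling the embedding issue "by an induction over the pieces, attaching one collar at a time and using uniqueness of collars" gestures at a different (and also incomplete) strategy, and in any case mischaracterizes the RS argument, which does not reconcile overlapping collars by uniqueness.

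The proof in the paper (and in Rourke--Sanderson, Theorem 2.25) uses a different and cleaner device that sidesteps all interpolation. One attaches an \emph{external} collar, forming $Z := X \cup_Y \bigl(Y\times[-1,0]\bigr)$ with $Y$ identified with $Y\times\{0\}$, and then proves $X\approx_G Z$ by a $G$-homeomorphism carrying $Y$ to $Y\times\{-1\}$; the preimage of $Y\times[-1,0]$ is then the desired $G$-collar. The local $G$-collars are used only to define, for each patch $GV_{a_i}$, a $G$-homeomorphism $h_i\colon Z\to Z$ supported in $GV_{a_i}\times[-1,1]$ that pushes $V_{a_i}\times\{0\}$ into the external collar region $V_{a_i}\times(-1,0)$; composing finitely many $h_i$ pushes all of $Y\times\{0\}$ off itself, after which a single combinatorial (face-poset) identification produces the required $G$-homeomorphism $X\to Z$. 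The separation hypothesis $gU\cap U\neq\emptyset\Rightarrow gU=U$ is used here exactly as you anticipate, to make each $h_i$ equivariant, but the combination of local data into a global collar is by composing homeomorphisms of $Z$, not by averaging collar maps. Your steps (1) and (2), invoking \ref{reg2} and \ref{commonsubdivision} to set up a common $G$-triangulation, are in the right spirit and do appear in the construction of each $h_i$, but without the external-collar trick the argument does not close.
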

\begin{proof} The proof is essentially the same as that of the non-equivariant version, Theorem 2.25 in
\cite{RS82}, substituting local $G$--collars for local collars. 

Construct a new $G$--polyhedron $Z := X \cup Y \times [-1,0]$ by attaching a $G$--collar to $Y$ outside of $X$, identifying $Y \subset X$ with $Y \times \{0\}$.  We will construct a $G$--homeomorphism between $X$ and $Z$ which carries $Y$ to $Y \times \{-1\}$.  Then the preimage of $Y \times [-1,0]$ will be a $G$--collar on $Y$ in $X$.  

For each $a \in Y$, let $GV_a \times I$ be a local $G$--collar at $a$.  Using compactness, cover $Y$ with the interiors of finitely many $GV_{a_1}, \ldots, GV_{a_k}$.  Then for each $i$, we will define a $G$--homeomorphism $h_i \colon Z \rightarrow Z$ which maps the interior of $V_{a_i} \times \{0\}$ into the interior of $V_{a_i} \times [-1,0]$ and is the identity outside of $GV_{a_i} \times [-1,1]$.  To do this, we consider a $G$--triangulation $K_{a_i}$ of $GV_{a_i}$.  Taking the product of this triangulation with an interval defines a regular $G$--cell structure on $GV_{a_i} \times [-1,1]$.  We now equivariantly subdivide this cell complex by its order complex:  We must choose a new vertex $v_c$ in the interior of each cell $c$.  First, equivariantly select a point $y_{\sigma}$ in the interior of each simplex $\sigma \in K_{a_i}$.  For the cell $|\sigma| \times \{1\}$, choose $v_c = (y_{\sigma},1)$.  Likewise, for the cell $|\sigma| \times \{-1\}$, choose $v_c = (y_{\sigma},-1)$.  Finally, for the cell $|\sigma| \times [-1,1]$, choose $v_c = (y_{\sigma},0)$.  

Define a different subdivision simply by moving $v_c$ to $(y_{\sigma}, -\frac{1}{2})$ when $c$ intersects the relative interior of $GV_{a_i} \times [0,1]$ and making no change in the placement for $v_c$ otherwise.  Then the $G$--homeomorphism $h_i$ is given by mapping the first subdivision to the second, since they are both realizations of the same order complex.  Note that $h_i$ is the identity except on the relative interior of $GV_{a_i} \times [-1,1]$ where $h_i(y,t) = (y,s)$ with $s < t$.  Thus, $h_i$ maps all of the relative interior of $GV_{a_i} \times \{0\}$ into $GV_{a_i} \times (-1,0)$.

\begin{figure}
\includegraphics[scale=.7]{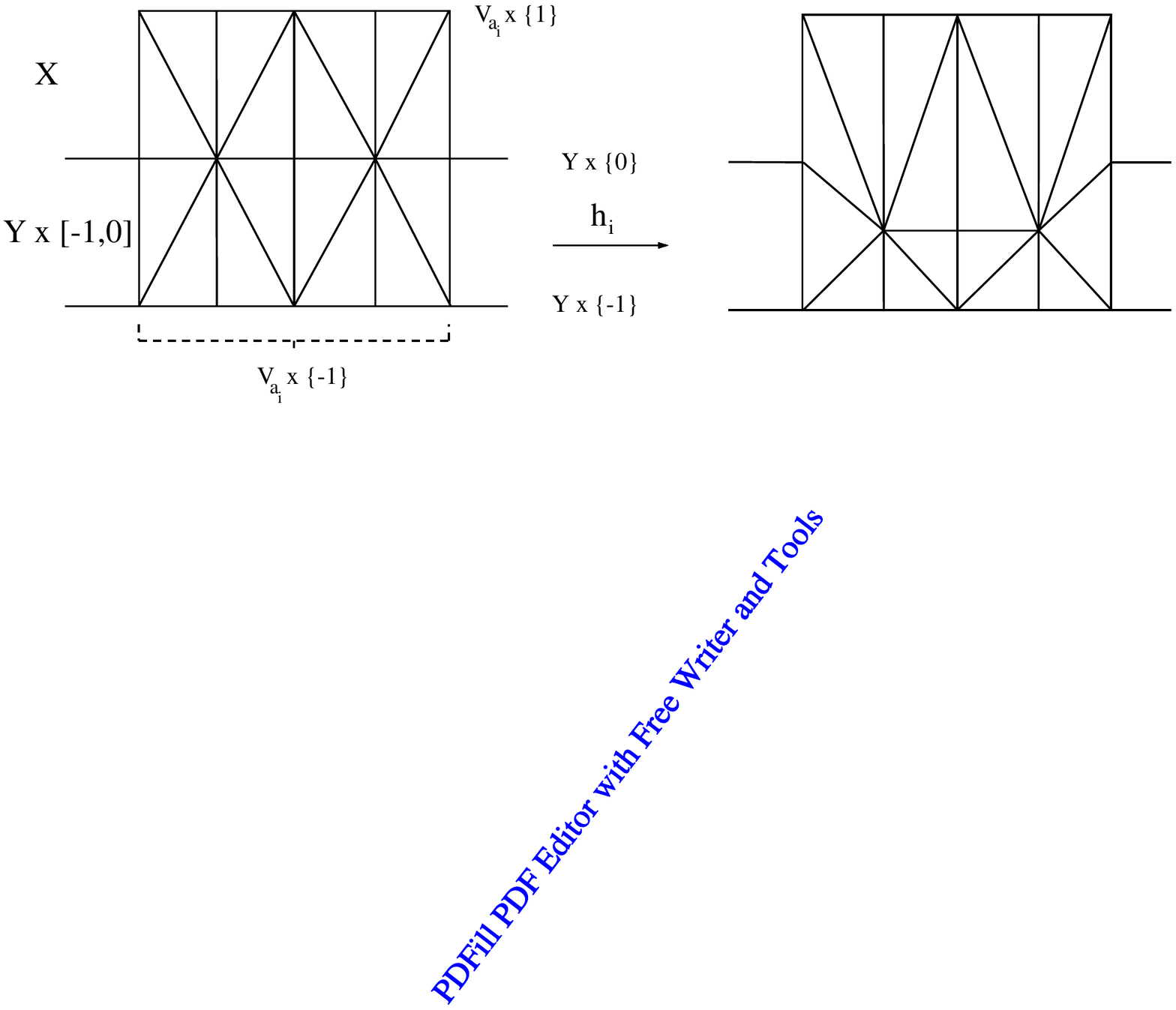}
\caption{Construction of $h_i$ \label{fig-Gcollar}}
\end{figure}

Now define $h$ to be the composition $h_k \circ \ldots \circ h_1$. Since the interiors of the local $G$--collars cover all of $Y$, $h$ maps all of $Y \times \{0\}$ into $Y \times (-1,0)$.  Let $K$ $G$--triangulate $Y$, and thus also $h(Y \times \{0\})$.  Then we consider $h(X) \cap [-1,0]$.  It has a regular cellular $G$--structure with a face poset isomorphic to that of $|K| \times [-1,0]$.  The cells in the former come in three types: simplices of $K$ triangulating $Y \times \{0\}$ (which correspond to the same in the latter complex), simplices of $K$ triangulating $h(Y \times \{0\})$ (which correspond to simplices of $K$ triangulating $Y \times \{-1\}$), and the intersection of $h(X)$ with cells $|\sigma| \times [-1,0]$ for $\sigma \in K$ (which correspond to the cells $|\sigma| \times [-1,0]$).  Therefore, $h(X) \cap [-1,0]$ is $G$--homeomorphic to $Y \times [-1,0]$, fixing $Y \times \{0\}$.  We extend this homeomorphism by the identity to the rest of $X$ to get the desired $G$--homeomorphism $\tilde{h} \colon X \rightarrow Z$ carrying $Y$ to $Y \times \{-1\}$.
\end{proof}

\begin{theorem}\label{reg7}
If $Y \subset X$ is locally $G$--collarable then a $G$--regular
neighborhood of Y in X is a $G$--collar.
\end{theorem}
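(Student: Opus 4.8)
The plan is to combine the uniqueness of $G$--regular neighborhoods (Theorem \ref{reg4}) with the existence of a $G$--collar (Theorem \ref{reg5}), exactly paralleling the non-equivariant argument (Corollary 3.9 or the analogous statement in \cite{RS82}). The idea is that a $G$--collar $c\colon Y \times I \to_G X$ furnishes, essentially for free, one particular $G$--regular neighborhood of $Y$ in $X$, namely (a slight thickening of) the image $c(Y \times [0,1])$; then any other $G$--regular neighborhood is carried to this one by a $G$--homeomorphism of $X$ fixing $Y$, and hence is itself a $G$--collar.

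First I would invoke Theorem \ref{reg5}: since $Y \subset X$ is locally $G$--collarable, there is a $G$--collar $c\colon Y \times I \to_G X$ with $c(y,0)=y$ and $c(Y\times[0,1))$ open in $X$. Next I would check that $N_2 := c(Y \times [0,\tfrac12])$ is a $G$--regular neighborhood of $Y$ in $X$. To see this, pick a $G$--triangulation $K$ of $Y$; then $\Delta(F \times \{0,1\})$ as in Proposition \ref{reg1} gives an admissible $G$--triangulation of $Y \times I$ with $Y \times \{0\}$ triangulated by $L \cong K$ as a full subcomplex, and transporting this triangulation along $c$ (and extending to a $G$--triangulation of a neighborhood of $c(Y\times I)$ in $X$ via Corollary \ref{reg2}), one sees that a derived subdivision near $L$ with new vertices placed at level $\tfrac12$ realizes $N_2 = |N_{K'}(L)|$ as a $G$--regular neighborhood. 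The fullness of $L$ and the boundary condition $|\dot N_K(L)| = \partial |N_K(L)|$ are immediate from the product structure $Y \times I$.

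Now let $N_1$ be any $G$--regular neighborhood of $Y$ in $X$. By Theorem \ref{reg4} there is a $G$--homeomorphism $h\colon X \to X$ with $h(N_1) = N_2$ and $h|_Y = \mathrm{id}_Y$. Then $h^{-1}\circ c\colon Y \times I \to_G X$ restricted to $Y \times [0,\tfrac12]$ (rescaled back to $Y \times I$ by the obvious $G$--equivariant homeomorphism $I \to [0,\tfrac12]$, which is the identity on $Y$ at level $0$) is a PL $G$--embedding onto $N_1$, sending $(y,0)$ to $y$ and an open sub-collar onto an open neighborhood of $Y$; that is, it exhibits $N_1$ as a $G$--collar. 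The main obstacle, such as it is, is the bookkeeping in the second step: verifying carefully that the product triangulation of $Y \times I$ pushed forward by the collar, together with an admissible $G$--triangulation of the surrounding part of $X$ supplied by Corollary \ref{reg2}, genuinely satisfies the definition of a $G$--regular neighborhood (fullness, the $|\dot N_K(L)| = \partial|N_K(L)|$ condition, and equivariance of the derived vertices). Once that is in hand, the conclusion is a direct application of Theorems \ref{reg4} and \ref{reg5}.
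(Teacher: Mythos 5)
Your proposal is correct and follows essentially the same route as the paper's proof: invoke Theorem \ref{reg5} to get a $G$--collar, triangulate $Y\times I$ by the product triangulation of Proposition \ref{reg1}, take a derived subdivision near $L=Y\times\{0\}$ with new vertices at level $\tfrac12$ to exhibit $Y\times[0,\tfrac12]$ as a $G$--regular neighborhood, and finish with the uniqueness Theorem \ref{reg4}. The only cosmetic difference is that you spell out the final collar embedding $h^{-1}\circ c$ (and mention an unnecessary but harmless extension via Corollary \ref{reg2}), whereas the paper leaves that last step implicit.
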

\begin{proof} We have by \ref{reg5} that $Y$ is $G$--collarable.  Let $L = L
\times {0}$ be an admissible $G$--triangulation of $Y$. Let $K$ be
the $G$--triangulation of the $G$--collar $Y \times I$ as in
\ref{reg1}. Now choose a $G$--derived subdivision $K^{\prime}$ of
$K$ near $L$ such that all of the new vertices have
second coordinate $\frac{1}{2}$. Then $|N(L,K^{\prime})|
= Y \times [0,\frac{1}{2}] \approx_G Y \times I$.  The result now
follows from \ref{reg4}.
\end{proof}

We will also make use of the notion of bicollarability.  We say $Y \subset X$ is \textbf{$G$--bicollarable} in $X$ if there exists a $G$--embedding of $Y \times [-1,1]$ into $X$ with $(y,0) \mapsto y$ for all $y \in Y$ and $Y \times (-1,1)$ mapping to an open neighborhood of $Y$ in $X$.

\begin{theorem}\label{regnbhd-bicollar}
If $N = |N_{K^{\prime}}(L)|$  is a $G$--regular neighborhood of $Y$ in $X$, then $|\dot{N}_{K^{\prime}}(L)|$ is $G$--bicollarable in $X$.
\end{theorem}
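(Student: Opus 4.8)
The plan is to exhibit an explicit $G$--bicollar by combining two one-sided collars coming from the regular neighborhood structure. Recall $N = |N_{K'}(L)|$ with $L$ full in $K$ and $|\dot N_{K'}(L)| = \partial |N_{K'}(L)|$, so the boundary polyhedron $B := |\dot N_{K'}(L)|$ separates $X$ locally into the part inside $N$ and the part outside. The first step is to produce the ``inward'' collar: by the combinatorial description of the derived subdivision, every simplex of $N_{K'}(L)$ meeting $B$ is of the form $\sigma \cup \{v_{\tau_1},\dots,v_{\tau_m}\}$ with $\sigma \in L$ and $\sigma < \tau_1 < \dots < \tau_m$, and the faces lying entirely in $B$ are exactly the $\{v_{\tau_1},\dots,v_{\tau_m}\}$. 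Using the function $f = f_{L,K}$ from the proof of Lemma~\ref{reg3} (extended to $|K|$), $B = f^{-1}(\epsilon)$ and $N = f^{-1}[0,\epsilon]$, and the radial structure of the derived subdivision near $B$ shows $N \setminus L$ (the half-open collar neighborhood of $B$ inside $N$) is $G$--homeomorphic to $B \times (0,1]$ with $B = B \times \{1\}$; more precisely, each point of $N_{K'}(L) \setminus |L|$ lies on a unique segment from a point of $|L|$-side to a point of $B$, giving $B \times [0,1] \hookrightarrow N$ with $B \times \{0\} \mapsto B$ mapping into the interior of $N$. Since $f$ is $G$--invariant and all derived vertices were chosen $G$--equivariantly, this collar is a $G$--embedding. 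This is exactly a $G$--collar on $Y' := B$ ``into $N$'' in the sense of Theorem~\ref{reg5}, but here we get it directly without invoking local collarability, because the derived-subdivision geometry is explicit.

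The second step is to produce the ``outward'' collar, i.e.\ a $G$--collar on $B$ in $\overline{X \setminus N}$. For this I would apply Theorem~\ref{reg5} to the pair $B \subset \overline{X \setminus N}$ after checking local $G$--collarability. Local $G$--collarability at a point $a \in B$ follows again from the derived-subdivision picture: take $K'$-simplices containing $a$, and observe that $\st_{K'}(a) \cap \overline{X\setminus N}$ is $G_a$--homeomorphic to $(\lnk \cap \overline{X\setminus N}) \times I$ because the closed star of any vertex in a simplicial complex is a cone and $B$ meets it in a subcone splitting off an interval factor transverse to $B$; the stabilizer condition ($gU \cap U \neq \emptyset \Rightarrow ga = a, gU = U$) holds by passing to a sufficiently small $G$--invariant neighborhood, using that $K'$ is admissible and finite. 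Then Theorem~\ref{reg5} gives the outward $G$--collar $B \times [0,1] \hookrightarrow \overline{X\setminus N}$ with $B \times \{0\} \mapsto B$.

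The third and final step is to glue: reparametrize the inward collar as $B \times [-1,0]$ and the outward collar as $B \times [0,1]$, both sending $B \times \{0\}$ to $B$ by the identity, and paste along $B$. Because the two collars agree (as the identity) on $B$ and each is a $G$--embedding with image a neighborhood of $B$ on its respective side of $B$ in $X$, the union is a $G$--embedding of $B \times [-1,1]$ into $X$ with $(y,0)\mapsto y$ and $B \times (-1,1)$ an open neighborhood of $B$ in $X$ — i.e.\ $B = |\dot N_{K'}(L)|$ is $G$--bicollarable. I expect the main obstacle to be the second step: verifying local $G$--collarability of $B$ in $\overline{X\setminus N}$ cleanly, in particular pinning down the neighborhoods $U, V$ and the stabilizer condition from the definition, since $\overline{X\setminus N}$ is not itself presented with a tidy derived-subdivision structure the way $N$ is; one must be careful that the transverse interval factor is chosen $G_a$--equivariantly and that the local collars patch under the hypotheses of Theorem~\ref{reg5}. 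Everything else is a direct unwinding of the combinatorial definition of $K'$ together with the $G$--invariance built into the choice of derived vertices.
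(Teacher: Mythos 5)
Your plan --- produce one-sided collars on either side of $B := |\dot N_{K'}(L)|$ and glue --- is workable in outline, and you rightly locate the source of the collar in the derived-subdivision geometry, but the paper handles both sides at once with a single symmetric construction that sidesteps the difficulties you run into.

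After reducing via Theorem \ref{reg4} to the case $N = f_{L,K}^{-1}[0,\epsilon]$, the paper picks $\epsilon_1 < \epsilon < \epsilon_2$ and chooses, equivariantly, alternate derived vertices $v^1_\tau \in f^{-1}(\epsilon_1)$ and $v^2_\tau \in f^{-1}(\epsilon_2)$, giving derived $G$--subdivisions $K_1'$, $K_2'$ and simplicial $G$--isomorphisms $h_i\colon |\dot N_{K'}(L)| \rightarrow_G |\dot N_{K_i'}(L)|$, $v_\tau \mapsto v^i_\tau$. The bicollar is then the single piecewise-linear map
\[
C(x,t)=\begin{cases} |t|\,h_1(x)+(1-|t|)\,x, & -1\le t\le 0,\\ t\,h_2(x)+(1-t)\,x, & 0\le t\le 1,\end{cases}
\]
onto $cl(|N_{K_2'}(L)|\setminus|N_{K_1'}(L)|)$. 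In particular the outward collar ($t\ge 0$) is produced by exactly the same mechanism as the inward one ($t\le 0$): re-choose the derived vertices at a higher level.

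This is where your proposal has its genuine gap, which you flag yourself: for the outward side you reach for Theorem \ref{reg5} and local $G$--collarability of $B$ in $cl(X\setminus N)$, but your verification --- that $\st_{K'}(a)\cap cl(X\setminus N)$ splits as a product ``because the closed star is a cone and $B$ meets it in a subcone splitting off an interval factor'' --- does not follow from the cone structure alone, is only phrased at vertices $a\in (K')^0$ rather than at arbitrary $a\in B$, and does not actually pin down the neighborhoods $U,V$ or the stabilizer condition required by the definition of local $G$--collarability; carrying this out would take essentially as much work as building the bicollar directly. Your inward collar also needs a correction: the ``unique segment from $|L|$-side to $B$'' picture is a join/mapping-cylinder parametrization of $N$, not an embedding of $B\times[0,1]$ into $N$, since the segments collapse as one approaches $|L|$. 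One must cut off near $B$, i.e.\ restrict to $f^{-1}[\epsilon_1,\epsilon]$, which is precisely what the interpolation to $h_1$ supplies. Noticing the symmetry between the two sides would have led you directly to the paper's shorter proof.
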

\begin{proof}
By \ref{reg4}, it suffices to consider the case $N = f_{L,K}^{-1}[0,\epsilon]$ for some $\epsilon \in (0,1)$.  That is, the derived vertices $\{v_{\tau}\}$ of $K^{\prime}$ were chosen in $f^{-1}(\epsilon)$.  Let $0 < \epsilon_1 < \epsilon < \epsilon_2 < 1$.  Equivariantly, choose alternate derived vertices $\{v^1_{\tau}\}$ and $\{v^2_{\tau}\}$ in $f^{-1}(\epsilon_1)$ and $f^{-1}(\epsilon_2)$ respectively, giving derived $G$--subdivisions $K_1^{\prime}$ and $K_2^{\prime}$ of $K$ near $L$.  Then there are the natural homeomorphisms $h_i \colon |\dot{N}_{K^{\prime}}(L)| \rightarrow_G |\dot{N}_{K_i^{\prime}}(L)|$ given by sending each $v_{\tau}$ to $v^i_{\tau}$.  We now define a $G$--bicollar $C \colon |\dot{N}_{K^{\prime}}(L)| \times [-1, 1] \rightarrow_G cl( |N_{K_2^{\prime}}(L)| \setminus  |N_{K_1^{\prime}}(L)|)$ by setting
\begin{equation*}
C(x,t) = \left\{\begin{array}{lr}
|t|h_1(x) + (1 - |t|)x, & -1 \leq t \leq 0 \\
th_2(x) + (1 - t)x , & 0 \leq t \leq 1
\end{array}\right.
\end{equation*}\end{proof}

Note that this $G$--bicollarability can alternatively be expressed as $|\dot{N}_{K^{\prime}}(L)|$ being $G$--collarable in both $N$ and in $cl(X \setminus N)$.

For the remainder of the section, we wish to consider $G$--regular neighborhoods within manifolds.  For that purpose, we need to define a $G$--manifold.  We will consider $G$--polyhedra and $G$--complexes that are manifolds and that have particularly well-behaved $G$--actions.

First, consider an orthogonal representation $\rho \colon G \rightarrow O_n(\R)$.  We denote by $S(\rho)$ and $D(\rho)$ the unit sphere and disk respectively in the corresponding representation space.  Further, denote by $S_+(\rho)$ the hemisphere with last coordinate nonnegative and similarly for $D_+(\rho)$.  These have unique piecewise-linear structures coming from their smooth structures \cite{I00}.

We now inductively define a \textbf{combinatorial $G$--sphere}.  $S^0$ with a $G$--action is a combinatorial $0$--dimensional $G$--sphere.  An admissible simplicial $G$--complex $K$ with $|K|$ (PL) $G$--homeomorphic to $S(\rho)$ for some $\rho \colon G \rightarrow O_{n+1}(\R)$ is an $n$--dimensional combinatorial $G$--sphere if for every $v \in K^0$, $\lnk_K(v)$ is an $(n-1)$--dimensional combinatorial $G_v$--sphere, itself $G_v$--homeomorphic to $S(\R v^{\perp})$, where $\R v^{\perp}$ is the orthogonal complement in $\rho|_{G_v}$ of the trivial representation $\R v$.

Similarly, we may define a \textbf{combinatorial $G$--hemisphere} by substituting $S_+(\rho)$ and allowing links of vertices to be $n$--dimensional $G$--spheres or $G$--hemispheres in the above definition.  Finally, a \textbf{combinatorial $G$--disk} is simply the cone on a $G$--sphere or $G$--hemisphere with a $G$--fixed point.

An admissible simplicial $G$--complex $K$ is an $n$--dimensional \textbf{combinatorial $G$--manifold} if for every $v \in K^0$, $\lnk_K(v)$ is an $(n-1)$--dimensional combinatorial $G_v$--sphere or hemisphere. A $G$--polyhedron $M$ is an $n$--dimensional \textbf{(PL) $G$--manifold} (with boundary) if its admissible $G$--triangulations are $n$--dimensional combinatorial $G$--manifolds.  (If it is true for one, it is true for all.)  The boundary $\partial M$ is easily shown to be a $G$--submanifold.

\begin{prop}\label{submanifold}
Let $M$ be an $n$--dimensional $G$--manifold and $M_1$ be an $n$--dimensional $G$--invariant submanifold with $cl(\partial M_1 \cap \text{int } M)$ $G$--bicollarable in $M$.  Then $M_1$ is a $G$--manifold.
\end{prop}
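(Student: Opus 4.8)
The plan is to verify the definition of a $G$--manifold directly: produce a single admissible $G$--triangulation of $M_1$ whose vertex links are all combinatorial $G_v$--spheres or hemispheres. Write $Y := \mathrm{cl}(\partial M_1 \cap \mathrm{int}\, M)$ and let $e\colon Y \times [-1,1] \hookrightarrow M$ be the $G$--bicollar given by hypothesis, with image a neighborhood of $Y$ in $M$. First I would choose a nonequivariant triangulation of a neighborhood of $M_1$ in $M$ in which the subpolyhedra $M_1$, $Y$, $\partial M$ and $e(Y \times [-1,1])$ are all subcomplexes (Proposition \ref{commonsubdivision} makes such common refinements available), and then apply Corollary \ref{reg2} to pass to an admissible $G$--triangulation $K$ subdividing it; subdivision preserves each of the listed subcomplexes. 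Let $K_1$ be the subcomplex of $K$ consisting of the simplices contained in $M_1$; it is an admissible $G$--triangulation of $M_1$, so it suffices to check its vertex links. Two preliminary remarks: since $\dim M_1 = \dim M$, a manifold-interior vertex $v$ of $K_1$ cannot lie on $\partial M$ (its link, an $(n-1)$--sphere, cannot be a subcomplex of the $(n-1)$--ball $\lnk_K(v)$), so each vertex of $K_1$ lies in exactly one of $\mathrm{int}\, M_1$, $\partial M_1 \cap \mathrm{int}\, M$, $\partial M_1 \cap \partial M$; and near any point of $Y$ the submanifold $M_1$ is exactly one of the two local sides of $Y$, which by $G$--invariance of $M_1$ and $G$--equivariance of $e$ we may take to be $e(Y \times [-1,0])$ throughout.

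For a vertex $v \in \mathrm{int}\, M_1$, a neighborhood of $v$ in $M_1$ is open in $M$, so $\lnk_{K_1}(v) = \lnk_K(v)$; this is a combinatorial $G_v$--sphere because $M$ is a $G$--manifold, with the witnessing orthogonal representation of $G_v$ inherited from $M$. For a vertex $v$ on $Y$ (the two remaining classes), restricting the bicollar to the star of $v$ as in the proof of Lemma \ref{links} exhibits $\lnk_Y(v)$ as a $G_v$--invariant, $G_v$--bicollared subcomplex of codimension one in $\lnk_K(v)$, with $\lnk_{K_1}(v)$ the closure of one of its two complementary regions. Now $\lnk_K(v)$ is a combinatorial $G_v$--sphere $S(\rho)$ (if $v \in \mathrm{int}\, M$) or hemisphere $S_+(\rho)$ (if $v \in \partial M$) for a suitable orthogonal $\rho$, and $\lnk_Y(v)$ is a separating subsphere, respectively a properly embedded subdisk meeting $\partial S_+(\rho)$. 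What remains is the claim that cutting such a model along an equivariantly bicollared, separating, codimension-one subcomplex yields on each side a combinatorial $G_v$--hemisphere, with the orthogonal representation on the new link read off from $\rho$ and the bicollar normal direction. I would prove this by induction on dimension: the $G_v$--bicollar lets one excise a product neighborhood $\lnk_Y(v) \times [-1,1]$ and reglue a collar onto the remainder on the $M_1$--side, and the equivariant collaring and regular-neighborhood results of Section 4 --- Theorems \ref{reg5}, \ref{reg7}, \ref{regnbhd-bicollar}, together with the uniqueness Theorem \ref{reg4} --- identify the outcome with the standard combinatorial $G_v$--hemisphere; the inductive hypothesis handles the links occurring inside $\lnk_Y(v)$ and along the cut, which is exactly the recursive content of the definitions.

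The main obstacle is this cutting step. The underlying topological fact --- a bicollared codimension-one sphere splits a sphere into two balls --- is classical, but here it must be carried out equivariantly in the PL category and, crucially, with control of the \emph{orthogonal} $G_v$--representation on each piece; without that control one only obtains that $\lnk_{K_1}(v)$ is some PL ball or sphere carrying a PL $G_v$--action, which is strictly weaker than being a combinatorial $G_v$--hemisphere in the sense of the paper. The most delicate case is $v \in \partial M_1 \cap \partial M$ --- which is in $Y$ unless $M_1 = M$ near $v$, a situation handled by the identity $\lnk_{K_1}(v) = \lnk_K(v)$ --- where one must keep track of the collar of $\partial M$ and the bicollar of $Y$ through the same vertex simultaneously. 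Everything else (existence of the adapted $G$--triangulation $K$, the dimension-count bookkeeping, and the identification $\lnk_{K_1}(v) = \lnk_K(v)$ at interior vertices) is routine given Section 4.
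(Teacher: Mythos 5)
You set up the proof correctly --- reduce to vertex links, observe that $\lnk_{K_1}(v)=\lnk_K(v)$ away from $Y := \mathrm{cl}(\partial M_1 \cap \mathrm{int}\,M)$, and isolate the crucial issue: the codimension-one $\lnk_Y(v)\subset\lnk_K(v)$ must cut off a combinatorial $G_v$--hemisphere \emph{in the orthogonal sense of the paper}, not merely a PL ball carrying a $G_v$--action. But you then label this cutting step ``the main obstacle'' and offer only a sketch (``excise a product neighborhood $\lnk_Y(v)\times[-1,1]$ and reglue a collar... identify the outcome with the standard combinatorial $G_v$--hemisphere''). That identification is precisely what is missing: regular-neighborhood uniqueness (Theorem \ref{reg4}) compares $G$--regular neighborhoods with each other, and nothing in the excise-and-reglue plan produces the orthogonal model $S_+(\rho)$ or controls what $\rho$ is. As you yourself note, this is where the proof is hard, and deferring it to an unspecified induction leaves a genuine gap.

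The paper closes this gap by never performing an abstract cut at all. It uses the bicollar to exhibit a closed $G_v$--invariant product neighborhood $U_v = |\st_L(v)|\times[-1,1]$ of $v$, with the $M_1$--side equal to $|\st_L(v)|\times[0,1]$, and triangulates $U_v$ as $J * \{v\}$ where $|J|$ is the frontier $|\lnk_L(v)|\times[-1,1]\cup|\st_L(v)|\times\{\pm 1\}$. Subdividing $K$ to contain $J*\{v\}$ and taking a $G_v$--derived subdivision near $v$, Lemma \ref{links} identifies $|\lnk_{K'}(v)|$ with $|J|$. Now $|J|$ is \emph{manifestly} the suspension of $|\lnk_L(v)|$ with the two $G_v$--fixed poles $(v,\pm 1)$; since $|J|$ is a combinatorial $G_v$--sphere or hemisphere (because $M$ is a $G$--manifold), the recursion in the definition applied to the pole $(v,1)$ shows $|\lnk_L(v)|$ is itself a $G_v$--sphere or hemisphere and the suspension coordinate is a trivial subrepresentation. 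The $M_1$--side $|\lnk_L(v)|\times[0,1]\cup|\st_L(v)|\times\{1\}$ is then the closed upper half of the suspension --- $S_+(\rho)$ on the nose, or (when $v\in\partial M$) a quadrant with two nonnegative trivial coordinates, which the paper separately observes is $G_v$--homeomorphic to a hemisphere. This explicit prism-boundary/suspension realization of $\lnk_K(v)$, rather than an abstract equivariant Sch\"onflies-type argument, is the idea your proposal lacks.
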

\begin{proof}
Let $K$ be a $G$--triangulation of $M$ with subcomplexes $(K_1, L)$ triangulating $(M_1,\partial M_1)$.  We need to show that the link of any vertex $v \in K_1^0$ is a $G_v$--sphere or hemisphere.  We consider the case $v \in cl(\partial M_1 \cap \text{int } M)$.  The link of any other vertex of $K_1$ is the same in both $K$ and $K_1$.

Since $\partial M_1$ is $G$--bicollarable, we may consider a closed $G_v$--invariant neighborhood $|\st_L(v)| \times [-1,1] = U_v$ with $(x,0)$ identified with $x$ for all $x \in |\st_L(v)|$ and $|\st_{L_1}(v)| \times [0,1] \subset M_1$.  Triangulate $U_v$ in the following way:  First triangulate $|\lnk_L(v)| \times [-1,1]$.  Add to it by coning $|\lnk_L(v)| \times \{-1\}$ and $|\lnk_L(v)| \times \{1\}$ with $(v,-1)$ and $(v,1)$ respectively.  Let $J$ be this triangulation of $|\lnk_L(v)| \times [-1,1] \cup |\st_L(v)| \times \{-1,1\}$.  Then $J * \{v\}$ triangulates $|\st_L(v)| \times [-1,1]$.

Now subdivide $K$ so that it contains a subdivision of $J * \{v\}$ as a subcomplex.  Choose a derived $G_v$--subdivision $K^{\prime}$ near $v$.  From the construction in the proof of \ref{reg3}, we may assume $|\st_K^{\prime}(v)|$ is an $\epsilon$--neighborhood of $v$ in $|J * \{v\}|$.  Thus, by \ref{links}, $|\lnk_L(v)| \times [-1,1] \cup |\st_L(v)| \times \{-1,1\}$ is $G_v$--homeomorphic to $|\lnk_K^{\prime}(v)|$, which we know is an $(n-1)$--$G_v$--sphere or hemisphere since $M$ is a $G$--manifold.

Consider the point $w = (v,1)$.  Its stabilizer in $G_v$ is all of $G_v$.  Hence, $\lnk_J(w)$ is an $(n-2)$--$G_v$--sphere or hemisphere.  But $|\lnk_J(w)|$ is $G_v$--homeomorphic to $|\lnk_L(v)|$, giving us that $|J|$ is the suspension of $|\lnk_L(v)|$ by $w$ and $(v,-1)$, and thus a $G_v$--sphere or hemisphere.  In conclusion, $|\lnk_L(v)| \times [0,1] \cup |\st_L(v)| \times \{1\}$, which is $G_v$--homeomorphic to $|\lnk_{K_1}(v)|$, is a $G_v$--hemisphere.  (Note that when $\lnk_L(v)|$ is a $G_v$--hemisphere, $|\lnk_L(v)| \times [0,1] \cup |\st_L(v)| \times \{1\}$ is actually a quadrant of a $G_v$--sphere where the two nonnegative coordinates give trivial subrepresentations.  This is easily seen to be $G_v$--homeomorphic to a $G_v$--hemisphere.)
\end{proof}

Now assume for the remainder of the section that $Y \subset M$ is
a polyhedron and $M$ is an $n$--$G$--manifold.  The following is a direct result of \ref{submanifold}, \ref{regnbhd-bicollar}, and the non-equivariant Proposition 3.10 from \cite{RS82}

\begin{prop}\label{reg8}
A $G$--regular neighborhood N of Y in M is a $G$--manifold with boundary. If
$Y \subset$ int M, $\partial N = |\dot{N}_{K^{\prime}}(L)|$ where L
and $K^{\prime}$ are as in the definition of regular neighborhood.
\end{prop}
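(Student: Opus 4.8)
The plan is to deduce both assertions from facts already in hand: the bicollarability of the frontier of a $G$--regular neighborhood (Theorem \ref{regnbhd-bicollar}), the submanifold criterion (Proposition \ref{submanifold}), and the non-equivariant Proposition 3.10 of \cite{RS82}. Write $N = |N_{K'}(L)|$, where $(K,L)$ is an admissible $G$--triangulation of a neighborhood of $Y$ in $M$ with $L$ full, $|\dot N_K(L)| = \partial|N_K(L)|$, and $K'$ is a $G$--derived subdivision of $K$ near $L$ with the new vertices chosen $G$--invariantly; by Theorem \ref{reg4} we may, and do, take $K'$ so that $N = f_{L,K}^{-1}[0,\epsilon]$ for some $\epsilon \in (0,1)$, so that $\partial|N_{K'}(L)| = |\dot N_{K'}(L)| = f^{-1}(\epsilon)$.

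First I would establish that $N$ is an $n$--dimensional $G$--invariant submanifold of $M$. Non-equivariantly this is exactly Proposition 3.10 of \cite{RS82}: $N_{K'}(L)$ is a subcomplex of the triangulation $K'$ of a neighborhood of $Y$ in $M$, its underlying space is an $n$--manifold with boundary, and when $Y \subset \operatorname{int} M$ the boundary is $|\dot N_{K'}(L)|$. The $G$--invariance of $N$ as a subset is immediate: $G$ permutes the simplices of $K$, fixes $f$ (since $L^0$ and $K^0\setminus L^0$ are each $G$--invariant, as $L$ is a $G$--subcomplex), and the derived vertices were chosen $G$--invariantly, so $G$ permutes the simplices of $N_{K'}(L)$. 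Hence $cl(\partial N \cap \operatorname{int} M)$ — which is all of $\partial N = |\dot N_{K'}(L)|$ when $Y \subset \operatorname{int} M$, and in general the part of the frontier of $N$ interior to $M$ — is $G$--bicollarable in $M$ by Theorem \ref{regnbhd-bicollar}. Applying Proposition \ref{submanifold} with $M_1 = N$ then gives that $N$ is a $G$--manifold; its boundary is the union of $|\dot N_{K'}(L)|$ with (the closure of) $N \cap \partial M$, and it reduces to $|\dot N_{K'}(L)|$ precisely when $Y \subset \operatorname{int} M$, by the non-equivariant statement. So the $G$--manifold-with-boundary assertion and the identification of $\partial N$ both follow.

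The one point requiring care — and the main obstacle — is verifying the hypothesis of Proposition \ref{submanifold}, namely that $cl(\partial N \cap \operatorname{int} M)$ is $G$--bicollarable in $M$ rather than merely in $N \cup (cl(X\setminus N))$ for the ambient $X$ appearing in Theorem \ref{regnbhd-bicollar}. Since $N$ is a neighborhood of $Y$ in $M$, a neighborhood of any point of $|\dot N_{K'}(L)| \cap \operatorname{int} M$ in $M$ agrees with its neighborhood in the polyhedron $|K'|$, so the bicollar produced in Theorem \ref{regnbhd-bicollar} (which lives inside $cl(|N_{K_2'}(L)| \setminus |N_{K_1'}(L)|) \subset |K'|$) is an honest bicollar in $M$ near those points; at points of $|\dot N_{K'}(L)| \cap \partial M$ the local model is instead a quadrant, handled exactly as in the parenthetical remark at the end of the proof of Proposition \ref{submanifold}. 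Once this local picture is in place, the two displayed conclusions are immediate, completing the proof.
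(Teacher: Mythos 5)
Your proposal is correct and follows exactly the route the paper has in mind: the paper states Proposition \ref{reg8} as a direct consequence of Proposition \ref{submanifold}, Theorem \ref{regnbhd-bicollar}, and the non-equivariant Proposition 3.10 of \cite{RS82}, and you assemble precisely these three ingredients (non-equivariant manifold-with-boundary structure plus $G$--invariance of $N$, then $G$--bicollarability of $|\dot N_{K'}(L)|$, then the submanifold criterion). The extra care you take in noting that the bicollar of Theorem \ref{regnbhd-bicollar} lives in $|K'|$ but extends to a bicollar in $M$ near interior points of $\partial N$ is a detail the paper leaves implicit, and your treatment of it is sound.
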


We omit the proofs of the following two results since they are exactly the same as the non-equivariant versions (Theorem 3.11 and Corollary 3.18) in \cite{RS82}, only utilizing $G$--bicollarability in place of collarability of boundaries of manifolds.

\begin{theorem}[Simplicial $G$--Neighborhood Theorem]\label{reg9}
Let Y be a $G$--polyhedron in int $M^n$ and N be a
$G$--neighborhood of Y in int $M^n$.  Then N is $G$--regular if and only
if
\begin{itemize}
\item[(i)]N is an $n$--manifold with boundary $\partial N$ $G$--bicollarable in $M$, \item[(ii)]there
    are admissible $G$--triangulations (K, L, J) of (N, Y,
    $\partial N$) with L full in K, $K = N_K(L)$ and $J =
    \dot{N}_K(L)$
\end{itemize}
\end{theorem}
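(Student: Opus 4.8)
The plan is to follow Rourke and Sanderson's proof of the Simplicial Neighbourhood Theorem (\cite{RS82}, Theorem 3.11) essentially verbatim, substituting for each non-equivariant ingredient the $G$--equivariant analogue proved above: common subdivisions (\ref{commonsubdivision}), equivariant triangulations (\ref{reg2}), matching derived subdivisions (\ref{reg3}), the $G$--collaring theorem (\ref{reg5}), the identification of $\partial N$ as $|\dot N_{K^{\prime}}(L)|$ and its $G$--bicollarability (\ref{reg8}, \ref{regnbhd-bicollar}), and the uniqueness of $G$--regular neighbourhoods (\ref{reg4}). The only genuinely new point is that at every stage the auxiliary triangulations, subdivisions, collars, and choices of derived vertices can be taken $G$--invariant; admissibility of the complexes involved, together with $G$--invariance of the relevant functions $f_{L,K}$, is what makes this possible. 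The two directions are then argued as follows.

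For the direction ``$G$--regular $\Rightarrow$ (i),(ii)'', I would start from $N = |N_{\bar K^{\prime}}(\bar L)|$, where $(\bar K,\bar L)$ is an admissible $G$--triangulation of a neighbourhood of $Y$ in $M$ with $\bar L$ full, $|\dot N_{\bar K}(\bar L)| = \partial|N_{\bar K}(\bar L)|$, and $\bar K^{\prime}$ a $G$--derived subdivision near $\bar L$. Proposition \ref{reg8} gives that $N$ is an $n$--manifold with $\partial N = |\dot N_{\bar K^{\prime}}(\bar L)|$, and Theorem \ref{regnbhd-bicollar} gives that $\partial N$ is $G$--bicollarable in $M$; this is (i). For (ii), I would set $K := N_{\bar K^{\prime}}(\bar L)$, $L := \bar L$, $J := \dot N_{\bar K^{\prime}}(\bar L)$ and verify the purely combinatorial identities $N_K(L) = K$ and $\dot N_K(L) = J$ (fullness of $L$ in $K$ being inherited from $\bar K^{\prime}$, hence $K$ is already in normal form). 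These identities are the standard fact that one derived subdivision makes a simplicial neighbourhood ``stable'', and the verification is combinatorial, so it is identical to \cite{RS82} and carries over to the $G$--complex $K$ with no change.

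For the converse ``(i),(ii) $\Rightarrow$ $G$--regular'', given admissible $G$--triangulations $(K,L,J)$ of $(N,Y,\partial N)$ with $L$ full, $K = N_K(L)$, $J = \dot N_K(L)$, I would use the $G$--bicollar of $\partial N$ in $M$ from (i) to attach a $G$--collar $\partial N \times [0,1]$ to $N$ inside $M$, and then, passing through \ref{commonsubdivision} and \ref{reg2}, extend $K$ to an admissible $G$--triangulation $\bar K$ of the enlarged $G$--neighbourhood $\bar N \supset N$ of $Y$ so that $\bar K$ contains a subdivision of $K$, has $\partial N$ as a subcomplex, and has all new simplices in the collar region. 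Since $L^0 \subset Y \subset \operatorname{int} N$ is disjoint from the collar and $\partial N$ separates the two sides, one gets $N_{\bar K}(L) = K$, hence $\dot N_{\bar K}(L) = J$, $|\dot N_{\bar K}(L)| = |J| = \partial N = \partial|N_{\bar K}(L)|$, and $L$ still full in $\bar K$. By definition, $|N_{\bar K^{\prime}}(L)|$ (for a $G$--derived subdivision $\bar K^{\prime}$ of $\bar K$ near $L$, chosen $G$--invariantly as in \ref{reg3}) is then a $G$--regular neighbourhood of $Y$; finally the region between $|N_{\bar K^{\prime}}(L)|$ and $\partial N$ inside $N$ is a $G$--collar on $|\dot N_{\bar K^{\prime}}(L)|$ (as in \ref{regnbhd-bicollar}), so sliding across it gives a $G$--homeomorphism of $M$ fixing $Y$ and carrying $|N_{\bar K^{\prime}}(L)|$ onto $N$, whence $N$ is $G$--regular by \ref{reg4}.

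I expect the main obstacle to be the converse direction, specifically the equivariant extension of $K$ across the $G$--bicollar: one must simultaneously keep the extension $\bar K$ admissible and $G$--invariant, keep $\partial N$ a subcomplex so that the combinatorial identities $N_{\bar K}(L) = K$, $\dot N_{\bar K}(L) = J$ survive the extension and subsequent subdivisions, and in the end match $|N_{\bar K^{\prime}}(L)|$ back to $N$ rel $Y$. The first two issues are resolved by \ref{reg2}, \ref{reg3}, and \ref{commonsubdivision} (which produce the required $G$--triangulations and $G$--derived subdivisions), and the last by the $G$--collar of \ref{reg5} together with the uniqueness statement \ref{reg4}; everything else is the same combinatorial book-keeping as in \cite{RS82}, Theorem 3.11.
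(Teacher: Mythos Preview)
Your proposal is correct and matches the paper's approach exactly: the paper explicitly omits the proof of Theorem~\ref{reg9}, stating only that it is ``exactly the same as the non-equivariant version (Theorem 3.11) in \cite{RS82}, only utilizing $G$--bicollarability in place of collarability of boundaries of manifolds.'' Your sketch does precisely this, invoking the equivariant ingredients (\ref{reg2}, \ref{reg3}, \ref{reg4}, \ref{reg5}, \ref{reg8}, \ref{regnbhd-bicollar}) at the appropriate points, so there is nothing to add.
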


\begin{cor}\label{reg10}
Suppose $N_1 \subset \text{int } N_2$ are two $G$--regular
neighborhoods of Y in int M.  Then cl($N_2 \setminus N_1$) is a
$G$--collar of $\dot{N}_1$.
\end{cor}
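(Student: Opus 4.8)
The plan is to transcribe the proof of Corollary~3.18 of \cite{RS82} into the equivariant setting, replacing each non-equivariant ingredient by the version established above; the underlying PL geometry is unchanged, so the real work is keeping the triangulations, derived subdivisions and collapses $G$--invariant and admissible. The key reduction, via equivariant uniqueness of regular neighborhoods (Theorem~\ref{reg4}), is to the case where $N_2$ is a simplicial $G$--neighborhood of $Y$ and $N_1$ is an associated derived $G$--neighborhood.

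First I would note that $N_1$ is also a $G$--regular neighborhood of $Y$ in $\text{int }N_2$: we have $Y\subset\text{int }N_1\subset\text{int }N_2\subset\text{int }M$, the characterization in the Simplicial $G$--Neighborhood Theorem~\ref{reg9} is local near $Y$ and near $\dot N_1$, and $\dot N_1$ is $G$--bicollarable in $N_2$ by Theorem~\ref{regnbhd-bicollar}. Then apply Theorem~\ref{reg9} to $N_2$ to fix an admissible $G$--triangulation $(K,L,J)$ of $(N_2,Y,\partial N_2)$ with $L$ full in $K$, $K=N_K(L)$ and $J=\dot N_K(L)$. Choosing equivariantly (as in Lemma~\ref{reg3}) a $G$--derived subdivision $K'$ of $K$ near $L$ with all new vertices on $f_{L,K}^{-1}(\epsilon)$ for small $\epsilon$, the standard model $N_1':=|N_{K'}(L)|=f_{L,K}^{-1}[0,\epsilon]$ is a $G$--regular neighborhood of $Y$ in $N_2$. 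By Theorem~\ref{reg4}, with ambient $G$--manifold $N_2$, there is a $G$--homeomorphism $\Phi\colon\thinspace N_2\to N_2$ that is the identity on $Y$ (and, as it moves only the derived vertices of simplices meeting $Y$, on $\partial N_2$ as well) with $\Phi(N_1)=N_1'$. Since $\Phi$ carries $cl(N_2\setminus N_1)$ onto $cl(N_2\setminus N_1')=f_{L,K}^{-1}[\epsilon,1]$ and $\dot N_1$ onto $\dot N_1'=f_{L,K}^{-1}(\epsilon)$, it suffices to prove that the region $f_{L,K}^{-1}[\epsilon,1]$ between the simplicial $G$--neighborhood $N_2=|N_K(L)|$ and its derived $G$--neighborhood $N_1'$ is a $G$--collar of $f_{L,K}^{-1}(\epsilon)$.

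For this last step I would use the combinatorial description of a derived neighborhood: $f_{L,K}^{-1}[\epsilon,1]$ is the realization of the full subcomplex of $K'$ on the vertex set $J^0\cup\{v_\tau\}$, and this collapses onto $J$; because $f_{L,K}$ and the derived vertices are $G$--invariant the collapse can be taken $G$--equivariant. By Proposition~\ref{reg8} (or Proposition~\ref{submanifold}) the region is an $n$--dimensional $G$--manifold whose boundary consists of $\dot N_1'=f_{L,K}^{-1}(\epsilon)$ and $\partial N_2=f_{L,K}^{-1}(1)$, and a PL $n$--manifold collapsing onto one of its boundary components is a collar of that component --- the non-equivariant content of \cite{RS82}, Corollary~3.18. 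Running that argument with $G$--invariant choices throughout (exactly as Theorems~\ref{reg7} and \ref{reg4} were deduced from their non-equivariant analogues) gives $f_{L,K}^{-1}[\epsilon,1]\approx_G\partial N_2\times I$, carrying $\dot N_1'$ to $\partial N_2\times\{1\}$, hence $f_{L,K}^{-1}[\epsilon,1]\approx_G\dot N_1'\times I$; applying $\Phi^{-1}$ presents $cl(N_2\setminus N_1)$ as a $G$--collar of $\dot N_1$. The main obstacle is precisely this last step --- the equivariant form of the fact that the region between a simplicial neighborhood and a derived neighborhood collapses onto, and is a collar of, its far boundary --- but it introduces no new difficulty beyond the equivariant collapsing and regular-neighborhood theory already assembled in this section.
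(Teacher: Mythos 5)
The paper itself gives no proof here: after Proposition~\ref{reg8} the authors state they omit the proofs of \ref{reg9} and \ref{reg10} because they are ``exactly the same as the non-equivariant versions\dots only utilizing $G$--bicollarability in place of collarability of boundaries of manifolds,'' so the benchmark is faithfulness to that recipe rather than to a written-out argument. Your overall strategy --- reduce by uniqueness (\ref{reg4}) to a standard model and exhibit the collar there --- is in the right spirit, and your reduction step is essentially sound (though the claim that $\Phi$ is the identity on $\partial N_2$ deserves a sentence tracing through the proof of \ref{reg4}, where only derived vertices inside simplices meeting $Y$ are moved; since $L$ is full and $Y\subset\text{int}\,N_2$, the original vertices and hence $|J|=\partial N_2$ are pointwise fixed).

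The gap is in your final step. You assert that ``a PL $n$--manifold collapsing onto one of its boundary components is a collar of that component'' and cite this as ``the non-equivariant content of \cite{RS82}, Corollary~3.18.'' But Corollary~3.18 of \cite{RS82} \emph{is} the non-equivariant version of the very statement you are proving --- that $\text{cl}(N_2\setminus N_1)$ is a collar of $\dot N_1$ --- not the manifold-collapsing-to-boundary lemma. As written this is circular (or at best a misattribution to a result you would still need to equivariantize from scratch, with none of the machinery in Section~4 directly covering ``collapse onto a boundary component $\Rightarrow$ collar''). To close the gap using only what the paper has built, either (a) observe that $f_{L,K}^{-1}[\epsilon,1]$ is precisely $|N_{K'}(J)|$ and that $K'$, being derived near $L$ in a complex with $K=N_K(L)$, is simultaneously derived near $J$, so $f^{-1}[\epsilon,1]$ is a $G$--regular neighborhood of $\partial N_2$ in $N_2$; since $\partial N_2$ is locally $G$--collarable (by \ref{regnbhd-bicollar} applied to $N_2\subset M$), Theorem~\ref{reg7} gives $f^{-1}[\epsilon,1]\approx_G\partial N_2\times I$, and flipping the interval (noting $\dot N_{K'}(L)=\dot N_{K'}(J)=f^{-1}(\epsilon)$) yields the desired $G$--collar of $\dot N_1'$; or (b) do not stop at the simplicial neighborhood $|K|$: apply \ref{reg4} once more to bring $N_2$ to $f^{-1}[0,\epsilon_2]$ with $\epsilon<\epsilon_2<1$, so the region becomes $f^{-1}[\epsilon,\epsilon_2]$, and then read off the $G$--collar from the explicit vertex-interpolation homeomorphism constructed in the proof of Theorem~\ref{regnbhd-bicollar}. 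Route~(b) is the one the paper's preamble (``$G$--bicollarability in place of collarability'') most directly points to.
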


Let $Y \subset X$ be polyhedra such that $X = Y \cup D^m$ and $Y
\cap D^m = D^{m-1}$ where $D^m$ and $D^{m-1}$ are disks of
dimensions $m \geq 1$ and $m - 1$ respectively and there is a PL
homeomorphism $D^m \rightarrow D^{m-1} \times I$ with $D^{m-1}$ mapping by the identity to
$D^{m-1} \times \{0\}$. Then we say there is an ($m$--dimensional) \textbf{elementary
collapse} of $X$ onto $Y$. If there is a sequence $X = X_0, X_1,
X_2, \ldots, X_k = Y$ of polyhedra such that there is an
elementary collapse of $X_{i-1}$ onto $X_i$, we say $X$
\textbf{collapses} to $Y$, or $X \searrow Y$. In particular, if $X
= |K|$ and $Y = |L|$ with $K$ collapsing simplicially to $L$, then
$X$ collapses to $Y$.  Note that the dimensions of the individual elementary collapses in a sequence are allowed to vary.  We say that a collapse is $m$--dimensional if every elementary step has dimension $\leq m$.

Consider now the case that $G$ is a finite group and $X$ and $Y$
are $G$--polyhedra with $X = Y \cup GD^m$ where $D^m$ (as well as
$gD^m$ for each $g \in G$) is a disk as in the definition of an
elementary collapse. Suppose we also have that: (1) $gD^m \neq D^m$
implies that $gD^m \cap D^m \subseteq Y$ and (2) there exists a point $y \in D^{m-1}$ fixed by the stabilizer $G_{D^m} = H$ such that $D^{m-1}$ is $H$--homeomorphic to a cone with apex $y$ on some $H$--complex, and (3) $D^m$ is $H$--homeomorphic to $D^{m-1} \times I$, then we say there is an \textbf{elementary $G$--collapse} from $X$ to $Y$.  (Note then that any point $x \in \{y\} \times (0,1]$ will have stabilizer $G_x = H$.)  A sequence of these is called a $G$--collapse, denoted $X \searrow_G Y$.

A simplicial collapse from admissible $K$ to $L$ gives rise to a
$G$--collapse if and only if whenever $(\sigma \lessdot \tau)$ is
in its corresponding Morse matching, so too is $(g\sigma \lessdot
g\tau) \; \forall \, g \in G$.

An elementary collapse from an n-manifold $M$ to another
n-manifold $M_1$ where $M = M_1 \cup D^n$ and $D^{n-1} \subset
\partial M_1$ and $D^{n} \setminus D^{n-1} \subset \partial M$ is
called an \textbf{elementary shelling}.  A sequence of such
collapses is a \textbf{shelling}.  Note that every elementary step must be $n$--dimensional.

The equivariant version of shelling requires some additional conditions.  Let $M_1 \subset M$ be $n$--manifolds with an elementary $G$--collapse from $M = M_1 \cup GD^n$ to $M_1$ such that: (1) $D^n \cap M_1 = D^{n-1}$ lies in a $G$--collarable subpolyhedron $W \subset \partial M_1$, (2) under the $G_{D^n}$--triangulation $K = \{y\} * L$ of $D^{n-1}$, if $gD^n \neq D^n$, then $gD^n \cap D^n \subset |L| \times \{0\}$, and (3) $y \in \partial D^{n-1}$ implies that, in the $G$--collar on $W$, $(\{y\} * \partial |L|) \times I \subset \partial M$.  If these three extra conditions are satisfied, this elementary $G$--collapse is called an \textbf{elementary $G$--shelling}.  A sequence of these is called a \textbf{$G$--shelling}.

\begin{lemma}\label{reg11}
If M $G$--shells to $M_1$, then there is a $G$--homeomorphism $h\colon\thinspace M
\rightarrow M_1$ which is the identity outside an arbitrary neighborhood of $M \setminus M_1$.
\end{lemma}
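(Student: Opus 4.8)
The plan is to reduce the $G$--shelling statement to the single elementary case and then patch the homeomorphisms together; the elementary case in turn reduces to a local model of a product collapse near the collar. First I would observe that it suffices to prove the statement when $M \searrow_G M_1$ is a single elementary $G$--shelling, say $M = M_1 \cup GD^n$ with $D^n \cap M_1 = D^{n-1} \subset W \subset \partial M_1$ and the three extra conditions of an elementary $G$--shelling in force. Indeed, if $M = M^{(0)} \searrow_G M^{(1)} \searrow_G \cdots \searrow_G M^{(k)} = M_1$, each $h^{(j)}\colon M^{(j-1)} \to M^{(j)}$ produced in the elementary case is the identity outside an arbitrarily small neighborhood of $M^{(j-1)} \setminus M^{(j)}$, and since the $M^{(j-1)} \setminus M^{(j)}$ may be taken mutually far apart (or by choosing the neighborhoods small enough at each stage), the composite $h^{(k)} \circ \cdots \circ h^{(1)}$ is a $G$--homeomorphism $M \to M_1$ that is the identity outside an arbitrarily prescribed neighborhood of $M \setminus M_1$. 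So the content is entirely in the elementary step.

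For the elementary step, the idea is the standard ``push the half-ball back onto the collar'' construction, made equivariant. Using condition (1), extend the given $G$--collar $W \times I$ of $W$ in $\partial M_1$ to a $G$--collar $W \times I \times I'$ inside $M_1$ of $W \times I$ (or directly invoke \ref{reg5}/Theorem \ref{reg7} to get a $G$--collar of $\partial M_1$ near $W$ in $M_1$), so that a $G$--invariant neighborhood of $GD^{n-1}$ in $M_1$ looks like $GD^{n-1} \times I$ with $GD^{n-1} = GD^{n-1} \times \{0\}$. Condition (3) guarantees this collar is compatible with $\partial M$ at the part of $D^{n-1}$ lying in $\partial D^{n-1}$. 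Now $M$ near $GD^n$ is $G$--homeomorphic, using condition (3) of the definition of elementary $G$--collapse together with condition (2) of the shelling, to $GD^{n-1} \times I \cup GD^{n-1} \times [-1,0]$ (the second factor being $GD^n$ reparametrized, glued along $GD^{n-1} \times \{0\}$); here the stabilizer $H = G_{D^n}$ acts through the cone structure $D^{n-1} = \{y\} * |L|$ fixing $y$, by condition (2) of the definition of elementary $G$--collapse. This is precisely a $G$--collar situation: $GD^{n-1} \times [-1,1]$ with $GD^{n-1}$ in the middle. I would then apply the order-complex/derived-subdivision trick used in the proof of Theorem \ref{reg5} (choosing derived vertices equivariantly, using admissibility and $H$--invariance of the relevant simplices) to build a $G$--homeomorphism of $GD^{n-1} \times [-1,1]$ to itself pushing $GD^{n-1} \times [-1,1]$ into $GD^{n-1} \times (0,1]$, i.e. collapsing the $GD^n$ part onto the collar, supported in an arbitrarily small neighborhood of $GD^n$; extending by the identity on the rest of $M$ gives the desired $h\colon M \to M_1$.

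The main obstacle I expect is the bookkeeping at the ``corner'' where $y \in \partial D^{n-1}$: there the half-ball $D^n$ meets both $\partial M_1$ (along $D^{n-1}$) and $\partial M$ (along $D^n \setminus D^{n-1}$), and one must ensure the equivariant push respects $\partial M$ — this is exactly what condition (3) of the elementary $G$--shelling is designed to supply, via $(\{y\} * \partial|L|) \times I \subset \partial M$, so the model near the corner is a quadrant $GD^{n-1} \times [-1,1]$ with the $\partial M$--face being $G(\{y\}*\partial|L|) \times [-1,1]$, and one checks (as in the parenthetical remark at the end of the proof of \ref{submanifold}) that the push constructed above can be taken to preserve this face. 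The other point requiring care is that the derived-subdivision homeomorphism genuinely has support in an arbitrarily small neighborhood of $M \setminus M_1 = \bigcup_{g \in G} \mathrm{int}(gD^n)$; this follows by choosing the parameter $\epsilon$ in the two derived subdivisions close to $0$, exactly as in Theorem \ref{reg7}. Everything else is a routine equivariant analogue of the non-equivariant elementary-shelling argument.
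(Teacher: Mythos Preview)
Your overall framework matches the paper's proof: reduce to a single elementary $G$--shelling $M = M_1 \cup GD^n$, use a $G$--collar on $GD^{n-1}$ in $M_1$ (chosen within the given neighborhood) to set up the bicollar model $D^{n-1}\times[-1,1]$ with $D^n = D^{n-1}\times[-1,0]$ and $E^n := D^{n-1}\times[0,1]\subset M_1$, construct a local $H$--homeomorphism, and extend equivariantly and then by the identity. You also correctly identify condition~(3) as what handles the corner case $y\in\partial D^{n-1}$.

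The gap is in the local construction. The trick from Theorem~\ref{reg5} produces a \emph{self}-homeomorphism of an ambient space by moving derived vertices in the interval direction; it does not yield a homeomorphism $D^{n-1}\times[-1,1]\to D^{n-1}\times[0,1]$ (and indeed your phrase ``of $GD^{n-1}\times[-1,1]$ to itself pushing \ldots\ into $GD^{n-1}\times(0,1]$'' is literally impossible). More to the point, such an interval-direction push would not be the identity on $|L|\times[0,1]$, which is exactly what is required for the equivariant extension across the overlaps $gD^n\cap D^n\subset |L|\times\{0\}$ guaranteed by condition~(2).

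The paper instead uses the cone structure $K=\{y\}*L$ directly. Writing $D^{n-1}=|K|$, it first observes that $|K|\times\{-1\}\cup |L|\times[-1,0]$ is $H$--homeomorphic to $|K|$ itself (both are ``$|K|$ with an $H$--collar attached to $|L|$'', the latter seen via a derived $H$--subdivision $K'$ of $K$ near $y$, which decomposes $|K'|$ as $|N_{K'}(y)|\cup|N_{K'}(L)|$). This gives an $H$--homeomorphism between the full boundaries
\[
D^{n-1}\times\{-1,1\}\cup|L|\times[-1,1]\;\longrightarrow\;D^{n-1}\times\{0,1\}\cup|L|\times[0,1]
\]
that is the identity on $|L|\times[0,1]\cup|K|\times\{1\}$. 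One then \emph{cones} both sides from the $H$--fixed points $(y,0)$ and $(y,\tfrac12)$ respectively to obtain the desired $H$--homeomorphism $D^{n-1}\times[-1,1]\to E^n$. The coning is what makes the construction $H$--equivariant and identity on the required boundary piece; this is the step your proposal is missing.
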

\begin{proof} As in the corresponding proof of Lemma 3.25 in \cite{RS82}, we need only to
consider the case of an elementary $G$--shelling.  Let $M = M_1 \cup GD^n$ give the elementary $G$--shelling.  Denote $G_{D^n}$ by $H$.  Let $K = \{y\} * L$ be the $H$--triangulation of $D^{n-1}$ from the definition of elementary $G$--collapse.

Choose a $G$--collar on $GD^{n-1}$ in $M_1$ within the given neighborhood of $M \setminus M_1$.   We may consider the disk $D^{n-1} \times [-1,1]$ with $D^n = D^{n-1} \times [-1,0]$, $E^n = D^{n-1} \times [0,1] \subset M_1$, and $D^{n-1} = D^{n-1} \times \{0\}$.  Then if $D^n \neq gD^n$, we have that $D^{n-1} \times [-1,1]$ may only intersect $gD^{n-1} \times [-1,1]$ in $|L| \times [0,1]$.  We will define an $H$--homeomorphism from $D^{n-1} \times [-1,1]$ to $E^n$ which is the identity on $|L| \times [0,1] \cup |K| \times \{1\}$; such a homeomorphism can then be extended, first equivariantly to all of $G(D^{n-1} \times [-1, 1])$ and then by the identity to the rest of $M$.  This last extension is possible because either $|L| \times I = \partial D^{n-1} \times I$ or $cl(\partial D^{n-1}) \setminus |L|) \times I = |\{y\} * \partial L| \times I \subset \partial M$.

\begin{figure}
\includegraphics[scale=.7]{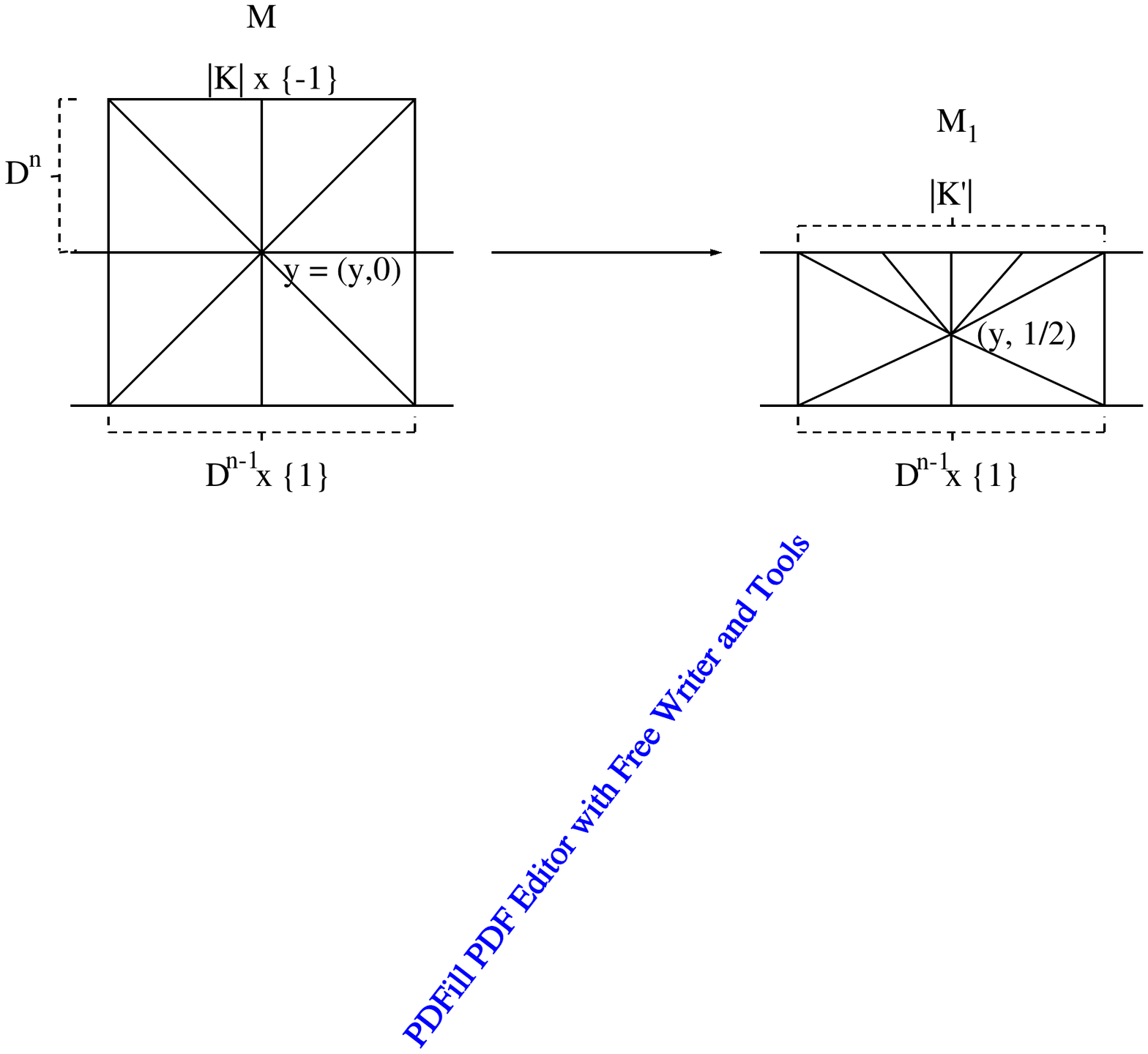}
\caption{Shelling homeomorphism \label{fig-Gshelling}}
\end{figure}

Let $K^{\prime}$ be a derived $H$--subdivision of $K$ near $y$.  We have that $|K| \times \{-1\} \cup |L| \times (-1,0)$ is $H$--homeomorphic to $|K^{\prime}| = |N_{K^{\prime}}(y)| \cup |N_{K^{\prime}}(L)|$ because they are both $H$--homeomorphic to $|K|$ with an $H$--collar attached outside to $|L|$.  Therefore, we have an $H$--homeomorphism from $D^{n-1} \times \{-1,1\} \cup |L| \times [-1,1]$ to $D^{n-1} \times \{0,1\} \cup |L| \times [0,1]$.  Coning the two polyhedra with $(y,0)$ and $(y,\frac{1}{2})$ gives the desired $H$--homeomorphism from $E^n \cup D^n$ to $E^n$.
\end{proof}

\begin{theorem}\label{reg12}
Suppose $Y \subseteq X$ are $G$--polyhedra in a $G$--manifold $M$.  If $X \searrow_G Y$, then a $G$--regular neighborhood of $X$ $G$--shells to a $G$--regular neighborhood of $Y$ in $M$.
\end{theorem}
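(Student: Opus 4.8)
The plan is to reduce to the case of a single elementary $G$--collapse $X = Y \cup GD^m \searrow_G Y$ and then build the shelling of $G$--regular neighborhoods step by step, using the equivariant regular neighborhood machinery (Theorems \ref{reg4}, \ref{reg9}, \ref{reg12}'s predecessors) together with Lemma \ref{reg11}. For a single elementary $G$--collapse, write $H = G_{D^m}$, so that $GD^m = \bigsqcup_{gH} gD^m$ and each $gD^m$ meets $Y$ (and the other translates) only along its face $gD^{m-1}$ contained in $Y$. The strategy is to choose a $G$--regular neighborhood $N$ of $X$ that is built as a $G$--regular neighborhood $N_0$ of $Y$ together with an equivariantly attached collar-neighborhood of the ``protruding'' part $GD^m \setminus G\,\mathrm{int}(D^{m-1})$; concretely, take a $G$--triangulation $(K,L_X,L_Y)$ of a neighborhood of $X$ in which $X, Y$ and $D^m$ are all subcomplexes, and use Lemma \ref{reg3} to arrange that the derived neighborhood $N_{K'}(L_X)$ restricts over $Y$ to $N_{K'}(L_Y)$. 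By Theorem \ref{reg4} this is harmless, since any two $G$--regular neighborhoods of $X$ (resp. of $Y$) are equivariantly ambient homeomorphic fixing $X$ (resp. $Y$).

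The core geometric step is the following local model. Because $D^m \approx_H D^{m-1} \times I$ with $D^{m-1}$ an $H$--cone on apex $y \in D^{m-1}$, and $D^{m-1} \subset Y$, a $G$--regular neighborhood of $X$ near one translate $D^m$ looks like $(D^{m-1} \times I) \times [0,\epsilon]$ attached to a $G$--regular neighborhood of $Y$ along $D^{m-1} \times [0,\epsilon]$. I would show that removing this piece is exactly an elementary $G$--shelling in the sense defined just before Lemma \ref{reg11}: the disk being shelled off is $D^m \times [0,\epsilon] \cong D^{n-1} \times I$ (here $n = \dim M$, and the extra normal directions come from the codimension of $X$ in $M$ together with the collar direction), its intersection with the smaller neighborhood lies in the boundary, and the three equivariance conditions (translates of $D^m$ interact only along the $|L|\times\{0\}$ part coming from $gD^{m-1} \subset Y$; the apex condition; and the boundary condition when $y \in \partial D^{m-1}$) are inherited directly from the corresponding conditions (1)--(3) in the definition of an elementary $G$--collapse. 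The fullness and $N_K(L) = K$, $\dot N_K(L) = J$ conditions needed to recognize these as $G$--regular neighborhoods are supplied by Theorem \ref{reg9} (the Simplicial $G$--Neighborhood Theorem), and $G$--bicollarability of the boundaries is Theorem \ref{regnbhd-bicollar}.

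For a general $G$--collapse $X = X_0 \searrow_G X_1 \searrow_G \cdots \searrow_G X_k = Y$, I would induct on $k$: a $G$--regular neighborhood of $X_0$ $G$--shells to a $G$--regular neighborhood of $X_1$ by the elementary case, and then by Theorem \ref{reg4} (to match up whatever $G$--regular neighborhood of $X_1$ the induction hypothesis wants) and concatenation of shellings, it $G$--shells on down to a $G$--regular neighborhood of $Y$. One must check that the intermediate spaces stay inside the fixed ambient manifold $M$ and that the $n$--manifold-with-boundary condition is preserved at each stage — this follows from Proposition \ref{reg8}, which guarantees each $G$--regular neighborhood is an $n$--$G$--manifold with boundary, together with Proposition \ref{submanifold} applied to see that after shelling off one piece the result is again a $G$--manifold (its new boundary piece is $G$--bicollarable by Theorem \ref{regnbhd-bicollar}).

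The main obstacle I expect is the bookkeeping of the local model: verifying that, after passing to a suitable common $G$--subdivision and derived subdivision, the ``protruding collar over $D^m$'' genuinely detaches as a single disk $D^{n-1} \times I$ satisfying all three equivariance conditions for an elementary $G$--shelling — in particular handling the boundary case $y \in \partial D^{m-1}$, where one must invoke condition (3) of the elementary $G$--collapse definition to place $(\{y\} * \partial|L|) \times I$ inside $\partial M$ so that the shelling homeomorphism of Lemma \ref{reg11} can be extended by the identity. Everything else is assembling the already-developed equivariant PL toolkit (\ref{reg4}, \ref{reg8}, \ref{reg9}, \ref{reg11}, \ref{regnbhd-bicollar}, \ref{submanifold}) in the pattern of Rourke--Sanderson's non-equivariant argument.
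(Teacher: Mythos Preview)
Your overall architecture matches the paper's (reduce to one elementary $G$--collapse, identify the piece to shell off, verify the three conditions of an elementary $G$--shelling), but there is a genuine gap in the induction scheme. You induct on the number $k$ of elementary collapses; the paper instead inducts on the \emph{dimension} $m$ of the collapse, and this is not a cosmetic difference.

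The reason is exactly the point you flag as ``the main obstacle'': showing that the protruding piece --- the part of a $G$--regular neighborhood of $X$ that lies beyond the $G$--regular neighborhood of $Y$ --- is actually an $n$--dimensional $H$--disk of the form $E^{n-1}\times I$. Your local-model description ``$(D^{m-1}\times I)\times[0,\epsilon]$'' is heuristic; a $G$--regular neighborhood of $D^m$ in an $n$--manifold is not a priori a product, and the only general tool that produces a disk from a regular neighborhood is Corollary~\ref{reg13}, which is a \emph{consequence} of the theorem you are proving. So with induction on $k$ alone the argument is circular at the single-step base case.

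The paper breaks this circularity as follows. Set $J$ to triangulate the top face $Z=D^{m-1}\times\{1\}$ and take a derived subdivision near $L_2\cup GJ$. Then the protruding piece is $E^n=|N_{K'}(J)|$. Since $|J|$ has an $(m-1)$--dimensional $H$--collapse to the point $(y,1)$, the induction hypothesis (for collapses of dimension $\le m-1$) together with Lemma~\ref{reg11} gives that $E^n$ is an $H$--disk. The same dimension-induction trick, applied to the $(m-1)$--dimensional $H$--collapse of $D^{m-1}\times\{\tfrac12\}$ to the fixed point $x$, identifies $E^{n-1}=E^n\cap|N_{K'}(L_2)|$ as an $H$--regular neighborhood of $x$ in $|\dot N_{K'}(J)|$, hence $|\st(x)|$, giving the product structure $E^n\approx_H E^{n-1}\times I$. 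The disjointness of distinct translates $N_{K'}(gJ)$ is checked combinatorially using fullness of $L_1$. With these in hand the three $G$--shelling conditions follow; condition (3) really does need a separate boundary argument showing $\{x\}*\partial\lnk_Q(x)\subset\partial M$.

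So: switch your induction to the dimension $m$ of the collapse, and use it (twice) to certify that the shelled piece and its attaching face are $H$--disks. Everything else in your outline is on target.
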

\begin{proof}
We follow the proof of the non-equivariant version, Theorem 3.26 in \cite{RS82}, checking that the conditions of $G$--shelling are satisfied.   The proof uses induction on the dimension of the collapse from $X$ to $Y$.

Suppose that the theorem holds when the $G$--collapse is $(m-1)$--dimensional.  We now consider the case where there is an $m$--dimensional elementary $G$--collapse from $X$ to $Y$.  Let $X = Y \cup GD^m$, with $Y \cap D^m = D^{m-1} \times \{0\}$ where $D^m \approx_{G_{D^m}} D^{m-1} \times I$.   For simplicity, we will from now on denote the subgroup $G_{D^m}$ by $H$.

Let $K$ be an admissible $G$--triangulation of $M$ with full subcomplexes $L_2 \leq L_1$ triangulating $Y$ and $X$ respectively.  Denote by $J$ the subcomplex triangulating $Z = D^{m-1} \times \{1\} \subset D^m$, and by $GJ$, the resulting $G$--triangulation of its $G$--orbit, $GZ$.  Finally, let $y$ be the apex in the $G_{D^m}$--cone structure of $D^{m-1}$.  Note then that $\{y\} \times I$ is fixed pointwise by $H$, and any point $(y,t)$ with $t >0$ has stabilizer exactly $H$.  Let $x = (y, \frac{1}{2})$.

As in \cite{RS82}, by breaking up the collapsing into smaller steps, we may assume that there are no vertices of $K^0$ in $D^{m-1} \times (0,1)$.

Now we choose a derived $G$--subdivision $K^{\prime}$ of $K$ near $L_2 \cup GJ$.  Choose the derived vertices for simplices in $L_1 \setminus (L_2 \cup GJ)$ in $Gp^{-1}(\frac{1}{2})$ ensuring that $x$ is one of them, and denote by $L^{\prime}$ the new triangulation of $X$.  Then $N_{K^{\prime}}(L^{\prime})$ gives a $G$--regular neighborhood of $X$, which is the union of $N_{K^{\prime}}(L_2)$ and $N_{K^{\prime}}(GJ)$, $G$--regular neighborhoods of $Y$ and $GZ$ respectively.  There is an $(m-1)$--dimensional $H$--collapse from $|J|$ to $(y, 1)$, so the induction hypothesis and \ref{reg11} together imply that $|N_{K^{\prime}}(J)|$ is an $n$--dimensional $H$--disk.  Let $E^n = |N_{K^{\prime}}(J)|$.  As an $H$--disk with $x$ an $H$--fixed point, $E^n$ is seen to be $H$--homeomorphic to $|\st_{\dot{N}_{K^{\prime}}(J)}(x)| \times I$.

\begin{figure}
\includegraphics[scale=.7]{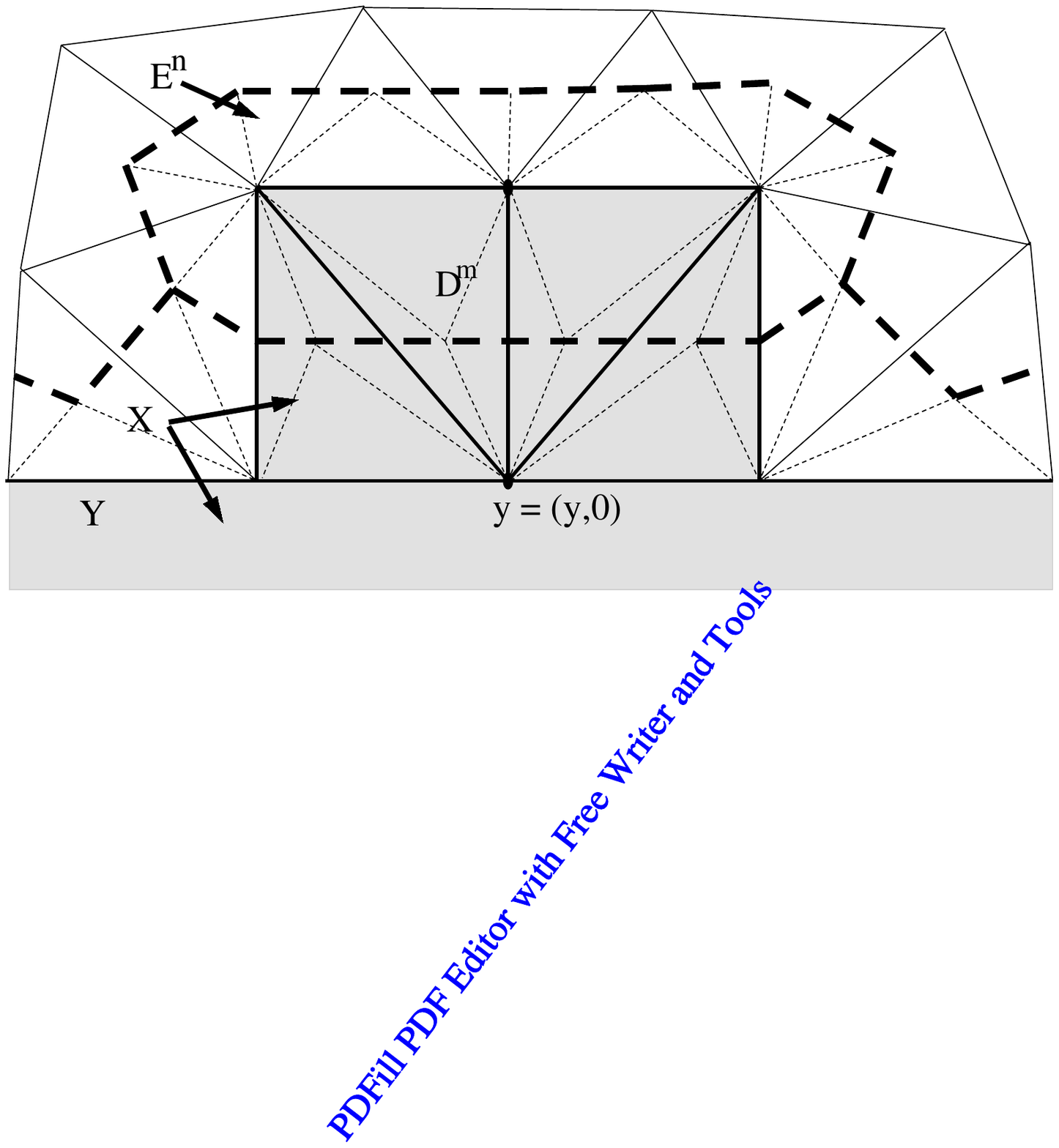}
\caption{Regular neighborhood shelling  \label{fig-xtoy}}
\end{figure}

We will show that if $N_{K^{\prime}}(gJ) \neq N_{K^{\prime}}(J)$, the two subcomplexes must be disjoint.  For such a $g$, suppose there exists a vertex $v = v_{\tau} \in N_{K^{\prime}}(gJ) \cap N_{K^{\prime}}(J)$.  (Note that it must be a derived vertex since $gJ$ and $J$ are themselves disjoint.)  Then $\tau \in K$ contains vertices $u$ and $w$ of $gJ$ and $J$ respectively.  Thus, $\rho = \{u, w\} \in L_1$ since $L_1$ is a full subcomplex of $K$, but $|\rho|$ is not contained in $Y$ and it is not contained in $GD^m$ since a simplex in $D^m$ is may only contain vertices from $L_2$ and $J$, not $gJ$.  This contradicts $X = Y \cup GD^m$, so $N_{K^{\prime}}(gJ) \cap N_{K^{\prime}}(J)$ must be empty.   Since we have shown that $gE^n \neq E^n$ implies $gE^n \cap E^n = \emptyset$, it must be true that $G_{E^n} = H$.

We next prove that $|N_{K^{\prime}}(J)| \cap |N_{K^{\prime}}(L_2)|$ is an $(n-1)$--disk $E^{n-1}$ which is $H$--homeomorphic to $|\st_{\dot{N}_{K^{\prime}}(J)}(x)|$, giving that $E^n$ is $H$--homeomorphic to $E^{n-1} \times I$ as required.  To see this, we show that $E^{n-1}$ is an $H$--regular neighborhood of $D^{m-1} \times \{\frac{1}{2}\}$ in the $(n-1)$--$H$--manifold $|\dot{N}_{K^{\prime}}(J)|$, so that we may again invoke the induction hypothesis for $(m-1)$--dimensional collapses and \ref{shelling-homeo} (since $D^{m-1} \times \{\frac{1}{2}\}$ $H$--collapses to $x$ and an $H$--regular neighborhood of $x$ is the desired star of $x$). 

Let $P$ be the subcomplex of $K^{\prime}$ triangulating $D^{m-1} \times \{\frac{1}{2}\}$ and let $Q = \dot{N}_{K^{\prime}}(J)$ for brevity.  The claim then is that $N_Q(P) = N_{K^{\prime}}(L_2) \cap N_{K^{\prime}}(J)$.  

Let $\sigma \in  N_{K^{\prime}}(L_2) \cap N_{K^{\prime}}(J)$, we easily see that $\sigma$ cannot intersect $L_2^0$ or $J^0$ and must consist only of derived vertices of the form $v_{\rho}$.  Then there must exist $u \in L_2^0$ and $w \in J^0$ such that $\sigma \cup \{u\}$ and $\sigma \cup \{w\}$ are both simplices of $K^{\prime}$.  This implies that there exists $v_{\rho} \in \sigma$ for some $\rho \in K$ containing both $u$ and $w$.  But then $\{u, w\} \in L_1$ due to the fullness of $L_1$.  Thus, $v_{\{u,w\}}$ is in $P$ and can be added to $\sigma$, so $\sigma \in N_Q(P)$.  Hence, we have $N_{K^{\prime}}(L_2) \cap N_{K^{\prime}}(J) \subseteq N_Q(P)$.

For the other inclusion, if $\sigma$ is in $N_Q(P)$, it means that there is a $v_{\tau} \in P^0$ such that $\sigma \cup \{v_\tau\}$ is in $Q$ for some $\tau$ which contains vertices from both $L_2$ and $J$.  We note again that $\sigma$ consists only of derived vertices since it is in $Q = \dot{N}_{K^{\prime}}(J)$, so let $\rho$ be the minimal face such that $v_{\rho} \in \sigma \cup \{\tau\}$.  Then $\rho \leq \tau$, so we have that $\rho \in L_1$.  Since $\rho$ was subdivided, it must contain some vertex $u \in L_2^0$.  Therefore, $u$ may be added to $\sigma$ to get a simplex of $K^{\prime}$ intersecting $L_2^0$, i.e., $\sigma \in N_{K^{\prime}}(L_2)$, and it is already in $Q \subset N_{K^{\prime}}(J)$.  This proves that $N_{K^{\prime}}(L_2) \cap N_{K^{\prime}}(J) = N_Q(P)$.  This finishes the proof that $E^{n-1}$ is an $H$--regular neighborhood of $D^{m-1} \times \{\frac{1}{2}\}$ in $|Q|$ and therefore $H$--homeomorphic to the $(n-1)$--disk $|\st_Q(x)|$ as explained.

Observe that $GE^n \cap |N_{K^{\prime}}(L_2)| \subset |\dot{N}_{K^{\prime}}(L_2)|$, which is $G$--collarable in $|N_{K^{\prime}}(L_2)|$ by \ref{regnbhd-bicollar}.

There is one remaining condition to check for this to be a $G$--shelling.  Write $E^{n-1} = |\st_Q(x)|$.  Then we must verify that $x \in \partial E^{n-1}$ implies that within the $G$--collar on $|\dot{N}_{K^{\prime}}(L_2)|$ in $|N_{K^{\prime}}(L_2)|$, $| \{x\}* \partial \lnk_Q(x)| \times I \subset \partial |N_{K^{\prime}}(L^{\prime})|$.  It suffices for us to show that every simplex of $\{x\} * \partial \lnk_Q(x)$ lies on $\partial M$ because the $G$--collar is given by moving derived vertices around with simplices of $K$.  Thus, if a simplex $\sigma$ consisting only of derived vertices lies on $\partial M$, then $\sigma \times I$ lies on $\partial M$, and hence also on $\partial |N_{K^{\prime}}(L^{\prime})|$.

Since $E^{n-1}$ is an $H$--regular neighborhood of $x \in |Q|$, if $x \in \partial E^{n-1}$, then $x \in \partial |Q| = |Q| \cap \partial M$.  Thus, $x \in \partial M$, forcing it to also belong to $\partial |N_{K^{\prime}}(L^{\prime})|$.  Likewise, any simplex of $\sigma \in \{x\} * \partial \lnk_Q(x)$ containing $x$ lies in $\partial E^{n-1}$ but not in $\dot{N}_Q(x)$, forcing $\sigma$ to be in $\partial M$ and therefore $\partial  |N_{K^{\prime}}(L^{\prime})|$.  Hence, we have proven the final condition that this constitutes a $G$--shelling of a $G$--regular neighborhood of $X$ to a $G$--regular neighborhood of $Y$.
\end{proof}

\begin{cor}\label{reg13}
If Y $G$--collapses to a point, then any $G$--regular neighborhood of Y (in a $G$--manifold) is a
$G$--disk.
\end{cor}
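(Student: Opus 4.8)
The plan is to reduce, via the equivariant regular--neighborhood shelling theorem, to the single statement that a $G$--regular neighborhood of a $G$--fixed point in a $G$--manifold is a combinatorial $G$--disk. First observe that if $Y \searrow_G \{p\}$, then since the two ends of every elementary $G$--collapse (hence of the entire $G$--collapse) are $G$--invariant polyhedra, the singleton $\{p\}$ is $G$--invariant; so $p$ is a $G$--fixed point and $G_p = G$. Now apply Theorem \ref{reg12} to the pair $\{p\} \subseteq Y$ of $G$--polyhedra in the ambient $G$--manifold $M$: a $G$--regular neighborhood $N$ of $Y$ in $M$ $G$--shells to a $G$--regular neighborhood $N_p$ of $\{p\}$ in $M$. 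By Lemma \ref{reg11}, this $G$--shelling supplies a $G$--homeomorphism $N \approx_G N_p$.

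It remains to identify $N_p$. Choose an admissible $G$--triangulation $K$ of $M$ with $p$ as a vertex and set $L = \{p\}$; then $L$ is (trivially) full in $K$ and $|\dot{N}_K(L)| = |\lnk_K(p)| = \partial |\st_K(p)|$, so a derived subdivision of the closed star $|\st_K(p)|$ near $p$ is itself a $G$--regular neighborhood of $\{p\}$. Since $M$ is a $G$--manifold, $\lnk_K(p)$ is an $(n-1)$--dimensional combinatorial $G_p$--sphere or $G_p$--hemisphere; because $G_p = G$, the closed star $|\st_K(p)| = |\{p\} * \lnk_K(p)|$ is by definition a combinatorial $G$--disk, the apex $p$ being $G$--fixed. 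Passing to the derived subdivision near $p$ only re-coordinatizes this cone, its link remaining $G$--homeomorphic to $|\lnk_K(p)|$ by Lemma \ref{links}. Hence, by the uniqueness of $G$--regular neighborhoods (Theorem \ref{reg4}), $N_p \approx_G |\st_K(p)|$, a $G$--disk. Chaining $N \approx_G N_p \approx_G |\st_K(p)|$ and invoking \ref{reg4} once more shows that \emph{any} $G$--regular neighborhood of $Y$ is a $G$--disk.

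The real content of the argument is the second paragraph's observation that a $G$--regular neighborhood of a $G$--fixed point is the cone on a combinatorial $G$--sphere or hemisphere; the rest is bookkeeping with Theorems \ref{reg12} and \ref{reg4} and Lemma \ref{reg11}. The two points where one must be slightly careful are: verifying that the terminal point of a $G$--collapse is genuinely $G$--fixed (so that its link in a $G$--triangulation carries the full group action and the star is a combinatorial $G$--disk rather than merely a $G_p$--disk for a proper subgroup), and checking that the shelling produced by Theorem \ref{reg12} lands on a $G$--regular neighborhood of exactly $\{p\}$ — but the former is immediate from the $G$--invariance built into $G$--collapses, and the latter is precisely the statement of \ref{reg12}.
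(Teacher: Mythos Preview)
Your proof is correct and is exactly the intended unpacking of the corollary: the paper states \ref{reg13} without proof as an immediate consequence of Theorem \ref{reg12}, and your argument supplies precisely the details one would expect---invoke \ref{reg12} to shell down to a $G$--regular neighborhood of the terminal point, use \ref{reg11} to convert the shelling into a $G$--homeomorphism, and identify a $G$--regular neighborhood of a $G$--fixed point with the closed star (a cone on a combinatorial $G$--sphere or hemisphere, hence a $G$--disk by definition). The care you take in verifying that $p$ is genuinely $G$--fixed is the one nontrivial point, and you handle it correctly.
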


\begin{cor}\label{reg14}
A collapsible $G$--manifold is a $G$--disk.
\end{cor}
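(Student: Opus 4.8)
The plan is to realize $M$, up to $G$--homeomorphism, as a $G$--regular neighborhood of a $G$--collapsible subpolyhedron, and then to apply Corollary \ref{reg13}.

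First I would dispose of the case $\partial M = \emptyset$: a closed PL manifold of positive dimension has no free face and so admits no collapse, hence a closed $G$--manifold that $G$--collapses to a point must already be a point, a $0$--dimensional $G$--disk. Assume then $\partial M \neq \emptyset$. Since the boundary of a $G$--manifold is locally $G$--collarable, Theorem \ref{reg5} supplies a $G$--collar $c\colon \partial M \times I \hookrightarrow M$. Set $M^{\prime} := \text{cl}(M \setminus c(\partial M \times [0,1)))$, an $n$--dimensional $G$--submanifold with $\partial M^{\prime} = c(\partial M \times \{1\}) \subset \text{int } M$. Reparametrizing $t \mapsto 1-t$ exhibits $c$ as a $G$--collar of $\partial M^{\prime}$ in $M$ as well, so absorbing that collar gives $M^{\prime} \approx_G M$; in particular $M^{\prime}$ $G$--collapses to a point, since $G$--collapsibility is preserved under PL $G$--homeomorphism.

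Next I would build an admissible $G$--triangulation $(K, L)$ of $(M, M^{\prime})$ by taking a $G$--triangulation $L$ of $M^{\prime}$ and extending it across $c(\partial M \times I) \cong \partial M^{\prime} \times I$ by the prism $G$--triangulation of Proposition \ref{reg1}, introducing no new vertices in the interior of the collar. Then $L$ is full in $K$ (a simplex of $K$ all of whose vertices lie in $L^0$ lies in $M^{\prime}$, hence in $L$), $N_K(L) = K$ (every maximal simplex of the prism part has a vertex on $\partial M^{\prime} \subset L^0$), and $\dot{N}_K(L)$ is precisely the subcomplex triangulating $\partial M$, so $|\dot{N}_K(L)| = \partial M = \partial |N_K(L)|$. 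Choosing a $G$--derived subdivision $K^{\prime}$ of $K$ near $L$ with $G$--invariant new vertices, $N_1 := |N_{K^{\prime}}(L)|$ is then, by definition, a $G$--regular neighborhood of $M^{\prime}$ in the $G$--manifold $M$. As in the proof of Lemma \ref{reg3} it equals $f_{L,K}^{-1}[0,\epsilon]$ for small $\epsilon$, i.e.\ $M^{\prime}$ with a thin sub--$G$--collar of $\partial M^{\prime}$ glued on, so absorbing that collar once more gives $N_1 \approx_G M^{\prime} \approx_G M$. Since $M^{\prime} \subset \text{int } M$ $G$--collapses to a point, Corollary \ref{reg13} then gives that $N_1$ is a $G$--disk, hence so is $M \approx_G N_1$.

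I expect the main obstacle to be the purely bookkeeping task in the last two paragraphs: arranging the prism triangulation and the derived subdivision so that $(K,L)$ is admissible, the new vertices genuinely $G$--invariant, and fullness of $L$ together with $|\dot{N}_K(L)| = \partial M$ hold simultaneously, and checking that "absorbing a $G$--collar'' is equivariant. None of this needs a new idea beyond Theorem \ref{reg5}, Proposition \ref{reg1} and Lemma \ref{reg3}, but it is where the care lies.
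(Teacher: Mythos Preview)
Your overall strategy is sound, but there is a slip in the verification that $N_1$ is a $G$--regular neighborhood, and in any case the paper's route (left implicit, following Rourke--Sanderson 3.27--3.28) is much shorter.

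\medskip
\noindent\textbf{The gap.} In the paper's definition of regular neighborhood, the precondition is $|\dot{N}_K(L)| = \partial|N_K(L)|$ \emph{in $X$}, i.e.\ the \emph{frontier} of $|N_K(L)|$ in the ambient space. With your pair $(K,L)$ you have $N_K(L)=K$, hence $|N_K(L)|=M$ and its frontier in $X=M$ is empty; but $|\dot{N}_K(L)|=\partial M\neq\emptyset$. So the hypothesis fails and you cannot conclude ``by definition'' that $N_1=|N_{K'}(L)|$ is a $G$--regular neighborhood of $M'$. (You have silently replaced the frontier with the manifold boundary.) The conclusion is still true---one can recover it from Theorem~\ref{reg9} applied to the triple $(N_{K'}(L),L,\dot{N}_{K'}(L))$ triangulating $(N_1,M',\partial N_1)$---but that is an extra step you did not take.

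\medskip
\noindent\textbf{The paper's argument.} All of this is avoided by observing that a $G$--manifold $M$ is a $G$--regular neighborhood of \emph{itself}: take any admissible $G$--triangulation $K$ of $M$ and set $L=K$. Then $L$ is trivially full, $\dot{N}_K(K)=\emptyset$ equals the frontier of $M$ in $M$, the derived subdivision near $L=K$ is $K'=K$, and $N_1=|N_{K'}(K)|=M$. Now Corollary~\ref{reg13} applied with $Y=M$ (which $G$--collapses to a point by hypothesis) gives that $M$ is a $G$--disk. Your collar construction, the auxiliary $M'$, the prism triangulation, and the absorption argument are all unnecessary.
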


\begin{cor}\label{reg15}
If $X \subset \text{int } M$ and $X \searrow_G Y$ then a $G$--regular
neighborhood of X in M is a $G$--regular neighborhood of Y in M.
\end{cor}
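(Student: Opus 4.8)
The plan is to obtain this as a short corollary of Theorem~\ref{reg12} together with Lemma~\ref{reg11}, in the same spirit as the non-equivariant argument in \cite{RS82}. Write $\mathcal{M}$ for the ambient $G$--manifold and let $N$ be a $G$--regular neighborhood of $X$ in $\mathcal{M}$. Since $X\subset\text{int}\,\mathcal{M}$, Proposition~\ref{reg8} tells us $N$ is an $n$--dimensional $G$--manifold with boundary and $\partial N=|\dot N_{K'}(L)|$, and Theorem~\ref{regnbhd-bicollar} gives that $\partial N$ is $G$--bicollarable in $\mathcal{M}$; moreover $Y\subseteq X\subset\text{int}\,N$, so $N$ is in particular a $G$--neighborhood of $Y$.

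First I would apply Theorem~\ref{reg12} to the $G$--collapse $X\searrow_G Y$: it shows that $N$ $G$--shells to a $G$--regular neighborhood $N_1$ of $Y$ in $\mathcal{M}$. Then I would apply Lemma~\ref{reg11} with $(M,M_1)=(N,N_1)$ to produce a $G$--homeomorphism $h\colon N\to N_1$ which is the identity outside an arbitrarily small neighborhood of $\text{cl}(N\setminus N_1)$. Because $N_1$ is a $G$--regular neighborhood of $Y$ we have $Y\subset\text{int}\,N_1$, so $\text{cl}(N\setminus N_1)$ is disjoint from $Y$; hence this neighborhood may be chosen disjoint from $Y$, and then both $h$ and $h^{-1}$ restrict to the identity on $Y$.

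It remains to promote the $G$--homeomorphism $h$ to the assertion that $N$ \emph{is} a $G$--regular neighborhood of $Y$, and here I would invoke the Simplicial $G$--Neighborhood Theorem~\ref{reg9}. Since $N_1$ is $G$--regular, condition~(ii) of~\ref{reg9} provides admissible $G$--triangulations $(K_1,L_1,J_1)$ of $(N_1,Y,\partial N_1)$ with $L_1$ full in $K_1$, $K_1=N_{K_1}(L_1)$, and $J_1=\dot N_{K_1}(L_1)$. Transporting these across $h^{-1}\colon N_1\to N$ (which fixes $Y$ pointwise) yields admissible $G$--triangulations $(K,L,J)$ of $(N,Y,\partial N)$ with the same combinatorial properties, since fullness, the identities $K=N_K(L)$ and $J=\dot N_K(L)$, and admissibility are all preserved by a $G$--homeomorphism of triangulated polyhedra carrying $Y$ to $Y$. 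Combined with the first paragraph, this verifies both hypotheses of~\ref{reg9} for $N$, so $N$ is a $G$--regular neighborhood of $Y$ in $\mathcal{M}$, as claimed.

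I do not expect a genuine obstacle here: the substance has already been absorbed into Theorems~\ref{reg12} and~\ref{reg11}. The only points requiring care are (a) that the shelling homeomorphism of~\ref{reg11} can be localized away from $Y$, which is exactly where $Y\subset\text{int}\,N_1$ is used, and (b) that the defining data of a $G$--regular neighborhood is invariant under $G$--homeomorphisms of triangulated polyhedra fixing the subpolyhedron, so that the triangulations of $N_1$ furnished by~\ref{reg9} genuinely pull back to triangulations exhibiting $N$ as a $G$--regular neighborhood of $Y$.
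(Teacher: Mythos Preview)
Your proposal is correct and follows essentially the same approach as the paper's proof: apply Theorem~\ref{reg12} to shell $N$ to a $G$--regular neighborhood $N_1$ of $Y$, use Lemma~\ref{reg11} to get a $G$--homeomorphism $N\to N_1$ fixing $Y$, and then transport the simplicial-neighborhood data of $N_1$ back to $N$ via Theorem~\ref{reg9}. Your version is in fact a bit more careful than the paper's in spelling out why the shelling homeomorphism can be taken to fix $Y$ and in explicitly verifying condition~(i) of Theorem~\ref{reg9} via Proposition~\ref{reg8} and Theorem~\ref{regnbhd-bicollar}.
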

\begin{proof} Let $N_1$ and $N_2$ be $G$--regular neighborhoods of $X$ and
$Y$ respectively in $M$.  By \ref{reg12}, $N_1$ $G$--shells to
$N_2$, and so by \ref{reg11}, they are $G$--homeomorphic fixing
$Y$. The homeomorphism carries any $G$--triangulation of $N_2$ to a
$G$--triangulation of $N_1$.  Therefore, by \ref{reg9}, $N_1$ is a
$G$--regular neighborhood of $Y$ in $M$.
\end{proof}

Our final goal for this section is the following result that enables us to recognize $G$--regular neighborhoods from $G$--collapses.

\begin{theorem}[Collapsing Criterion for $G$--Regular Neighborhoods]\label{reg16}
Let N be a $G$--neighborhood of Y in int M. Then N is $G$--regular if
and only if
\begin{itemize}
\item[(i)]N is an $n$--manifold with $\partial N$ $G$--bicollarable in $M$, \item[(ii)]N
    $\searrow_G$ Y.
\end{itemize}
\end{theorem}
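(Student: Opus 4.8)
The plan is to prove both directions, leaning heavily on the machinery already assembled in this section. For the ``only if'' direction, assume $N$ is a $G$--regular neighborhood of $Y$ in $\text{int } M$. Condition (i) is then immediate: \ref{reg8} gives that $N$ is an $n$--manifold with boundary, and \ref{regnbhd-bicollar} gives that $\partial N = |\dot{N}_{K'}(L)|$ is $G$--bicollarable in $M$. For condition (ii), one writes $N = |N_{K'}(L)|$ with $(K,L)$ an admissible $G$--triangulation, $L$ full in $K$, and must exhibit a $G$--collapse $N \searrow_G Y$. The natural candidate is the collapse of $|N_{K'}(L)|$ onto $|L| = Y$ that sweeps in the derived vertices $v_\tau$ in increasing order of the dimension of $\tau$; each such step is an elementary collapse pushing $|\st_{K'}(v_\tau)|$ onto its ``outer face,'' and since the new vertex set $\{v_\tau\}$ was chosen $G$--invariantly, the whole matching is $G$--equivariant. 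One then checks the three extra conditions in the definition of an elementary $G$--shelling/collapse: the cone structure on each $D^{m-1}$ is supplied by the apex being $v_\tau$ (or a point determined $H$--equivariantly), and the interaction condition $gD^m \neq D^m \Rightarrow gD^m \cap D^m \subseteq Y$ follows from fullness of $L$ in $K$, exactly as in the disjointness arguments already used in the proof of \ref{reg12}.

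For the ``if'' direction, assume (i) and (ii). Let $N_0$ be some $G$--regular neighborhood of $Y$ in $\text{int } M$ (one exists, e.g. $|N_{K'}(L)|$ for any admissible $G$--triangulation $(K,L)$ of a neighborhood of $Y$ with $L$ full). I would first want to compare $N$ with $N_0$ and conclude $N$ is $G$--regular by transporting regularity across a $G$--homeomorphism, using the characterization of \ref{reg9}: it suffices to produce a $G$--homeomorphism $N \to N_0$ fixing $Y$, since that carries an admissible $G$--triangulation of $N_0$ meeting the conditions of \ref{reg9} to one for $N$. To get such a homeomorphism, use hypothesis (ii): since $X = Y$ here and $Y \searrow_G Y$ trivially, the real content is to relate $N$ to a regular neighborhood. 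One route: take $N_0 \subset \text{int } N$ a $G$--regular neighborhood of $Y$ chosen small (possible by shrinking derived vertices toward $Y$); by (i), $N$ is itself an $n$--manifold with $G$--bicollarable boundary, and by (ii) $N$ $G$--collapses to $Y$, hence $N$ $G$--collapses to $N_0$ through the collar $\text{cl}(N \setminus N_0)$. Now $N$ and $N_0$ are $n$--manifolds, and the collapse $N \searrow_G N_0$ with $N = N_0 \cup G(\text{collar cells})$ is in fact a $G$--shelling once one verifies the shelling conditions, using the $G$--bicollar on $\partial N$ from (i) to supply condition (1) and the collar structure near $\partial N_0$ for condition (3). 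Then \ref{reg11} gives a $G$--homeomorphism $N \to N_0$ that is the identity on $Y$, and \ref{reg9} finishes.

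The main obstacle I expect is the ``if'' direction — specifically, promoting the abstract $G$--collapse $N \searrow_G Y$ into an honest $G$--shelling between manifolds so that \ref{reg11} applies. A general $G$--collapse of $N$ need not proceed through intermediate $n$--manifolds, and its elementary steps may have dimension $< n$; so one cannot directly feed (ii) into the shelling lemma. The fix, following the non-equivariant Rourke--Sanderson argument, is to not use the given collapse on $N$ directly but rather to apply \ref{reg12} to the collapse $Y \searrow_G Y$ relative to a triangulation of $N$ — or more precisely, to observe that a $G$--regular neighborhood $N'$ of $Y$ computed inside $N$ (using condition (i) to know $N$ is a manifold, so ``inside $N$'' makes sense) $G$--shells to itself, and that $N$ deformation-collapses onto $N'$ through the $G$--bicollar; assembling these while keeping every step $H$--equivariant on the relevant stabilizer $H$, and checking the three elementary $G$--shelling conditions at each step, is the technical heart. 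Once that bookkeeping is done, invoking \ref{reg9} and \ref{reg11} is routine.
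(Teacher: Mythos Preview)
Your ``only if'' direction is essentially the paper's: condition (i) comes from \ref{reg8} and \ref{regnbhd-bicollar}, and for (ii) one collapses $|N_{K'}(L)|$ onto $|L|$. The paper phrases the collapse via the level function $f = f_{L,K}$ (the cells $\sigma \cap f^{-1}[0,\epsilon]$ collapse from their free face $\sigma \cap f^{-1}(\epsilon)$, in decreasing order of dimension), which packages the bookkeeping you describe with the derived vertices.

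The ``if'' direction, however, has a real gap. Your plan is to go \emph{inward}: pick a small $G$--regular neighborhood $N_0 \subset \text{int } N$ and then argue that $N$ $G$--shells to $N_0$. But you have no mechanism for this. The hypothesis $N \searrow_G Y$ does not give $N \searrow_G N_0$ (the collapse need not pass through $N_0$), and you cannot invoke \ref{reg10} to identify $\text{cl}(N \setminus N_0)$ as a collar, since that already requires $N$ to be regular. Your attempted patch---applying \ref{reg12} to the trivial collapse $Y \searrow_G Y$, or saying ``$N$ deformation-collapses onto $N'$ through the $G$--bicollar''---does not produce the needed shelling: the bicollar you have is on $\partial N$, not on $\partial N_0$, and there is no reason $\text{cl}(N\setminus N_0)$ should be just a collar.

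The missing idea (and the actual Rourke--Sanderson trick) is to go \emph{outward}. Use the $G$--bicollar $\partial N \times [-1,1]$ from (i) to enlarge $N$ to $N_1 := N \cup (\partial N \times [0,\tfrac12])$. Triangulated suitably, $N_1$ is visibly a $G$--\emph{simplicial} neighborhood of $N$ in $M$, hence a $G$--regular neighborhood of $N$. Now apply \ref{reg15} with $X = N$ and the given collapse $N \searrow_G Y$: a $G$--regular neighborhood of $N$ is a $G$--regular neighborhood of $Y$, so $N_1$ is $G$--regular for $Y$. Finally, the bicollar supplies a $G$--homeomorphism of $M$ (supported in $\partial N \times [-1,1]$, sending $\partial N \times \{\tfrac12\}$ to $\partial N \times \{0\}$) carrying $N_1$ onto $N$; hence $N$ is $G$--regular for $Y$ as well. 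This sidesteps entirely the problem of promoting the given collapse to a shelling of $N$ itself.
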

\begin{proof} The proof follows exactly the non-equivariant
case (Corollary 3.30 in \cite{RS82}), but we include it here nonetheless because this theorem is a key tool in proving our main result.  For the first implication, suppose $N = |K|$ is regular with $K$ an admissible simplicial neighborhood of $L$; then (i) follows from \ref{reg8} and \ref{regnbhd-bicollar}. Recall the map $f = f_{L,K}$ from the proof of
\ref{reg3}. Choose $\epsilon \in (0,1)$ and choose a $G$--derived
$K^{\prime}$ of $K$ near $L$ with all of the new vertices lying in
$f^{-1}(\epsilon)$ for a given $\epsilon \in (0,1)$ to obtain $N_1
= |N_{K^{\prime}}(L)| = f^{-1}[0,\epsilon]$, a $G$--regular
neighborhood of $Y$ in $M$. We have that $N \approx_G N_1$, and we
have a $G$--cell structure on $N_1$ whose cells are obtained by
intersecting the interior simplices of $K$ with $f^{-1}(0)$,
$f^{-1}(\epsilon)$, or $f^{-1}[0,\epsilon]$. We may collapse, along with its
orbit, each cell $\sigma \cap f^{-1}[0,\epsilon]$, $\sigma \in K
\setminus \dot{N}_K(L)$ from its face $\sigma \cap f^{-1}(\epsilon)$ in
order of decreasing dimension.  That this is a $G$--collapse follows from the admissibility of $K$.

For the other implication, suppose we have $N$ satisfying conditions (i) and (ii).  Let $C = \partial N \times [-1,1]$ be a $G$--bicollar with $\partial N = \partial N \times \{0\}$.  Then let $N_1 = N \cup (\partial N \times [0,\frac{1}{2}])$, which constitutes a $G$--regular neighborhood of $N$ in $M$ because we can triangulate it to be a simplicial neighborhood.  Therefore, by \ref{reg15}, since $N \searrow_G Y$, $N_1$ is also a $G$--regular neighborhood of $Y$.  But we can define a $G$--homeomorphism on $C$ fixing $\partial N \times \{-1,1\}$ and carrying $\partial N \times \{\frac{1}{2}\}$ to $\partial N \times \{0\}$.  We can extend this by the identity to all of $M$, mapping $N_1$ to $N$, showing that the latter is also a $G$--regular neighborhood of $Y$ in $M$.
\end{proof}

\section{Main Results}
In this section, $G$ is the group $\{\pm 1\}$ unless otherwise
noted. Working inside $\Hom(P_4 \setminus \{3\},K_n)$ ($G$--homeomorphic to the $G$--manifold $S^{n-2} \times S^{n-2}$), we show that $\Hom_{\{3\}}(P_4, K_n)$ is a
$G$--regular neighborhood of the diagonal $\Hom(K_2,K_n)$ using the collapsing criterion, i.e.,
we show that it is a manifold of the correct dimension and that it (simplicially) $G$--collapses to the
diagonal.  Recall from Section 1 how we represent elements of the posets in question as arrays
whose entries $A, B, C, D$ are nonempty subsets of $\{1, \ldots, n\}$.

Define
\begin{equation*}
  M := \{\phi = \left(
   \begin{array}{ccc}
    A & B \\
    C & D \\
   \end{array}\right) \quad
   | \quad A\cap B = \emptyset, \quad C\cap D = \emptyset \}
\end{equation*}
\begin{equation*}
  K := \{\phi \in M \quad | \quad B\cup D \neq  \{1, \ldots, n\} \}
\end{equation*}
\begin{equation*}
  L := \{\phi \in K \quad | \quad A \cap C = \emptyset \}
\end{equation*}
\begin{equation*}
  S := \{\phi \in K \quad | \quad A = C, \quad B = D\}
\end{equation*}

We reiterate that $M$, $K$, and $L$ are the face posets of the
$G$--regular cell complexes $\Hom(P_4 \setminus \{3\}, K_n)$,
$\Hom_{\{3\}}(P_4, K_n)$, and $\Hom_{\{3\}}(C_5, K_n)$
respectively, and $S$ that of the diagonal $\Hom(K_2,K_n)$.
By passing to order complexes, we obtain that $\Delta S$ and $\Delta L$ are full $G$--subcomplexes of $\Delta K$, which is a full $G$--subcomplex of $\Delta M$, and they are all admissible.  Our goal is to show that $|\Delta K|$ is a $G$--regular neighborhood of $|\Delta S|$ whose boundary is $|\Delta L|$.

\begin{prop}\label{main1}
$\Delta K$ is a $(2n - 4)$--manifold with boundary $\Delta
L$.
\end{prop}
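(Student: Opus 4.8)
The plan is to show that $\Delta K$ is a combinatorial $(2n-4)$--manifold with boundary $\Delta L$ by computing the link of an arbitrary vertex and showing it is a $G_v$--sphere when the vertex lies ``in the interior'' and a $G_v$--hemisphere when it lies ``on the boundary'', with the boundary being exactly $\Delta L$. Since $\Delta K$ is the barycentric subdivision of the regular cell complex $\Hom_{\{3\}}(P_4,K_n)$, a vertex of $\Delta K$ is a cell $\phi = \left(\begin{array}{cc} A & B \\ C & D \end{array}\right) \in K$, and its link in $\Delta K$ is the order complex of $K_{<\phi} * K_{>\phi}$ (the join of the faces of $\phi$ and the cocells of $\phi$), i.e. $|\lnk_{\Delta K}(\phi)| \cong |\Delta K_{<\phi}| * |\Delta K_{>\phi}|$. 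So the dimension count reduces to understanding these two pieces: $\Delta K_{<\phi}$ is the boundary of the cell $\phi$ (a product of simplices $\prod \Delta\phi(v)$), hence a PL sphere of dimension $(|A|-1)+(|B|-1)+(|C|-1)+(|D|-1)-1$; and $\Delta K_{>\phi}$, which I'd identify with the order complex of the poset of cells of $\Hom_{\{3\}}(P_4,K_n)$ strictly containing $\phi$.

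The key step is to identify $K_{>\phi}$ with something recognizable. Using the discussion in Section 1, $\Hom_{\{3\}}(P_4,K_n)_{>\phi}$ should be the face poset of a join of links in $\ind(P_4\setminus\{3\}) = \ind(K_2) * \ind(K_2)$ (the pentagon--minus--vertex being two disjoint edges), subject to the extra open condition $B\cup D \neq \{1,\dots,n\}$. Concretely: extending $\phi$ upward amounts to, for each color $j \notin A\cup B$ (resp. $j\notin C\cup D$), deciding whether to add $j$ to $A$ or to $B$ (resp. to $C$ or $D$) along $\ind(K_2)$, with the global constraint that not all colors get pushed into $B$ and $D$ simultaneously. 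This makes $\Delta K_{>\phi}$ the join of two spheres (from the two $K_2$'s) with a single codimension condition removed — so it is a disk or sphere of the appropriate dimension, and whether it is a sphere or a hemisphere is governed precisely by whether the constraint $B\cup D\neq\{1,\dots,n\}$ is active at $\phi$, i.e. whether $\phi$ already has $A\cup B\cup C\cup D = \{1,\dots,n\}$ with $B\cup D$ one step from being everything. Adding the dimensions: $|\lnk_{\Delta K}(\phi)|$ has dimension $(2n-4)-1 = 2n-5$ in every case, confirming $\Delta K$ is pure of dimension $2n-4$.

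Then I'd argue that the link is a genuine $G_v$--sphere or $G_v$--hemisphere (not merely a homology sphere): $\Delta K_{<\phi}$ is the boundary of a product of simplices, which is a PL sphere, and $\Delta K_{>\phi}$ is the order complex of a poset that is itself a join of boundary complexes of products of simplices with at most one face removed — a PL sphere or PL ball — and the join of PL spheres/balls is a PL sphere/ball. The $G_v$--equivariance comes from the fact that the stabilizer of $\phi$ is nontrivial only when $\phi$ is fixed by the reflection, i.e. $A = C$ and $B = D$, in which case $G_v = G$ acts by swapping the two $K_2$--factors in both $\Delta K_{<\phi}$ and $\Delta K_{>\phi}$, and this swap is visibly the standard involution on a join of two homeomorphic pieces, which is the model $G_v$--action on $S(\mathbb{R} v^\perp)$. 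Finally, the boundary: a vertex $\phi$ has hemisphere link exactly when the constraint $B\cup D\neq\{1,\dots,n\}$ prevents upward extension in some direction, and tracking this through shows the boundary consists of all $\phi$ with a face on which $A\cap C = \emptyset$ can be achieved — equivalently, the boundary of $\Delta K$ as a subcomplex is exactly $\Delta L$. The main obstacle I anticipate is bookkeeping the hemisphere-versus-sphere dichotomy cleanly: one must carefully sort, for each $\phi \in K$, which of the ``upward'' directions are blocked by $B\cup D\neq\{1,\dots,n\}$, and check that the blocked configurations organize into the half-space structure of a combinatorial hemisphere with boundary link lying over $\Delta L$, rather than some more complicated subcomplex. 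Everything else is the (by now routine) manipulation of order complexes of intervals in these explicit posets of quadruples of subsets.
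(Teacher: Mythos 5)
Your overall skeleton matches the paper's: compute the link of a vertex $\phi$ as $\Delta K_{<\phi} * \Delta K_{>\phi}$, recognize $\Delta K_{<\phi}$ as the boundary of a product of simplices and hence a PL sphere, and then show $\Delta K_{>\phi}$ is a sphere or a disk according as $\phi$ is interior or boundary. But there are two real problems.

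First, your stated criterion for when the link is a hemisphere is wrong. You say it is governed by whether $A\cup B\cup C\cup D = \{1,\dots,n\}$ with $B \cup D$ ``one step from being everything.'' The correct dichotomy is simply $A\cap C \neq \emptyset$ (sphere) versus $A\cap C = \emptyset$ (disk), i.e. whether $\phi$ lies in $K\setminus L$ or in $L$. The reason the constraint $B'\cup D'\neq\{1,\dots,n\}$ is automatic for $\phi'>\phi$ when $A\cap C\neq\emptyset$ is that $\emptyset\neq A\cap C\subseteq A'\cap C'\subseteq(B'\cup D')^c$. Your criterion gives the wrong answer in concrete cases: take $n=3$, $A=\{1\}$, $B=\{2\}$, $C=\{2\}$, $D=\{1\}$. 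Then $A\cup B\cup C\cup D=\{1,2\}\neq\{1,2,3\}$, so your criterion says sphere, but $A\cap C=\emptyset$, so $\phi\in L$, and a direct computation shows $K_{>\phi}$ has only three elements (add $3$ to $A$, to $C$, or to both), whose order complex is a $1$--disk, not a $1$--sphere.

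Second, and more seriously, you never actually show that the upper link is a disk when $\phi\in L$. You acknowledge this as ``the main obstacle,'' but that obstacle \emph{is} the content of the proposition. The constraint $B\cup D\neq\{1,\dots,n\}$ removes a face from a join of circles and $0$--spheres in a way that is not obviously a hemisphere, and the possible colors split into four types (colors in $A\cap D$, $B\cap C$, $B\cap D$; colors in $B\setminus(C\cup D)$ or $D\setminus(A\cup B)$; colors in $A\setminus D$ or $C\setminus B$; and uncolored elements) contributing different factors. The paper isolates the hard part as a standalone combinatorial fact — $F_{k,l}$, the subcomplex of $(\ast_{i=1}^k S^1)\ast(\ast_{j=1}^l S^0)$ consisting of simplices with at least one coordinate having no $-1$, is a $(2k+l-1)$--disk — and proves it by an explicit Morse matching (Lemma~\ref{main2}). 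Without something like this you have not established the manifold-with-boundary structure. Finally, note that Proposition~\ref{main1} is stated and proved non-equivariantly; the $G$--equivariance you bring in (identifying $G_v$--hemisphere structure) is not needed here and is handled separately when \ref{reg16} is invoked in Theorem~\ref{main5}, where $G$--bicollarability of $\partial|\Delta K|$ follows from freeness of the action off the diagonal.
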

\begin{proof}
We show that the link of an element of $K$ is a sphere or a
disk of dimension $(2n-5)$. For any $\phi = \left(
  \begin{array}{ccc}
   A & B \\
   C & D \\
  \end{array}\right) \in K$, $\text{lnk}_{\Delta K}(\phi) = \Delta K_{<\phi} \ast
\Delta K_{>\phi}$. For any $\phi \in K$, we obtain an element of
its lower link by deleting proper subsets from each of $A$, $B$,
$C$, and $D$, at least one of which is nonempty. Therefore,
$K_{<\phi}$ is isomorphic to the face poset of
$\partial\Delta A \ast\partial\Delta B\ast\partial\Delta C \ast\partial\Delta D$,
yielding that $\Delta K_{<\phi}$ is a combinatorial sphere of
dimension $|A| + |B| + |C| + |D| - 5$.  (Recall that, if $A$ is an unordered set, $\Delta A$ is the full simplex having $A$ as its vertex set, whereas, if $P$ is a poset,  $\Delta P$ is its order complex.)

When $\phi \in K\setminus L$, we show that $\Delta K_{>\phi}$ is a
sphere of dimension $2n-|A|-|B|-|C|-|D|-1$, yielding that
$\text{lnk}_{\Delta K}(\phi)$ is a sphere of dimension $2n-5$. For
any $\phi^{\prime} = \left(
  \begin{array}{ccc}
   A^{\prime} & B^{\prime} \\
   C^{\prime} & D^{\prime} \\
  \end{array}\right) \in M \text{ such that } \phi^{\prime} > \phi$, we have that
\begin{equation*}
\emptyset \neq A\cap C \subseteq A^{\prime} \cap C^{\prime}
\subseteq (B^{\prime} \cup D^{\prime})^c
\end{equation*}
so that $\phi^{\prime} \in K$. Thus, to obtain an element of the upper link of
$\phi$, any element of $(A \cup B)^c$ can be added to either $A$ or
$B$, but not to both, and similarly for elements of $(C \cup
D)^c$.  As a consequence, we have that $K_{>\phi}$ is isomorphic to the face poset of
$\ast_{i=1}^m S^0$ where $m = |(A\cup B)^c|+|(C\cup D)^c|$, and
therefore  $\Delta K_{>\phi}$ is a sphere of dimension
$2n-|A|-|B|-|C|-|D|-1$ as claimed.

In the case where $\phi \in L$, we claim that $\Delta K_{>\phi}$
is a disk of dimension $2n-|A|-|B|-|C|-|D|-1$, meaning that
$\text{lnk}_{\Delta K}(\phi)$ is a disk of dimension $2n-5$.  This
will finish the proof that $\Delta K$ is a manifold and $\Delta L$
is its boundary. To see that $K_{>\phi}$ is the face poset of a
subcomplex of a join of spheres, we consider the various types of
elements that we can add to one or more of $A$, $B$, $C$, and $D$
to obtain a larger element of $K$.
\begin{enumerate}
\item An element of $A\cap D$ cannot be added anywhere (while
remaining in $M$).  The same is true for elements of $B\cap D$ and
$B\cap C$. Thus, these elements contribute nothing to the upper
link. \item An element of $B \setminus (C\cup D)$ can be added to
$C$ or to $D$; doing so will give us something in $K$, since the
element was already in $B \cup D$. Thus, each of these elements
contributes a copy of $S^0$ to the join of spheres. Similarly,
each element of $D \setminus (A\cup B)$ contributes a copy of
$S^0$ to the join. \item An element of $A\setminus D$ can be added
to $C$ or to $D$, contributing a copy of $S^0 = \{\pm 1\}$ to the
join with $+1$ indicating that the element was added to $C$ and
$-1$ indicating $D$. Similarly, an element of $C\setminus B$ can
be added to $A$ $(+1)$ or to $B$ $(-1)$. Adding elements of Type
$3$ to $B$ or $D$ could produce something not in $K$. \item An
element of $(A\cup B \cup C \cup D)^c$ can be added to $A$ or $B$
(but not both) and, at the same time, to $C$ or $D$ (but not
both).  This contributes a copy of $S^1 = \{\pm 1\}\ast\{\pm 1\}$
(treated as a single coordinate) to the join of spheres with the
$+1$'s corresponding to $A$ and $C$ and the $-1$'s corresponding
to $B$ and $D$. As with Type $3$, adding this type of element to
$B$ or $D$ could yield something not in $K$.
\end{enumerate}
To ensure that we remain in $K$, there must be an element of
$(B\cup D)^c$ which is not added to $B\cup D$.  In terms of
coordinates, this means there must be at least one coordinate
corresponding to Type $3$ or $4$ above that has no $-1$'s.

Before proceeding, we define \begin{equation*}F_{k,l}\subseteq
(\ast_{i=1}^kS^1)\ast(\ast_{j=1}^l\{\pm 1\})\end{equation*} to be
the subcomplex whose simplices have at least one coordinate from
the join with no $-1$'s.  (Note that, as before, each copy of $S^1
= \{\pm 1\}\ast\{\pm 1\}$ is regarded as a single coordinate.)  We
will prove a lemma (\ref{main2}) stating that $F_{k,l}$ is a disk of dimension
$2k + l - 1$. Assuming that result, since $K_{>\phi}$ is isomorphic to the face poset of
$(\ast_{i=1}^mS^0)\ast F_{k,l}$ where $k = |(A \cup B \cup C
\cup D)^c|$, $l = |A \setminus D| + |C \setminus B|$, and $m = |B
\setminus (C\cup D)| + |D \setminus (A\cup B)|$, we have that
$\Delta K_{>\phi}$ is a disk of dimension
\begin{align*}m-1+2k+l &= |B\setminus (C\cup D)| + |D\setminus (A\cup B)| \\
& \qquad + 2|(A\cup B\cup C\cup D)^c| + |A\setminus D| + |C\setminus B|-1 \\
&= 2n - 2|A| - 2|B| - 2|C| - 2|D| \\
& \qquad + 2|A\cap D| + 2|B\cap C| + 2|B\cap D| \\
& \qquad + |B\setminus (C\cup D)| + |D\setminus (A\cup B)| \\
& \qquad + |A\setminus D| + |C\setminus B| - 1\\
&= 2n - |A| - |B| - |C| - |D| - 1
\end{align*}
as we had claimed.
\end{proof}

\begin{lemma}\label{main2}
For $k, l \in \N \text{ such that } 2k + l - 1 \geq 0$, $F_{k,l}$
is a disk of dimension $2k + l - 1$.
\end{lemma}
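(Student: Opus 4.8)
The plan is to prove Lemma~\ref{main2} by induction on $2k+l$, realizing $F_{k,l}$ as a union of two PL balls of dimension $2k+l-1$ that meet in a common codimension-one face, and then invoking the classical fact that such a union is again a PL $(2k+l-1)$-ball (see \cite{RS82}; this also follows from Corollary~\ref{reg14} applied with trivial group, since the union in question is a PL manifold --- its links at interior points of the common face are spheres --- and it collapses onto one of the two balls, hence is collapsible).

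Setup and base cases. Write $Y_{k,l}:=(\ast_{i=1}^kS^1)\ast(\ast_{j=1}^lS^0)$ for the ambient complex; it is the boundary complex of a $(2k+l)$-dimensional cross-polytope, hence a PL $(2k+l-1)$-sphere. In the $i$-th circle, written $S^1=\{+,-\}\ast\{+,-\}$, let $D_i$ be the closed edge spanned by the two ``$+$'' vertices and $D_i^\ast$ the complementary arc (a path of three edges); both are $1$-balls and $D_i\cap D_i^\ast=\partial D_i=\partial D_i^\ast$ consists of the two ``$+$'' vertices. In the $j$-th $S^0=\{+,-\}$ let $E_j=\{+\}$ and $E_j^\ast=\{-\}$; again $E_j\cap E_j^\ast=\partial E_j=\partial E_j^\ast=\emptyset$. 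A simplex ``has no $-1$'s in a coordinate $c$'' precisely when its restriction to $c$ is a face of the corresponding good part ($D_i$ or $E_j$), with a missed coordinate counting vacuously. The base cases $(k,l)=(1,0)$ and $(0,1)$ are immediate: $F_{1,0}=D_1\cong D^1$ and $F_{0,1}=E_1\cong D^0$.

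Inductive step. Peel off the last coordinate-sphere $S$: take an $S^1$ if $l=0$ (since $(1,0)$ is a base case we then have $k\ge 2$), and an $S^0$ if $l\ge 1$ (the excluded $(0,1)$ being a base case). Write $Y_{k,l}=Y'\ast S$ with $Y'$ the smaller ambient sphere, and let $F'$ denote $F_{k-1,l}$ or $F_{k,l-1}$, which by induction is a top-dimensional PL ball in $Y'$. With $S=D\cup D^\ast$ as above, the key claim is
\[
F_{k,l}=(Y'\ast D)\cup(F'\ast D^\ast)=:A\cup B.
\]
This is checked on simplices: a face of $Y_{k,l}$ lies in $F_{k,l}$ iff its restriction to $S$ is a face of $D$ (contributing $Y'\ast D$) or some old coordinate is already good, i.e.\ its restriction to $Y'$ lies in $F'$ (contributing $F'\ast S$); and $F'\ast S=(F'\ast D)\cup(F'\ast D^\ast)$ with $F'\ast D\subseteq Y'\ast D$. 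Since $Y'$ is a sphere and $D,D^\ast,F'$ are balls, $A=Y'\ast D$ and $B=F'\ast D^\ast$ are PL $(2k+l-1)$-balls, while $A\cap B=F'\ast(D\cap D^\ast)=F'\ast\partial D$ is a PL ball of dimension $2k+l-2$. The join boundary formulas give $A\cap B=F'\ast\partial D\subseteq Y'\ast\partial D=\partial(Y'\ast D)=\partial A$ and $A\cap B=F'\ast\partial D^\ast\subseteq\partial(F'\ast D^\ast)=\partial B$, so $A$ and $B$ meet in a common codimension-one face and $F_{k,l}=A\cup B$ is a PL $(2k+l-1)$-ball.

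I expect the main obstacle to be finding the right decomposition: the naive move of splitting off a single coordinate expresses $F_{k,l}$ as a union of two top-dimensional balls meeting in another top-dimensional piece, to which no ball-gluing lemma applies. The fix is to further cut the peeled-off sphere $S$ into its good part $D$ and the closure $D^\ast$ of its complement; because $D\cap D^\ast=\partial D$ drops a dimension, $A\cap B=F'\ast\partial D$ becomes a codimension-one face of both $A$ and $B$. Secondary points needing care are the simplex-level verification of $F_{k,l}=A\cup B$ (including the vacuous ``missed coordinate'' case in the definition of a good coordinate) and the degenerate instances where $D$ is a single vertex and $\partial D=\emptyset$, where the join boundary formulas must be read with $X\ast\emptyset=X$.
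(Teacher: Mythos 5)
Your proof is correct, but it takes a genuinely different route from the paper's. The paper proves the lemma in three steps: (i) it shows $F_{k,l}$ is a $(2k+l-1)$-manifold by checking that the link of each of the four types of vertex is a sphere or disk of the right dimension (invoking the induction hypothesis for two of the types), (ii) it exhibits an explicit acyclic matching on $F_{k,l}$ with a single critical cell, so $F_{k,l}$ is collapsible, and (iii) it concludes via Corollary~\ref{reg14} (a collapsible manifold is a disk). Your argument sidesteps both the manifold-recognition step and the discrete Morse machinery: peeling off the last coordinate-sphere $S$ and cutting it into the ``all $+$'' ball $D$ and its complementary ball $D^\ast$ expresses $F_{k,l}$ as the union of the two top-dimensional balls $A = Y'\ast D$ and $B = F'\ast D^\ast$, and their intersection $F'\ast\partial D$ is a codimension-one ball in $\partial A\cap\partial B$, so the classical PL ball-gluing lemma from \cite{RS82} finishes. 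This is a cleaner and more self-contained inductive argument, relying only on a standard fact about joins of balls and spheres; what the paper's approach buys is the explicit Morse matching and the vertex-link computations, which fit the overall theme of the paper (collapsibility and regular neighborhoods) but are not strictly needed here. One minor caveat: the parenthetical alternative you offer --- applying Corollary~\ref{reg14} with trivial group --- only checks links at interior points of $A\cap B$, which is not a complete verification that $A\cup B$ is a manifold; but this is moot since your main argument via the ball-gluing lemma does not rely on it.
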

\begin{proof} We proceed by induction on the dimension, $2k + l -
1$. In the initial case, $F_{0,1}$ has a single $S^0$ coordinate which must be $+1$, so it is a single point, i.e. a disk of dimension $0$. To prove
$F_{k,l}$ is a disk, we will show that it is a $(2k + l -
1)$--manifold, show it collapses to a vertex, and then apply
Corollary \ref{reg14}.  There are four
types of vertices whose links we need to consider:
  \begin{enumerate}
  \item $+1$ coming from one of the $k$ $S^1$ coordinates has as
  its link $F_{k-1,l+1}$, a $(2k+l-2)$--disk by induction.
  \item $-1$ coming from one of the $S^1$ coordinates has as its link
  $S^0 \ast F_{k-1,l}$, a $(2k+l-2)$--disk.
  \item $+1$ coming from one of the $l$ $\{\pm 1\}$ coordinates has
  as its link $\ast_{i=1}^{2k+l-1}S^0$, a $(2k+l-2)$--sphere.
  \item $-1$ coming from one of the $\{\pm 1\}$ coordinates has
  as its link $F_{k,l-1}$, a $(2k+l-2)$--disk.
  \end{enumerate}

  Now we will define a matching on $F_{k,l}$.  First, we order the
  coordinates.  In each $S^1$ coordinate, we also choose one of
  the two copies of $\{\pm 1\}$ to be distinguished.  Associate
  each simplex in $F_{k,l}$ with the simplex obtained by inserting or
  removing $+1$ to or from the first coordinate lacking a $-1$
  (in the distinguished copy of $\{\pm 1\}$ in the case the first
  such coordinate is $S^1$).  Doing this does not change which
  coordinate is the first without a $-1$, so the pairing is
  well-defined.  Every simplex is paired ($\emptyset$ is paired with the vertex
  with a $+1$ in the first coordinate and nothing in any other
  coordinate), so if there are no cycles in this matching, $F_{k,l}$.

  Suppose there were a cycle.  It would have to be of the form:
  \begin{equation*}
  \sigma_0 \lessdot \tau_0 \gtrdot \sigma_1 \lessdot \tau_1 \gtrdot \sigma_2
  \lessdot \ldots \lessdot \tau_{s-1} \gtrdot \sigma_s = \sigma_0
  \end{equation*}
  where each $\sigma_i$ is paired with $\tau_i$.  Also, for $1 \leq i \leq s$,
  $\sigma_i$ must be $\tau_{i-1}$ minus a vertex $v_i$.  Therefore,
  since this is a cycle, there must be a $j$ such that $\tau_j =
  \sigma_j \cup \{v_i\}$.  For this to be possible, $v_i$ must be a
  $+1$.  Thus, all of the simplices in the cycle must have all the
  same $-1$ coordinates, but if that is the case, the vertex to be added in any $\sigma_i \lessdot \tau_i$ pair
  is always the same, and $v_i$ must be the same for every $i$.  This is a
  contradiction.  Therefore, there are no cycles, and we have a Morse matching with
  a single critical simplex.
\end{proof}

In the proof of Proposition \ref{main4} below, we will use the
following lemma, which is essentially an equivariant version of
Theorem 3.1 of \cite{K06-2}.
\begin{lemma}\label{main3}
Let $G$ be any group and $P$ be a finite poset, $h\colon\thinspace P \rightarrow P$ an order-preserving
poset map such that  for any $x \in P$, $h(x) \geq x$ (or $h(x) \leq
x$). Define $Q$ to be the set of fixed points of $h$. Then $\Delta
P$ collapses simplicially to $\Delta Q$.  In the case that h is a G--poset map, P $\searrow_G$ Q.
\end{lemma}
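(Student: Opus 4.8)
The plan is to exhibit a canonical discrete Morse matching on $\Delta P$ whose set of critical simplices is exactly $\Delta Q$; Lemma~\ref{morse2} then gives $\Delta P \searrow \Delta Q$, and because the matching will be built only out of $h$ and the order relation, it will automatically be $G$--invariant when $h$ is a $G$--poset map, so the observation in Section~4 (a simplicial collapse of an admissible $G$--complex is a $G$--collapse exactly when its Morse matching is $G$--invariant, which applies since $\Delta P$ is admissible) upgrades it to $P \searrow_G Q$. The case $h(x)\le x$ reduces to the case $h(x)\ge x$ by replacing $P$ with its opposite poset, which has the same order complex, so I would assume $h(x)\ge x$ throughout. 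I would also first replace $h$ by $c:=h^{|P|}$: since each chain $x\le h(x)\le h^2(x)\le\cdots$ stabilizes, $c$ is an order--preserving idempotent with $c(x)\ge x$, it is a $G$--poset map whenever $h$ is, and it has the same fixed--point set $Q$ (if $c(x)=x$ then the chain from $x$ to $c(x)$ is constant, so $h(x)=x$). Thus I may take $c$ to be an ascending closure operator.

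Now the matching: a chain $\sigma$ of $\Delta P$ with $\sigma\subseteq Q$ is declared critical; otherwise let $x=x(\sigma)$ be the largest element of $\sigma$ with $c(x)\ne x$ and put $y=c(x)$. Then $y\in Q$, $y>x$, and $y\le x'$ for every $x'\in\sigma$ with $x'>x$ (such an $x'$ lies in $Q$, so $y=c(x)\le c(x')=x'$); hence $y$ is comparable to every element of $\sigma$, and if $y\notin\sigma$ it is the immediate successor of $x$ in $\sigma\cup\{y\}$. Match $\sigma$ with $\sigma\cup\{y\}$ when $y\notin\sigma$ and with $\sigma\setminus\{y\}$ when $y\in\sigma$ (in which case $y$ is forced to be the successor of $x$ in $\sigma$). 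This is an involution on matched simplices because inserting or deleting $y$ changes neither the distinguished element $x$ nor the associated $y=c(x)$, so reapplying the rule returns $\sigma$; and the critical simplices are precisely the chains contained in $Q$, which form the subcomplex $\Delta Q$.

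For acyclicity I would follow the distinguished element along a gradient path $\sigma_0\lessdot\tau_0\gtrdot\sigma_1\lessdot\tau_1\gtrdot\cdots$. Here $\tau_j=\sigma_j\cup\{y_j\}$ with $y_j=c(x(\sigma_j))\in Q$, and $\sigma_{j+1}=\tau_j\setminus\{z\}$ for some $z\in\sigma_j$ with $z\ne y_j$. If $z$ is fixed by $c$, or $z$ is non--fixed but $z\ne x(\sigma_j)$, then $x(\sigma_{j+1})=x(\sigma_j)$ and its image $y_j$ still lies in $\sigma_{j+1}$ just above it, so $\sigma_{j+1}$ is a \emph{head} of a vector and the path stops. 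The only way the path extends is $z=x(\sigma_j)$, and then every non--fixed element of $\sigma_{j+1}$ lies strictly below $x(\sigma_j)$, so $x(\sigma_{j+1})<x(\sigma_j)$ in $P$. Hence $x(\sigma_\bullet)$ strictly decreases whenever the path continues, and since $P$ is finite there are no closed paths; the matching is a Morse matching and Lemma~\ref{morse2} applies.

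Equivariance and the main obstacle: when $h$ is a $G$--poset map so is $c=h^{|P|}$, the set $Q$ is $G$--invariant, and $g\,x(\sigma)=x(g\sigma)$, $g\,c(x)=c(gx)$, so the matched pair of $g\sigma$ is $(g\sigma,g\tau)$ whenever that of $\sigma$ is $(\sigma,\tau)$; the matching is $G$--invariant, and the Section~4 criterion yields $\Delta P\searrow_G\Delta Q$, i.e.\ $P\searrow_G Q$. The delicate point is the design of the matching: it must be phrased purely in terms of $c$ and the partial order so that $G$--invariance is free, while being explicit enough that the involution property and the strict decrease of $x(\sigma)$ along gradient paths are both transparent. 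The ``largest non--fixed element of the chain'' rule is what reconciles these, and the head/tail case analysis in the acyclicity step is the one place that needs care.
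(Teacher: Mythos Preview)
Your proof is correct and follows essentially the same approach as the paper: both replace $h$ by an idempotent power (your $c=h^{|P|}$, the paper's $h^N$), pick out the largest non-fixed element $x$ of a chain, and match by inserting or deleting $c(x)\in Q$ just above $x$. The only cosmetic difference is in the acyclicity check: the paper argues by contradiction that in a cycle every removed vertex would have to lie in $Q$, forcing all simplices to share the same non-fixed part and hence the same $x$, while you observe directly that $x(\sigma_j)$ strictly decreases along any gradient path; these are two presentations of the same phenomenon.
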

\begin{proof} We prove it for the case that $h(x) \geq x$, the other case being almost identical.  Since
$P$ is finite, we may choose $N$ large enough so that for all $x$
in $P$, we have $h^N(x)\in Q$. Now let $\sigma \in \Delta P$ be a
chain $x_0 < x_1 < \ldots < x_m$. If $\exists \; i: 0 \leq i \leq
m$ such that $x_i \notin Q$, let $k$ be the largest such $i$. Then
we may insert $h^N(x_k)$ into the chain immediately following
$x_k$ because $x_k < h^N(x_k) \leq h^N(x_{k+1}) = x_{k+1}$ if $k <
m$. Associate to $\sigma$ the chain obtained by inserting
$h^N(x_k)$ or by deleting it in the case $h^N(x_k) = x_{k+1}$.
Since it is an element of $Q$ being inserted or deleted, the
selection of $k$ is not affected, and $x_k$ uniquely determines
the other chain in the pair. Therefore, this matching is
well-defined.  Also, this matching is equivariant if $h$ is a
$G$--map.

Suppose there is a cycle
\begin{equation*}
  \sigma_0 \lessdot \tau_0 \gtrdot \sigma_1 \lessdot \tau_1 \gtrdot \sigma_2
  \lessdot \ldots \lessdot \tau_{s-1} \gtrdot \sigma_s = \sigma_0
\end{equation*}
We have for $1 \leq i \leq s$ that $\sigma_i = \tau_{i-1}
\setminus \{y_i\}$ for some $y_i \in P$. Then there must be some
pair $\sigma_j \lessdot \tau_j = \sigma \cup \{y_i\}$, so $y_i \in
Q$ for all $i$.  Thus every simplex in the cycle has all the same
elements of $P \setminus Q$, so $\exists \; x \in P \setminus Q$
that is the greatest such element in every simplex.  Hence $\tau_j
= \sigma_j \cup \{h^N(x)\}$ for all $j$, and $y_i = h^N(x)$ for
all $i$.  This is a contradiction because the same element is
being added and deleted in consecutive steps.  Therefore, we have
a Morse matching whose critical simplices are exactly the elements
of $\Delta Q$, a subcomplex of $\Delta P$.  Thus, $\Delta P$
$G$--collapses to $\Delta Q$.
\end{proof}

\begin{prop}\label{main4}
$\Delta K$ simplicially G--collapses to $\Delta S$.
\end{prop}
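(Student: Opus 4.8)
The plan is to realize the collapse as a composite
$$\Delta K \;\searrow_G\; \Delta K_1 \;\searrow_G\; \Delta S ,$$
where
$$K_1 := \Big\{\, \phi = \left(\begin{array}{cc} A & B\\ C & D\end{array}\right) \in K \ \Big|\ A\cap D = \emptyset = B\cap C \,\Big\} .$$
Since the row swap interchanges $A\cap D$ with $C\cap B$, the pair of conditions cutting out $K_1$ is $G$--invariant, and $K_1$ is an order ideal of $K$ (the four blocks only shrink when passing to a face). Moreover $S \subseteq K_1$, because on $S$ one has $A\cap D = A\cap B = \emptyset = B\cap A = B\cap C$. Thus $\Delta S \subseteq \Delta K_1 \subseteq \Delta K$ are full subcomplexes and it is enough to produce the two collapses above.

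The second collapse is the easier half, and is where Lemma \ref{main3} is applied directly. On $K_1$ all four cross intersections $A\cap B,\ A\cap D,\ C\cap B,\ C\cap D$ vanish, hence $(A\cup C)\cap(B\cup D) = \emptyset$, so
$$h\colon K_1 \longrightarrow K_1, \qquad h\left(\begin{array}{cc} A & B\\ C & D\end{array}\right) := \left(\begin{array}{cc} A\cup C & B\cup D\\ A\cup C & B\cup D\end{array}\right)$$
is well defined: the target lies in $M$, it lies in $K$ because it has the same $B\cup D$, and it again satisfies the defining conditions of $K_1$. Built from unions, $h$ is order preserving with $h\geq\mathrm{id}$; it is symmetric in the two rows, hence $G$--equivariant; and $h(\phi)=\phi$ precisely when $A=C$ and $B=D$, i.e.\ precisely on $S$. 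Lemma \ref{main3} then gives $\Delta K_1 \searrow_G \Delta S$.

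The first collapse $\Delta K \searrow_G \Delta K_1$ is the substantial step, and I expect it to be the main obstacle. It cannot be deduced from Lemma \ref{main3} with $P = K$: there seems to be no nontrivial order-preserving $G$--map $h\colon K\to K$ with $h\geq\mathrm{id}$ or $h\leq\mathrm{id}$, essentially because any element lying outside one of the blocks can be pushed into that block inside a larger cell, which forces such an $h$ to be the identity. In particular the naive candidate $\left(\begin{array}{cc}A&B\\C&D\end{array}\right)\mapsto\left(\begin{array}{cc}A\setminus D&B\setminus C\\C\setminus B&D\setminus A\end{array}\right)$ fails twice over: it is not order preserving (an element of $A$ may migrate into block $D$ as the cell grows, so the first block of the image can shrink), and its blocks need not be nonempty (e.g.\ when $A\subseteq D$). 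I would instead build a $G$--invariant discrete Morse matching on $\Delta K$ directly, invoking Lemma \ref{morse2} together with the observation (made in Section 4) that a simplicial collapse with $G$--invariant Morse matching is a $G$--collapse: a chain not contained in $K_1$ is paired by a canonical insertion or deletion carried out at its top element $\phi$ with $A\cap D\neq\emptyset$ or $B\cap C\neq\emptyset$, the move being chosen symmetrically in the two rows so that $G$--invariance is automatic. Making this precise is where the real work lies --- the move must be well defined in spite of the nonemptiness of $A,B,C,D$ and the global constraint $B\cup D\neq\{1,\ldots,n\}$, the matching must be checked to be acyclic, and its critical chains must be identified with $\Delta K_1$ --- and these difficulties are sharpest along the boundary $\Delta L$ of the manifold $\Delta K$ (where $L = \{\phi\in K \mid A\cap C=\emptyset\}$), since such a $\phi$ has no diagonal cell below it. Composing the two collapses then proves the proposition.
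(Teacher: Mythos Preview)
Your second step is correct and clean: on your $K_1$ the map
\[
h\left(\begin{array}{cc} A & B \\ C & D \end{array}\right) = \left(\begin{array}{cc} A\cup C & B\cup D \\ A\cup C & B\cup D \end{array}\right)
\]
is a well-defined, order-preserving, $G$--equivariant self-map with $h\geq\mathrm{id}$ and fixed set $S$, so Lemma \ref{main3} applies exactly as you say.

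The gap is the first step. You have not constructed the matching for $\Delta K \searrow_G \Delta K_1$; you only describe what it should do (``a canonical insertion or deletion carried out at its top element~\ldots chosen symmetrically in the two rows'') and then list the obstacles without resolving any of them. This is not a detail: it is the entire content of the proposition. Your own diagnosis of the naive move --- that deleting $A\cap D$ from $A$ etc.\ is neither order preserving nor nonempty-preserving --- shows that whatever the right move is, it is not obvious from your setup. Since your $K_1$ is an order ideal, the bad elements of a chain sit at the top, but there is no evident $G$--symmetric element of $K$ to insert above or below $\phi_m$ that simultaneously respects the emptiness constraints on $A,B,C,D$ and the global constraint $B\cup D\neq\{1,\ldots,n\}$. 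As written, the proposal is a plan for a proof with its hard half missing.

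The paper's argument uses a different intermediate poset, and the choice is what makes the explicit matching possible. Its $K_1$ is the \emph{upper} set $\{\phi\in K \mid A\cap C\neq\emptyset\}$, not a lower set, and the Morse matching on $\Delta K$ is built as follows: on a chain $\phi_0<\cdots<\phi_m$ with $A_0\cap C_0=\emptyset$, locate the last index $k$ with $A_k\cap C_k=\emptyset$; if $B_k=B_m$ and $D_k=D_m$, insert or delete at the top the element obtained by throwing all of $(B_m\cup D_m)^c$ into both $A_m$ and $C_m$; otherwise locate the first $l>k$ where the second column changes and insert or delete $\left(\begin{smallmatrix} A_l & B_{l-1}\\ C_l & D_{l-1}\end{smallmatrix}\right)$ just before $\phi_l$. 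The point is that the $K$--condition $B\cup D\neq\{1,\ldots,n\}$ is exactly what guarantees $(B_m\cup D_m)^c\neq\emptyset$, so the added element has $A\cap C\neq\emptyset$ automatically; the emptiness and boundary difficulties you flag simply do not arise. Two further applications of Lemma \ref{main3} (first with $h_1(\phi)=\left(\begin{smallmatrix} A\cap C & B\\ A\cap C & D\end{smallmatrix}\right)$ to reach $\{A=C\}$, then with $h_2(\phi)=\left(\begin{smallmatrix} A & B\cup D\\ A & B\cup D\end{smallmatrix}\right)$ to reach $S$) finish the argument. If you want to salvage your decomposition, you would need a comparably explicit, $G$--invariant, acyclic matching on $\Delta K$ with critical set $\Delta K_1$; nothing in your proposal supplies one.
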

\begin{proof} The collapsing will occur in three steps. Define
\begin{equation*}
K_1 := \{\phi = \left(
   \begin{array}{ccc}
    A & B \\
    C & D \\
   \end{array}\right) \in K \quad | \quad A\cap C \neq \emptyset \}
\end{equation*}
\begin{equation*}
K_2 := \{\phi \in K_1 \quad | \quad A = C \}
\end{equation*}
 First, we collapse $\Delta K$ to $\Delta K_1$.  Let $\sigma$ be a chain of the form
\begin{equation*}
\phi_0 < \phi_1 < \ldots < \phi_{m-1} < \phi_m
\end{equation*}
 in $\Delta K$ where $\phi_i = \left(
   \begin{array}{ccc}
    A_i & B_i \\
    C_i & D_i \\
   \end{array}\right)$.  If $A_0 \cap C_0 = \emptyset$, we want to pair $\sigma$ with another chain
for which that is also true.  Find the last $k$ such that $A_k
\cap C_k = \emptyset$. Then compare $B_k$ and $D_k$ to $B_m$ and
$D_m$. If $B_k = B_m$ and $D_k = D_m$, pair $\sigma$ with the
chain obtained by adding to (or deleting from) the end of $\sigma$
the element $\left(
   \begin{array}{ccc}
    A_m\cup (B_m \cup D_m)^c & B_m \\
    C_m\cup (B_m \cup D_m)^c & D_m \\
   \end{array}\right)$.  Otherwise, find the first $l > k$ where $B_l
   \neq B_k$ or $D_l \neq D_k$.  Now pair $\sigma$ with the chain
   obtained by inserting (or removing if it equals $X_{l-1}$) $\left(
   \begin{array}{ccc}
    A_l & B_{l-1} \\
    C_l & D_{l-1} \\
   \end{array}\right)$ before $\phi_l$.
Nowhere are we inserting or deleting elements with $A\cap C =
\emptyset$, so the selection of $k$ is not affected.  In the
second case, we are inserting or deleting an element with $B_{l-1}
= B_k$ and $D_{l-1} = D_k$, so the selection of $l$ is not
affected.  Therefore, the matching is well-defined. The critical
simplices are exactly those where $A_0 \cap C_0 \neq \emptyset$,
forming $\Delta K_1$, a subcomplex. Therefore, if there are no
cycles, we have a collapsing from $\Delta K$ to $\Delta K_1$.
Also, the pairings are chosen equivariantly, so we will have a
$G$--collapse.

Suppose we have a cycle
\begin{equation*}
  \sigma_0 \lessdot \tau_0 \gtrdot \sigma_1 \lessdot \tau_1 \gtrdot \sigma_2
  \lessdot \ldots \lessdot \tau_{s-1} \gtrdot \sigma_s = \sigma_0
\end{equation*}
Again, for $1 \leq i \leq s$, $\sigma_i$ is obtained from $\tau_{i-1}$ by deleting an
element $\psi_i$, so there must be a pair $\sigma_j \lessdot \tau_j =
\sigma_j \cup \{\psi_i\}$ coming from our matching. Therefore, $\psi_i
\in K_1$ for all $i$, which means that all the simplices in our
cycle have all of the same elements with $A \cap C = \emptyset$.
Thus, they all have the same $\phi_k$, so $B_k$ and $D_k$ are fixed
and we know that every $\psi_i$ has them as its second column.  As a
result, the elements after $\phi_k$ that have $B \neq B_k$ or $D \neq
D_k$ are not changing as we move through the cycle, implying that
$\psi_i$ is the same for all $i$.  This is a contradiction, so our
matching has no cycles.  This proves that $\Delta K$ $G$--collapses to
$\Delta K_1$.

The next two collapsings are proved by Lemma \ref{main3}. For the
first, we define $h_1: K_1 \rightarrow K_1$ by $h_1(\phi) =
\left(\begin{array}{ccc}
    A\cap C & B \\
    A\cap C & D \\
   \end{array}\right)$.  This is an order-preserving $G$--poset map, and $h_1(\phi) \leq
   \phi$.  The fixed point set of $h_1$ is exactly $K_2$, so Lemma
   \ref{main3} implies that $\Delta K_1$ $G$--collapses to $\Delta
   K_2$.  For the second collapsing, we now define
   $h_2: K_2 \rightarrow K_2$ by $h_2(\phi) = \left(
   \begin{array}{ccc}
    A & B\cup D \\
    A & B\cup D \\
   \end{array}\right)$.  This is an order-preserving $G$--poset map, $h_2(\phi) \geq
   \phi$, and the fixed point set is $S$.  Therefore, the same lemma
   implies that $\Delta K_2$ $G$--collapses to $\Delta S$.  Hence,
   $\Delta K$ $G$--collapses to $\Delta S$.
\end{proof}

\begin{theorem}\label{main5}
$|\Delta K|$ is a $G$--regular neighborhood of $|\Delta S|$ with
boundary $|\Delta L|$.
\end{theorem}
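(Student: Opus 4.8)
The plan is to verify the two hypotheses of the Collapsing Criterion for $G$--regular neighborhoods (Theorem~\ref{reg16}), applied with ambient $G$--manifold $M = |\Delta M| = \Hom(P_4\setminus\{3\},K_n)$, which is $G$--homeomorphic to $S^{n-2}\times S^{n-2}$ with the coordinate--swap involution and is therefore a closed $(2n-4)$--dimensional PL $G$--manifold (so $\text{int}\,M = M$). The target polyhedron is $Y = |\Delta S|$, the diagonal copy of $\Hom(K_2,K_n)\cong S^{n-2}$.

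First I would record that $|\Delta K|$ is genuinely a $G$--neighborhood of $|\Delta S|$ in $M$. An element of $S$ satisfies $A=C$ while an element of $L$ satisfies $A\cap C=\emptyset$, which together force $A=\emptyset$; since $A\neq\emptyset$ always, $S\cap L=\emptyset$, and hence $|\Delta S|\cap|\Delta L| = |\Delta S\cap\Delta L| = \emptyset$. Because $\partial|\Delta K| = |\Delta L|$ by Proposition~\ref{main1}, this gives $|\Delta S|\subset\text{int}\,|\Delta K|$, so $|\Delta K|$ is a $G$--invariant closed polyhedral neighborhood of $|\Delta S|$.

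Condition (ii) of Theorem~\ref{reg16}, that $|\Delta K|\searrow_G|\Delta S|$, is exactly Proposition~\ref{main4}. For condition (i), Proposition~\ref{main1} already gives that $|\Delta K|$ is a $(2n-4)$--manifold with boundary $|\Delta L|$, of the same dimension as $M$, so the one remaining point is the $G$--bicollarability of $|\Delta L|$ in $M$. Using the remark following Theorem~\ref{regnbhd-bicollar}, it suffices to show $|\Delta L|$ is $G$--collarable both in $|\Delta K|$ and in $cl(M\setminus|\Delta K|)$. The first is immediate from Theorem~\ref{reg5}: $|\Delta L|$ is the boundary of the combinatorial $G$--manifold $|\Delta K|$, and such a boundary is locally $G$--collarable since near a boundary vertex $v$ the star $\st_{\Delta K}(v)$ is a $G_v$--cone on the $G_v$--hemisphere $\lnk_{\Delta K}(v)$. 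For the second I would show that $cl(M\setminus|\Delta K|)$ is again a $G$--manifold whose boundary is exactly $|\Delta L|$: at a vertex $v\in L^0$, $\lnk_{\Delta M}(v)$ is a $(2n-5)$--dimensional $G_v$--sphere (because $M$ is a $G$--manifold) containing the $G_v$--disk $\lnk_{\Delta K}(v)$ of Proposition~\ref{main1} as a codimension--zero subcomplex, so the closure of its complement in that sphere is again a $G_v$--disk (the equivariant form of Newman's theorem that the complement of a ball in a sphere is a ball), which exhibits $v$ as a boundary vertex of $cl(M\setminus|\Delta K|)$; at a vertex of $M$ not lying in $K^0$ the corresponding link is a full $G_v$--sphere. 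Then $cl(M\setminus|\Delta K|)$ carries its own boundary $G$--collar on $|\Delta L|$ by Theorem~\ref{reg5}, and gluing this collar to the one inside $|\Delta K|$ along $|\Delta L|$ produces a $G$--bicollar of $|\Delta L|$ in $M$.

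With (i) and (ii) established, Theorem~\ref{reg16} gives that $|\Delta K|$ is a $G$--regular neighborhood of $|\Delta S|$ in $M$, and its boundary is $|\Delta L|$ by Proposition~\ref{main1} (consistent with Proposition~\ref{reg8}). I expect the main obstacle to be precisely the $G$--bicollarability of $|\Delta L|$: the delicate part is to choose a triangulation of $M$ --- a derived $G$--subdivision near $\Delta L$ with $\Delta K$ full --- in which the closed complement $cl(M\setminus|\Delta K|)$ is visibly a simplicial $G$--manifold with boundary precisely $|\Delta L|$, and to carry out the equivariant complement--of--a--ball argument on the vertex links (the $G$--analogue of the link analysis used in Proposition~\ref{submanifold}).
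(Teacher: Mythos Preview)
Your overall strategy matches the paper's: apply the collapsing criterion (Theorem~\ref{reg16}) inside the closed $G$--manifold $|\Delta M|$, invoking Proposition~\ref{main1} for condition~(i) and Proposition~\ref{main4} for condition~(ii). The only substantive difference is how you establish the $G$--bicollarability of $|\Delta L|$ in $|\Delta M|$.

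The paper handles this in one line by observing that $G$ acts \emph{freely} outside $|\Delta S|$: the involution on $M$ swaps $(A,B)\leftrightarrow(C,D)$, so a fixed element of $M$ has $A=C$ and $B=D$, i.e.\ lies in $S$; by admissibility the fixed-point set of the action on $|\Delta M|$ is exactly $|\Delta S|$. Since you already checked $S\cap L=\emptyset$, the action is free on a neighborhood of $|\Delta L|$, and $G$--bicollarability there reduces to ordinary PL bicollarability of the boundary of a codimension--zero submanifold inside a closed manifold, which is standard. Your route---showing $cl(|\Delta M|\setminus|\Delta K|)$ is a $G$--manifold via an equivariant complement-of-a-ball argument on links---is workable but circuitous, and the ``equivariant Newman's theorem'' you flag as the delicate step is not actually needed: for every vertex $v\in L^0$ the stabilizer $G_v$ is trivial, so the link argument is purely non-equivariant. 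Recognizing the freeness of the action collapses your anticipated obstacle entirely and is worth adding to your write-up.
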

\begin{proof} $G$ acts freely outside of $|\Delta S|$, so $\partial |\Delta K|$ is $G$--bicollarable in $|\Delta M|$.
Now the theorem follows immediately from \ref{reg16} (the collapsing
criterion for $G$--regular neighborhoods) and Propositions
\ref{main1} and \ref{main4}.
\end{proof}

Now our main result follows easily:

\begin{maintheorem}
The regular cell complex $\Hom_{\{3\}}(P_4,K_n)$ is a PL manifold with boundary
$\Hom_{\{3\}}(C_5,K_n)$ and is equivariantly homeomorphic (with respect to the involution described above) to $N :=
\{(x,y) \in S^{n-2} \times S^{n-2} \; | \; x \cdot y \geq 0 \}$,
where the involution on $N$ interchanges $(x,y)$ with $(y,x)$.
The Stiefel manifold $V_{n-1,2} = \partial N$ is therefore equivariantly homeomorphic to $\Hom_{\{3\}}(C_5,K_n)$, which is equivariantly homeomorphic to $\Hom(C_5,K_n)$.
\end{maintheorem}
\begin{proof}
It follows from \ref{Gcellcx} that we have $\Hom_{\{3\}}(P_4,K^n)
\approx_G |\Delta K|$ with the subcomplex $\Hom_{\{3\}}(C_5,K^n)
\approx_G |\Delta L|$.  Because $|\Delta K|$ and $N$ are both
$G$--regular neighborhoods of the diagonal, they are equivariantly
homeomorphic by \ref{reg4}.
\end{proof}

The Stiefel manifold $V_{n-1,2}$ has a natural action of the
orthogonal group $O_2$ (with the Grassmannian as the quotient).
The equivariant homeomorphism above is with respect to a single
reflection of $O_2$.  The multimorphism complex $\Hom(C_5,K_n)$
does not have a combinatorial $O_2$ action; however, there is the
induced action of the dihedral group $D_5$ (a subgroup of $O_2$)
which is the group of symmetries of the cycle $C_5$.  It seems
natural to ask:
\begin{question}
Is $\Hom(C_5,K_n)$ equivariantly homeomorphic to $V_{n-1,2}$ with
respect to the action of the dihedral group $D_5$?
\end{question} Unfortunately, neither of the smaller models
$\Hom_{\{3\}}(C_5,K_n)$ or $\Hom_{\{2,4\}}(C_5,K_n)$ is
$D_5$--invariant, so it seems that one needs to work with
$\Hom(C_5,K_n)$ which does not have an obvious $D_5$--equivariant
embedding into $S^{n-2} \times S^{n-2}$.  Also, a good
(equivariant) combinatorial candidate for $N$ is missing, which is
the obstacle to applying the methodology above to answer this
question positively.





\bibliographystyle{model1b-num-names}







\end{document}